\documentclass[12pt]{amsart}

\usepackage{mathtools}
\usepackage{amsmath}
\usepackage{graphicx}
\usepackage{subfig}
\usepackage{tabularx}

\usepackage{mathrsfs}     
\usepackage{helvet}         
\usepackage{courier}        
\usepackage{type1cm}      
\usepackage{color}
\usepackage{todonotes}
\usepackage{mathabx,mathbbol}
\usepackage{natbib}
\usepackage{hyperref}
\usepackage{cleveref}

\usepackage{geometry,calc,color}
\usepackage{amsmath,amssymb}

\usepackage{enumerate}
\usepackage{esint}

\usepackage{booktabs}
\usepackage{multirow}

\renewcommand{\theta}{\vartheta}
\renewcommand{\phi}{\varphi}
\renewcommand{\O}{\Omega}

\newcommand{\Tstarh}{\widehat T_h}

\newcommand{\dt}{\partial_t}
\newcommand{\p}{\varphi}
\newcommand{\n}{\mathbf{n}}

\newcommand{\dert}{\partial_t}

\newcommand{\Ku}{ {K_u}}
\newcommand{\Kp}{{K_\p}}
\newcommand{\gRu}{g^R_u}
\newcommand{\gRp}{g^R_\p}

\newcommand{\Au}{\mathcal{A}_u}
\newcommand{\Ap}{\mathcal{A}_{\p}}
\newcommand{\C}{\mathcal{C}}
\newcommand{\F}{\mathcal{F}}

\newcommand{\Vh}{\mathbb{V}_h}
\newcommand{\Sh}{\mathbb{S}_h}

\newcommand{\PiLdue}{{\mathbb{\Pi}^0}}
\newcommand{\Cinftyu}{C^\infty_u}

\newcommand{\vareps}{\varepsilon}

\newcommand{\bs}[1]{\boldsymbol{#1}}

\newcommand{\fp}{\bs{\psi}}
\newcommand{\fu}{{\bs{j}}}
\newcommand{\hfp}{\widehat{\bs{\psi}}}
\newcommand{\hfu}{\widehat{\bs{j}}}
\newcommand{\ft}{\bs{\tau}}
\newcommand{\fv}{\bs{q}}
\newcommand{\fw}{\bs{h}}
\newcommand{\hu}{\widehat u}
\newcommand{\hp}{\widehat {\p}}
\renewcommand{\t}{\theta}
\newcommand{\ct}{\widehat\theta}
\newcommand{\hw}{\widehat w}
\newcommand{\GN}{\Gamma_N}
\newcommand{\GR}{\Gamma_R}

\newcommand{\tu}{{\gamma_u}}
\newcommand{\tp}{{\gamma_\phi}}

\newcommand{\Th}{{\mathcal{T}_h}}
\newcommand{\Eh}{{{\mathcal{E}_h}}}
\newcommand{\EhK}{{{\mathcal{E}^K_h}}}

\newcommand{\Hdiv}[1]{H(\mathrm{div},#1)}
\newcommand{\Poly}[1]{\mathbb{P}_{#1}}
\newcommand{\PiHDG}{\mathbb{\Pi}^\mathrm{dg}}

\newcommand{\ve}{v^e}
\newcommand{\fve}{\boldsymbol{q}^e}
\newcommand{\hve}{\widehat{v}^e}
\newcommand{\Ve}{V^e}

\newcommand{\vpi}{v^\pi}
\newcommand{\fvpi}{\boldsymbol{q}^\pi}
\newcommand{\hvpi}{\widehat{v}^\pi}
\newcommand{\Vpi}{V^\pi}

\newcommand{\imu}{\alpha}
\newcommand{\inu}{\beta}

\newcommand{\bK}{{\partial K}}

\newcommand{\V}{\mathbb{V}}

\newcommand{\he}{\widehat e}

\newcommand{\uI}{u^I_h}

\newcommand{\fuI}{\fu_h^I}
\newcommand{\huI}{\hu_h^I}
\newcommand{\fuIh}{\fu_h^I}
\newcommand{\huIh}{\hu_h^I}

\newcommand{\fe}{\boldsymbol d}

\newcommand{\heta}{\widehat \delta}
\newcommand{\feta}{\boldsymbol \eta}

\newcommand{\hzeta}{\widehat \xi}
\newcommand{\fzeta}{\boldsymbol \zeta}

\newcommand{\Zeta}{\Xi}

\renewcommand{\he}{\widehat e}
\newcommand{\myeta}{\delta}

\newcommand{\Tstarhn}{\widehat T_{h_n}}

\newcommand{\uIh}{{\uI}}
\newcommand{\UIh}{{ U^I_h}}

\newcommand{\HDGA}{$\mathrm{HDG}(k-1,k)$}
\newcommand{\HDGk}{$\mathrm{HDG}(k,k)$}

\newcommand{\HDGAzu}{$\text{HDG}(0,1)$}
\newcommand{\HDGAud}{$\text{HDG}(1,2)$}
\newcommand{\HDGku}{$\text{HDG}(1,1)$}
\newcommand{\HDGkd}{$\text{HDG}(2,2)$}

\definecolor{NewText}{RGB}{140,0,90}

\newcommand{\epsB}{\varepsilon_B}
\newcommand{\sbar}{{\widetilde s}}
\newcommand{\sigbar}{{\widetilde{\jsmooth}}}
\newcommand{\rbar}{{\widetilde{\psmooth}}}
\newcommand{\Rrate}{\mathcal R_{\kj,k}}
\newcommand{\Srate}{\mathcal S_{\kj,k}}

\newcommand{\gNu}{g^N_u}
\newcommand{\gNp}{g^N_\p}

\newcommand{\ConstPsi}{K_{\fp}}

\newcommand{\PiO}[1][\Th]{\Pi^0_{#1}}               
\newcommand{\fPiO}[1][\Th]{\boldsymbol\Pi^0_{#1}}   
\newcommand{\PiE}[1]{\Pi^\partial_{#1}}              
\newcommand{\PiHs}[1]{\Pi^{#1}}                      
\newcommand{\PiHv}[1]{\boldsymbol\Pi^{#1}}           

\newcommand{\hs}{\widehat s }

\renewcommand{\PiHDG}{\Pi_{\mathrm{HDG}}}

\newcommand{\Cpsi}{c_{\fp}}

\newcommand{\Tstar}{T'}
\newcommand{\HTh}[1]{{#1}}
\newtheorem{theorem}{Theorem}[section]
\newtheorem{corollary}[theorem]{Corollary}
\newtheorem{lemma}[theorem]{Lemma}
\newtheorem{proposition}[theorem]{Proposition}
\theoremstyle{definition}

\newtheorem{problem}[theorem]{Problem}
\newtheorem{assumption}[theorem]{Assumption}

\theoremstyle{remark}
\newtheorem{remark}[theorem]{Remark}

\numberwithin{equation}{section}

\providecommand{\EHB}[1]{#1} 
\providecommand{\PiO}[1][\Th]{\Pi^0_{#1}}\providecommand{\fPiO}[1][\Th]{\boldsymbol\Pi^0_{#1}}
\providecommand{\PiE}[1]{\Pi^\partial_{#1}}\providecommand{\PiHs}[1]{\Pi^{#1}}\providecommand{\PiHv}[1]{\boldsymbol\Pi^{#1}}%
\providecommand{\HTh}[1]{#1}

\newcommand{\fpiu}{\fu^\pi}
\newcommand{\upi}{u^\pi}
\newcommand{\hupi}{\hu^\pi}

\newcommand{\ctsens}{\widetilde\chi\,}
\newcommand{\jsmooth}{\sigma}
\newcommand{\psmooth}{r}

\begin{document}
	
	\allowdisplaybreaks
	
	\addtolength{\jot}{1.5mm}
	\abovedisplayskip=7pt
	\belowdisplayskip=7pt
	
	\title[HDG for the Keller--Segel model]{Hybridizable Discontinuous Galerking methods for the Keller--Segel model} %

\author{S. Bertoluzza}
\address[Silvia Bertoluzza]{Istituto di Matematica Applicata e Tecnologie Informatiche "E. Magenes", CNR, Pavia, Italy}\email{silvia.bertoluzza@imati.cnr.it}

\author{G. Bretti}
\address[Gabriella Bretti]{Istituto per le Applicazioni del Calcolo "M. Picone", CNR, Rome, Italy}
\email{g.bretti@iac.cnr.it}

\author{M. Pennacchio}
\address[Micol Pennacchio]{Istituto di Matematica Applicata e Tecnologie Informatiche, "E. Magenes", CNR, Pavia, Italy}\email{micol.pennacchio@imati.cnr.it}

\author{C. Prud'homme}
\address[Christophe Prud’homme]{Universit\'e  de Strasbourg, CNRS, IRMA UMR 7501, Cemosis, Strasbourg, France}
\email{christophe.prudhomme@cemosis.fr}
	

	
	\thanks{This work was supported by the Italian Ministry of Research, under the complementary actions to the NRRP ``D34Health - Digital Driven Diagnostics, prognostics and therapeutics for sustainable Health care'' Grant \#PNC0000001. As part of the “France 2030” initiative, this work has benefited from a national grant managed by the French National Research Agency (Agence Nationale de la Recherche), attributed to the ExaMA project of the NumPEx PEPR program under reference ANR-22-EXNU-0002.
		MP has been partially supported by ICSC-Centro Nazionale di Ricerca in High Performance Computing, Big Data, and Quantum Computing funded by the European Union--NextGenerationEU.
		SB, GB and MP are members of the Istituto Nazionale di Alta Matematica - Gruppo Nazionale di Calcolo Scientifico.}

\begin{abstract}
		We propose a hybridizable discontinuous Galerkin (HDG) finite element method to approximate the solution of the time-dependent Keller--Segel model, a drift--diffusion system describing chemotactic cell movement. We develop and analyze the corresponding HDG scheme, and establish theoretical convergence results for the proposed discretization.  Finally, we present numerical experiments that confirm the theoretical findings.
	\end{abstract}

	\maketitle

	
	\section{Introduction}

	Chemotaxis, i.e. directed cell migration in response to chemical signals coming from external sources or secreted by the organisms themselves, plays a key role in a number of biological processes, from bacterial movement, to inflammation, to cancer development, to embryology. 
	The taxis can be either positive (attractive) or negative (repulsive) due to the presence of cells/chemicals or nutrient depletion.
	The dynamics of many amoeboid cells, including {\em Physarum polycephalum}, {\em Dictyostelium discoideum} or leukocytes and some eukaryotic cells, shows that chemotactic movement replaces random and undirected exploration of the environment in the presence of chemical gradients: amoeba gathering to aggregate around food sources \cite{nature, WILLARD}, bacteria alternating phases of free swimming and tumbling in order to reorientate according to nutrient or even toxic external substances \cite{berg,bacteria}.
	The physiological mechanism in human body's immune response to wounds or infection is regulated by chemotaxis as well. Indeed, chemokines in the inflammatory phase are responsible for the recruitment of macrophages and the promotion of angiogenesis \cite{mantovani,wound}.
	In the brain, chemotaxis plays a fundamental role in the neural migration for cortical development \cite{brain}. The process of demyelination is regulated by chemotaxis, see \cite{CALVEZ} and references therein.
	Regarding cancer, one of the most interesting examples of chemotaxis involves immune cells that, besides other functions crucial for the organism's defense, have the role of detecting cancer cells and fighting them \cite{roussos}. The immunosurveillance in cancer inflammation involves complex relationships between cells and chemotactic factors secreted in the tumor microenvironment, as well as the interactions among cells, see \cite{roselli} and references therein.
	
	The present paper focuses on the classical Keller--Segel (KS) model \cite{KS}, proposed in 1970 and still considered as a fundamental model, thanks  to its basic formulation of biological processes. In spite of this apparent simplicity, such a model is able to replicate key behavior of chemosensitive populations, such as directed movement of individuals and pattern formation. The evolution of the density of individuals is described by a parabolic equation, and the concentration of chemoattractant is generally given by an elliptic/parabolic equation. Since in this model the organisms secrete chemical substances and move towards areas of higher chemical concentration, this leads to high aggregation of individuals, expressed mathematically as a blow up of their density. The high nonlinearity due to the chemotactic term makes the KS system largely intractable by analytical methods. Moreover, the model may develop two kinds of solutions: unbounded in finite time (blow up) or bounded with peaks in small areas of the domain \cite{sleeman}. Many analytical studies on blow-up in KS equations are described in \cite{dejak} and references therein. 
	Here, the behavior of solutions of the KS model is studied and analyzed in the bounded regime since the biological application we have in mind has a slightly different formulation where blow-up does not occur, see \cite{BBN2021}.
	Numerical tests in the blow-up regime are also provided in order to show the capability of the method to handle such a challenging case.

	The KS model is hard to solve numerically due to the singular structure of solutions and to high sensitivity to changes in model parameters or in the step sizes of the numerical scheme. 
	For the reasons described above, it is fundamental to have
	 efficient numerical methods at our disposal, able to produce a realistic approximation of the observed
	phenomenon in order to describe correctly the evolution of concentration profiles. In particular, a numerical scheme for KS needs to reproduce all the relevant physical features, like the nonnegativity of the discrete approximation of
	the solution as well as the conservation of total mass in time in the case of no-flux boundary conditions. Moreover, since the density may grow fast in very small regions as a result of directed movement, the numerical approximation must reproduce this behavior and avoid non-physical oscillations and numerical instabilities.
	
	Different numerical methods have been proposed for tackling such an equation 
	\cite{epshteyn_discontinuous_2009,Kurganov, HASSAN, Bonilla}.  In \cite{epshteyn_discontinuous_2009} the discontinuous Galerkin method is shown to be well suited to tackle the numerical simulation of such a mechanism. The efficiency of such a method is however hindered by the proliferation of degrees of freedom. The hybridizable discontinuous Galerkin (HDG) method was developed with the goal of overcoming such a drawback \cite{cockburn_unified_2009}. We devote this paper to the analysis of such a method when applied to the Keller--Segel equation. More precisely, we consider the following system of equations:
	\begin{gather}\label{KS1}
		\dert u - \nabla\cdot(\mu \nabla u - \chi u \nabla\p) = f_u,\\ \label{KS2}
		\dert \p - \nabla\cdot \nu \nabla \p + a \p - b u = f_{\p}.
	\end{gather}
	In the above equations, $ \mu, \nu, a, b$ and $\chi$ are given constants, with  $ \mu, \nu$ and $a, b$ positive. 
	Here $u$ is the cell density, $\p$ the chemoattractant concentration and $\chi$ a chemical sensitivity constant.

	The HDG formulation we consider to discretize \eqref{KS1}--\eqref{KS2} is similar, though not  identical, to the one proposed in \cite{chen2019hdg} for drift--diffusion models in semiconductor devices. The main difference between the latter and the KS system considered here lies, on the one hand, in the presence of the time derivative in equation \eqref{KS2}, and on the other in the boundary conditions which, in view of an application to the simulation of Cancer-on-chip devices, we take here to be mixed Neumann--Robin, in order to handle the membrane type condition that can be used to model the junction between channel and chambers. As far as the discretization is concerned, the  mixed formulation that we adopt, relies on a definition of the flux variable $\fu$, associated with the cellular density, different from the corresponding one in \cite{chen2019hdg}. For the chemoattractant $\phi$ and the related flux and trace, as well as for the cellular density $u$, we will consider order $k$ discretizations. For the cellular flux, we will instead consider order $k-1$ discretisation, as suggested by \cite{chen2019hdg} but also order $k$ discretization. For the first choice, we will prove convergence of order $k+1$ (or $k+1-\varepsilon$, in the mixed Neumann--Robin boundary conditions case) for al variables except the cellular flux, for which we will prove convergence of order $k$ (or $k-\varepsilon$). For the second choice, thanks to our definition choice for the cellular variable, we will prove order $k+1$ convergence for all variables.  
	
	The paper is organized as follows. In Section \ref{sec:mixed}, we derive the mixed formulation of system \eqref{KS1}--\eqref{KS2} and introduce the two additional flux unknowns $\fu$ and $\fp$. This serves as a preliminary step for the hybridized discontinuous Galerkin discretization presented in Section \ref{sec:HDG}.
	In the same section, we state the main convergence result, while the proof is deferred to Section \ref{sec:proof}, with additional technical details collected in the Appendix. Numerical results validating the theory are reported in Section \ref{sec:numerical}.

	\section{Mixed form of the Keller--Segel equations}\label{sec:mixed}
Let $\Omega \subset \mathbb{R}^2$ denote a convex polygonal domain.
	In the following we will consider the  system \eqref{KS1}--\eqref{KS2} in a time interval $[0,T]$, with $T > 0$, endowed  with initial conditions 
	\begin{equation}
		\label{initials} u(0,\cdot) = u_0, \qquad \p(0,\cdot) = \p_0 \qquad \text{ in }\O, 
	\end{equation}
	and with mixed Neumann--Robin conditions: 
	\begin{gather}
		\nu \nabla \p \cdot \n = {\gNp}, \qquad  \qquad (\mu\nabla u - \chi u \nabla \p)\cdot \n = {\gNu}, \qquad  \text{ on }\GN,\label{NeumannBCstrong}\\
		\nu \nabla \p \cdot \n + \frac 1 {\varepsilon_\p} \p = \gRp, \qquad (\mu\nabla u - \chi u \nabla \p)\cdot \n  + \frac 1 {\varepsilon_u} u = \gRu,  \qquad  \text{ on }\GR,\label{RobinBCstrong}
	\end{gather}
	where $\GN$ and $\GR$  respectively denote the Neumann and Robin portions of the boundary. We assume that $\GN$ and $\GR$ satisfy $\GN\cap \GR=\emptyset$ and $\partial\Omega=\overline\Gamma_N\cup\overline\Gamma_R$, and that they are either empty, or unions of whole edges of the polygon (we allow for aligned edges, i.e. edges forming a $\pi$ angle, to account for switches of boundary condition type within a physical edge).  Here  $\varepsilon_\p$ and $\varepsilon_u$ are positive constants. {The boundary data $\gNu$, $\gNp$ and $\gRu$, $\gRp$ are given functions on $\GN$ and on $\GR$, respectively; for $\gNu=\gNp=0$ one recovers the usual no-flux condition on $\GN$.}
	
	\

In order to write down the hybridized discontinuous Galerkin discretization of  \eqref{KS1}--\eqref{KS2}, we write the system in mixed form.
To this aim we  introduce the two new unknowns $\fu$ and $\fp$ defined by the equations
\begin{gather}
	\label{fluxp}	\fp + \nu  \nabla \p = 0,\\
	\label{fluxu}	\fu + \mu \nabla u - \chi   u \nabla \p = 0.
\end{gather}
We let
\[\alpha = \mu^{-1}, \qquad \beta = \nu^{-1} \qquad \text { and }\qquad \ctsens = \chi/\mu,\] whence
equations  \eqref{KS1}--\eqref{KS2} and \eqref{fluxp}--\eqref{fluxu} can be rewritten as 
\begin{gather}\label{scaled_KS}
	\partial_t  u + \nabla \cdot \fu = f_u,\\
	\partial_t  \phi + \nabla \cdot \fp + a \p - b u= f_\phi,\\
	\inu \fp +  \nabla \p = 0,\\
	\imu \fu + \nabla u +  \inu \ctsens  u \fp = 0.
\end{gather}
The Neumann and Robin boundary conditions respectively rewrite as
\begin{gather*}
	{- \fp \cdot \n = \gNp}, \quad \text{ on }   \GN, \qquad {- \fu \cdot \n = \gNu} \quad \text{ on } \GN, \\
-	\fp \cdot \n + \frac 1 {\varepsilon_\p} \p = \gRp, \qquad - \fu\cdot \n  + \frac 1 {\varepsilon_u} u = \gRu,  \qquad  \text{ on }\GR.
\end{gather*}

\begin{remark}
	While for the sake of notational simplicity we did not consider the situation where on a part of the boundary the two unknowns $u$ and $\phi$ satisfy boundary conditions of different type, the 
	method that we will consider  adapts to such a situation in a straightforward manner.
\end{remark}

\section{Hybridized discontinuous Galerkin discretization}\label{sec:HDG}

\subsection{Notation}
We let $\Th$ denote a quasi-uniform shape-regular triangulation of the domain $\Omega$, with mesh size $h$, and we let $\Eh$ denote both the skeleton $\Eh = \cup_K \partial K$ and the set of edges of $\Th$ (which of the two definitions applies will be clear from the context).  Moreover, for $K\in \Th$ we let $\EhK \subset \Eh$ denote the set of edges of $K$. For $u \in L^2(\O)$ and $\fu \in (L^2(\O))^2$ we let $u^K = u|_K$ and $\fu^K = \fu|_K$. 

We let
\begin{gather}
	\HTh{H^1(\Th)} = \{u \in L^2(\O)\ \text{such that}\ \nabla u^K \in (L^2(K))^2\ \text{ for all } K \in \Th \},\\
	\Hdiv{\Th} = \{\fu \in( L^2(\O))^2 \ \text{such that}\ \nabla \cdot \fu^K \in L^2(K)\ \text{ for all } K \in \Th \}.
\end{gather} 
Using the notation $(\,\cdot\,)_{K\in\Th}$ to indicate vectors with index in $\Th$, by abuse of notation, for $u \in \HTh{H^1(\Th)}$, $\fu \in \Hdiv{\Th}$ we let
\begin{gather}
	\nabla u = (\nabla u^K)_{K\in\Th} \in \prod_K (L^2(K))^2,\\
	\nabla \cdot \fu = (\nabla \cdot \fu^K)_{K\in\Th}  \in \prod_K L^2(K), \\ \fu\cdot \n = (\fu^K \cdot \n^K)_{K\in\Th}  \in \prod_K H ^{-1/2}(\partial K)\end{gather} 
respectively denote the piecewise gradient of $u$ and the piecewise divergence and the piecewise normal trace of $\fu$.
We will use the following notation for the $L^2$ scalar products
\[
(u,v)_\O = \int_\Omega u v, \qquad (u^K,v^K)_K = \int_K u^K v^K.
\]
We also introduce the following compact notation for  broken scalar products: for $A, B$  in $\prod_K L^2(K)$ and $C, D$ in $\prod_K L^2(\partial K)$, 
\[ 
(A , B)_{\Th} = \sum_{K} \int_K A^K B^K, \qquad (C, D)_{\Eh} = \sum_K \int_{\partial K} C^K D^K
\]
with corresponding broken norms
\[
\| A \|_{\Th}^2 = (A , A)_{\Th}, \qquad \| C \|_{\Eh}^2 = (C , C)_{\Eh}.
\]
Combining classical trace inequalities in each element, we obtain the following standard result\footnote{
We will use the notation $A \lesssim B$ (resp. $A \gtrsim B$) to indicate that the quantity $A$ is less or equal (resp. greater or equal) than a constant times the quantity $B$ with the constant independent of $h$ and $t$, but possibly depending on $k$, on the shape regularity of the mesh, on the different parameters of the model, and on other quantities that we will indicate if need be. $A \simeq B$ will stand for $A \lesssim B \lesssim A$.}:
\begin{equation}\label{trace}
\| w \|_{\Eh}^2 \lesssim h^{-1}\| w  \|^2_\O + h\| \nabla  w \|_{\Th}^2, \qquad
\| \fw\cdot \n \|_{\Eh}^2 \lesssim h^{-1}\| \fw  \|^2_\O + h\| \nabla  \fw \|_{\Th}^2.
\end{equation}
A more general trace inequality also holds: $u \in \HTh{H^s(\Th)}$ $1/2 < s \le 1$ implies
\begin{equation}\label{trace_s}
	\| u \|^2_{\Eh} \lesssim h^{-1} \| u \|_{\O}^2 + \| u \|_{\O}^{2-1/s} \| u \|_{\HTh{H^{s}(\Th)}}^{1/s}.
\end{equation}

We observe that, with our notation, for all $u \in \HTh{H^1(\Th)}$, $\fu \in \Hdiv{\Th}$ the result of an element by element integration by parts can be written in compact form as
\[
(u,\nabla\cdot \fu)_{\Th} = -(\nabla u,\fu)_{\Th} + (u ,\fu\cdot \n)_{\Eh}.
\]

We will denote by $\Poly{k}(\Th)$ and $\Poly{k}(\Eh)$, respectively, the spaces of discontinuous piecewise polynomials of degree at most $k$  on the mesh $\Th$, and the space of single-valued, discontinuous piecewise polynomials of degree at most $k$ on the skeleton $\Eh$
\begin{gather*}
	\Poly{k}(\Th) = \{ p \in L^2(\O): p|_K \in \Poly{k} \ \text{ for all }K \in \Th\},\\
	\Poly{k}(\Eh) = \{ p \in L^2(\Eh): p|_e \in \Poly{k} \ \text{ for all }e \in \Eh\},\\
		\Poly{k}(\EhK) = \{ p \in L^2(\partial K): p|_e \in \Poly{k} \ \text{ for all }e \in \EhK\}.
\end{gather*}
Finally, we recall that, for all $\fv \in (\Poly{k}(\Th))^2$ we have the following inverse inequality
\begin{equation}\label{inverse}
	h \| \fv \cdot \n \|_{\Eh}^2 \lesssim \| \fv \|^2_{\O}.
\end{equation}

\smallskip

\newcommand{\kj}{{k'}}

\subsection{HDG formulation of the KS equation}\label{sec:HDG_KS}
In order to write down the HDG formulation of the Keller--Segel system, we introduce the traces of $u$ and $\p$ on the skeleton $\Eh$ as additional independent unknowns, which we denote by $\hu$ and $\hp$.

\smallskip

Substantially following \cite{chen2019hdg}, we choose discretizations of order $k \geq 1$ for all the unknowns with the exception of $\fu$  and of the trace $\hu$ of $u$ on $\Eh$, for which we select an order $\kj$ approximation, with $\kj \in \{k-1,k\}$.
As customary in deriving the HDG formulation,
we multiply the flux equations \eqref{fluxu}  and \eqref{fluxp} by test functions $\fv \in (\Poly{\kj}(\Th))^2$ and $\ft \in (\Poly{k}(\Th))^2$ respectively, and, replacing $u$ and $\p$ on $\partial K$ by  $\hu = u|_{\Eh}$ and $\hp=\p|_{\Eh}$, we obtain the identities, 
\begin{equation}\label{physequ}
	0 = (\imu \fu +  \inu \ctsens u \fp ,\fv)_{\Omega} + (\nabla u,\fv)_{\Th}
	=  (\imu \fu + \inu \ctsens u \fp ,\fv)_{\Th}
	- (u ,
	\nabla \cdot \fv)_{\Th} +  (\hu , \fv\cdot \n)_{\Eh}
\end{equation}
and
\begin{equation}\label{physeqp}
	0 = (\inu \fp,\ft)_{\Omega} +  (\nabla \p,\ft)_{\Th}
	=   (\inu \fp,\ft)_{\Th}
	- (\p ,
	\nabla \cdot \ft)_{\Th} + (\hp , \ft\cdot \n)_{\Eh}.
\end{equation}

Now testing equation \eqref{KS2} with $\t \in \Poly{k}(\Th)$ yields:
\begin{multline}
	(f_\p,\t)_\Omega = (
	\dert \p + \nabla\cdot \fp + a \p - b u,\t
	)_{\Th} \\
	=  (
	\dert \p,\t)_{\Omega} + (\nabla\cdot  \fp,\t)_\Th + (a \p,\t)_\Omega - (b u,\t)_{\Omega}
	\\
	=	(\dert
	\p,\t)_\Omega  - (  \fp,\nabla\t)_{\Th} + ( \fp\cdot\n, \t)_{\Eh}+ (a \p,\t)_\Omega - (b u,\t)_\O,
\end{multline}
while equation \eqref{KS1}, tested with $w \in \Poly{k}(\Th)$, becomes
\begin{gather*}
	(f_u,w)_\Omega =  (\dert u,w)_{\Omega} + 
	(\nabla \cdot \fu ,w)_{\Th} =
	(\dert u,w)_{\Omega} 
	-	(\fu ,\nabla w)_{\Th} + (\fu \cdot \n, w)_{\Eh}.
\end{gather*}
Finally, continuity of the fluxes combined with the boundary conditions implies that 
\begin{gather*}
-  ( \fp \cdot \n,\ct)_{\Eh} + \Kp(\hp,\ct)_{\GR} = (\gRp,\ct)_{\GR} {+ (\gNp,\ct)_{\GN}},\\
 - ( \fu  \cdot \n, \hw)_{\Eh} + \Ku(\hu,\hw)_{\GR} = (\gRu,\hw)_{\GR} {+ (\gNu,\hw)_{\GN}},
\end{gather*}
where 
\[\Kp = \varepsilon_\p ^{-1},\qquad \Ku = \varepsilon_u^{-1},\]
(the minus sign in the flux continuity equations is convenient for analysis, and of course it can be disregarded for implementation).

\newcommand{\HDGU}{HDG($\kj$,$k$)}

\subsection{The HDG scheme}
\label{sec:HDGU-scheme}
Our definition of the cellular flux $\fu$ in \eqref{fluxu} differs from the formulation presented in \cite{chen2019hdg}, where the diffusive flux is introduced separately, while the drift term remains in the conservation equation. 
This formulation allows us to consider two different HDG schemes 
that differ in the polynomial space considered for $\mathbf{j}$ and in the stabilization for the numerical flux. 
We use the notation $\mathrm{HDG}(\kj,k)$ 
where the first index is the polynomial
degree of the cellular flux $\fu_h$ and of the trace $\hu_h$, whereas the second is the common degree
of $u_h$ and of the whole chemoattractant block $\Phi_h$. 
Thus the
two methods considered in this paper are \HDGA\ and \HDGk.

\

For $\kj\in \{k-1,k\}$ we introduce the following compound discrete spaces:
\begin{equation}\label{discrete_spaces}
	\Vh = \Poly{k}(\Th) \times \Poly{\kj}(\Eh) \times (\Poly{\kj}(\Th))^2, \qquad \Sh = \Poly{k}(\Th) \times \Poly{k}(\Eh) \times (\Poly{k}(\Th))^2.
\end{equation}
Following \cite{chen2019hdg} as well as  \cite{oikawa2015hybridized,lehrenfeld2016high}  we  define numerical fluxes on the skeleton of the mesh $\Th$ as
\begin{equation}\label{numflux}
	\hfu^K \cdot \n^K = \fu^K \cdot \n^K + \tu \, h^{-\kappa} \PiE{\kj} (u^K - \hu), \qquad \hfp^K \cdot \n^K = \fp^K \cdot \n^K + \tp\, (\p^K - \hp),
\end{equation}
where $\kappa = k-\kj$, $\tu$ and $\tp$ are suitable positive constants, and where, for $m \geq 0$, $\PiE{m}: \prod_{K\in \Th} L^2(\partial K) \to  \prod_{K\in \Th} \Poly{m}(\EhK)$ denotes the $L^2$ projection onto piecewise order $m$ polynomials. If $v \in L^2(\Eh)$ is single valued, then $\PiE{m} v \in \Poly{m}(\Eh)$, and $\PiE{m}$ coincides with the $L^2(\Eh)$ projection onto $\Poly{m}(\Eh)$, which, by abuse of notation, we also denote by $\PiE{m}$.

We then consider the following global discrete problem.
\begin{problem}[The \HDGU\ semidiscrete problem]
	\label{problem:HDGA}
	Find $U_h = (u_h, \hu_h, \fu_h) \in \Vh$, $\Phi_h =(\p_h,\hp_h,\fp_h) \in \Sh$
	such that for all $W = (w,\hw,\fw) \in \Vh$, $\Theta = (\t,\ct,\ft) \in \Sh$ it holds that
	\begin{gather}
		\label{KS-HDG2}	\dert (
		\p_h,\t)_\Omega +	( \hfp_h\cdot\n, \t)_{\Eh}  -	(  \fp_h,\nabla\t)_{\Th}   + (a \p_h,\t)_\Omega- (b u_h,\t)_\Omega
		= 	(f_\p,\t)_\Omega, \\ 
		\label{KS-HDG1}
		(\dert u_h,w)_{\Omega} +  (\hfu_h \cdot \n, w)_{\Eh} - 
		(\fu_h ,\nabla w)_{\Th} = 	(f_u,w)_\Omega ,
		\\ 
		\label{KS-HDG4}		  (\inu \fp_h,\ft)_{\Omega}
		- (\p_h ,
		\nabla \cdot \ft)_{\Th} + (\hp_h , \ft\cdot \n)_{\Eh}
		= 0,\\ 
		\label{KS-HDG3}	(\imu \fu_h + \inu \ctsens u_h \fp_h ,\fw)_{\Omega}
		- (u_h ,
		\nabla \cdot \fw)_{\Th} +  (\hu_h , \fw\cdot \n)_{\Eh}
		=0,\\ 
		\label{KS-HDG5}-  ( \hfp_h \cdot \n,\ct)_{\Eh} + \Kp(\hp_h,\ct)_{\GR} = (\gRp,\ct)_{\GR} {+ (\gNp,\ct)_{\GN}}, \\
		\label{KS-HDG6} - ( \hfu_h  \cdot \n, \hw)_{\Eh} + \Ku(\hu_h,\hw)_{\GR} = (\gRu,\hw)_{\GR} {+ (\gNu,\hw)_{\GN}},
	\end{gather}
	and such that the following initial conditions hold
	\begin{equation}
		\label{initials-HDG} (u_h(0,\cdot),p)_\O = (u_0,p)_\O, \qquad
	(\p_h(0,\cdot),q)_\O = (\p_0,q)_\O, 
		\qquad \forall p,q \in \Poly k(\Th).
	\end{equation}
\end{problem}

The following proposition follows from standard arguments (see, e.g., \cite{cockburn_unified_2009} for the unique solvability of HDG local problems, as well as \cite{thomee2006galerkin} for the reduction of semidiscrete Galerkin schemes to systems of ODEs).
\begin{proposition}[Local well-posedness of the semidiscrete problem]\label{prop:local-existence}
	Let $\kj\in\{k-1,k\}$ and $\tu,\tp>0$, and assume that
	\begin{gather*}
	u_0,\p_0\in L^2(\O),\qquad f_u,f_\p\in C^0([0,T];L^2(\O)),\\
	 \gRu,\gRp\in C^0([0,T];L^2(\GR)),\qquad \gNu,\gNp\in C^0([0,T];L^2(\GN)).
	\end{gather*}
	Then Problem~\ref{problem:HDGA} admits a unique maximal solution $(U_h,\Phi_h)$, defined on a maximal interval $J_h\subseteq [0,T]$ and satisfying
	\begin{gather*}
	u_h,\p_h\in C^1(J_h;\Poly{k}(\Th)),\\
	(\hu_h,\fu_h)\in C^0\bigl(J_h;\Poly{\kj}(\Eh)\times(\Poly{\kj}(\Th))^2\bigr),\qquad (\hp_h,\fp_h)\in C^0\bigl(J_h;\Poly{k}(\Eh)\times(\Poly{k}(\Th))^2\bigr),
	\end{gather*}
	 with either $J_h = [0,T]$ or $J_h = [0,T_h)$ for some $T_h > 0$ and 
	\begin{equation}\label{eq:blowup-alternative}
	\limsup_{t\to T_h^-} \|u_h(t)\|_{L^\infty(\O)} = \infty.
	\end{equation}
\end{proposition}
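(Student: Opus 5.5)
The plan is to reduce Problem~\ref{problem:HDGA} to a finite-dimensional ODE system for the pair $(u_h,\p_h)\in\Poly{k}(\Th)\times\Poly{k}(\Th)$ and then apply Picard--Lindelöf together with the standard continuation argument.

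\emph{Step 1: the $\Phi_h$ block is an affine map of $(\p_h,u_h)$.} Fix $t$ and regard $\p_h$ as given. Equations \eqref{KS-HDG4} and \eqref{KS-HDG5} then form a linear system for $(\hp_h,\fp_h)$. Its right-hand side depends affinely on $\p_h$ and on the data $\gRp(t),\gNp(t)$.

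I will show that this system is uniquely solvable by checking that the homogeneous problem ($\p_h=0$, zero data) has only the trivial solution. Take $\ft=\fp_h$ in \eqref{KS-HDG4} and $\ct=\hp_h$ in \eqref{KS-HDG5}, and add. Using the definition \eqref{numflux} and integrating by parts gives
\[
(\inu\fp_h,\fp_h)_\O+\tp\|\hp_h\|_{\Eh}^2+\Kp\|\hp_h\|^2_{\GR}=0 .
\]
Hence $\fp_h=0$ and $\hp_h=0$. This is the classical local-solver argument of \cite{cockburn_unified_2009}.

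Consequently $(\hp_h,\fp_h)=L_\p\p_h+G_\p(t)$, where $L_\p$ is linear and $G_\p$ is continuous in $t$ by the assumption on the data.

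\emph{Step 2: the $U_h$ block is affine in $u_h$, with coefficients depending on $\fp_h$.} Given $u_h$ and $\fp_h$, equations \eqref{KS-HDG3} and \eqref{KS-HDG6} are linear in $(\hu_h,\fu_h)$. The drift term $(\inu\ctsens u_h\fp_h,\fw)_\O$ only enters the right-hand side, since it involves $u_h$ and not the unknowns.

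The same energy test, with $\fw=\fu_h$ and $\hw=\hu_h$, yields
\[
\imu\|\fu_h\|^2+\tu h^{-\kappa}\|\PiE{\kj}\hu_h\|_{\Eh}^2+\Ku\|\hu_h\|^2_{\GR}=0 .
\]
Since $\hu_h\in\Poly{\kj}(\Eh)$, we have $\PiE{\kj}\hu_h=\hu_h$, so both unknowns vanish and the system is uniquely solvable.

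The solution has the form $(\hu_h,\fu_h)=L_u\bigl(u_h,\,u_h\fp_h\bigr)+G_u(t)$. This is bilinear in $(u_h,\fp_h)$ and therefore a smooth function of $(u_h,\p_h)$.

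\emph{Step 3: the ODE and local existence.} Substitute the results of Steps 1--2 into \eqref{KS-HDG2} and \eqref{KS-HDG1}. Since the mass matrix on $\Poly k(\Th)$ is invertible, this gives
\[
\tfrac{d}{dt}(u_h,\p_h)=F\bigl(t,u_h,\p_h\bigr),
\]
where $F$ is continuous in $t$, polynomial (at most quadratic) in the state, and hence locally Lipschitz. The initial data are the $L^2$ projections in \eqref{initials-HDG}.

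Picard--Lindelöf then gives a unique maximal $C^1$ solution, as in \cite{thomee2006galerkin}. The regularity of the trace and flux variables follows from the continuity of the affine maps in Steps 1--2.

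\emph{Step 4: the blow-up alternative, which is the main obstacle.} Standard ODE theory only yields $\limsup_{t\to T_h^-}\bigl(\|u_h\|+\|\p_h\|\bigr)=\infty$. To sharpen this to \eqref{eq:blowup-alternative}, I will argue by contradiction: suppose $\|u_h\|_{L^\infty}\le M$ on $[0,T_h)$.

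The $\p_h$ equation is then linear, with bounded forcing $bu_h+f_\p$. Test with $\Theta=\Phi_h$ and use the Step 1 energy identity. This gives
\[
\tfrac12\tfrac{d}{dt}\|\p_h\|^2\le \|bu_h+f_\p\|\,\|\p_h\|+\text{data terms},
\]
and Gronwall bounds $\p_h$ on $[0,T_h)$.

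All norms on the finite-dimensional space are equivalent, so $u_h$ is bounded as well. The trajectory therefore stays in a compact set, contradicting maximality when $T_h\le T$. The only care needed is in handling the boundary-data terms through Young's inequality; being in finite dimensions makes the $h$-dependence of the constants harmless.
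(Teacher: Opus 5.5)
Your proposal is correct and takes the route the paper indicates. The paper only cites standard arguments: unique solvability of the HDG sub-problems and reduction of the semidiscrete scheme to an ODE system. Your Steps 1--3 carry this out explicitly, using energy identities for the $(\hp_h,\fp_h)$ and $(\hu_h,\fu_h)$ blocks and Picard--Lindel\"of for $(u_h,\p_h)$. Your Step 4 adds a detail the paper leaves implicit: because the $\Phi_h$ block is linear in $u_h$, the generic ODE blow-up alternative sharpens to one on $\|u_h\|_{L^\infty(\O)}$.
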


We make the following assumptions on the solution $u, \p$ to \eqref{KS1}--\eqref{KS2}.
\begin{assumption}\label{ass:smoothness}
	We have that
	\begin{gather}
		\label{smoothnessu}		u \in H^1(0,T;H^s(\O)), \qquad 2 < s,\\
		\label{smoothnessj}		\fu \in H^1\bigl(0,T;(H^\jsmooth(\O))^2\bigr), \qquad 1 < \jsmooth,\\
		\label{smoothnessp}		\p \in W^{1,\infty}(0,T;H^\psmooth(\O)), \qquad 2 < \psmooth .
	\end{gather}
	\end{assumption}

	\begin{remark}
	We point out that, provided
		$\fp\in W^{1,\infty}(0,T;W^{s-1,\infty}(\O)^2)$,
		\eqref{smoothnessu} and the definition \eqref{fluxu} imply
		\eqref{smoothnessj} with $\jsmooth=s-1$.  In two space dimensions, by the Sobolev embedding theorem, a
		sufficient condition for the required regularity of $\fp=-\nu\nabla\p$ is
		$\psmooth>s + 1$.
	\end{remark}

\

The following theorem holds.

\begin{theorem}[Convergence of \HDGU]\label{thm:KSerror}
	Let $\kj\in\{k-1,k\}$ and $\kappa=k-\kj$, and set
	\begin{equation}\label{eq:def_capped_indexes}
	\sbar = \min\{s,k+1\}, \qquad \sigbar = \min\{\jsmooth,\kj+1\}, \qquad \rbar = \min\{\psmooth,k+2\}.
	\end{equation}%
	Set
	\[
	\epsB=
	\begin{cases}
		\varepsilon, & \text{if } \kj=k-1 \text{ and both } |\GN|>0 \text{ and } |\GR|>0,\\
		0, & \text{otherwise},
	\end{cases}
	\]
	with $	0<\varepsilon< 1/2$ arbitrary but fixed.
	Assume \eqref{smoothnessu}, \eqref{smoothnessj}, \eqref{smoothnessp}. If $k=1$ and $\kj=0$, assume in addition that $\jsmooth \ge 3/2-{\epsB}$.
	Let $\tu\simeq\tp\simeq1$.  Then there exists $h^*>0$ such that, for all
	$h\leq h^*$,
	\begin{gather}
		\label{main3}\| \p-\p_h  \|_{L^\infty(0,T;L^2(\O))}
		+ \| \fp - \fp_h  \|_{L^2(0,T;L^2(\O))}
		\lesssim \Rrate(h),\\
		\label{main1}\| u - u_h \|_{L^\infty(0,T;L^2(\O))} \lesssim \Rrate(h),\\
		\label{main2}\| \fu - \fu_h \|_{L^2(0,T;L^2(\O))} \lesssim \Srate(h),\\
		\label{main1-jump-phi}	h^{1/2}	 \| \p_h - \hp_h  \|_{L^2(0,T;L^2(\Eh))} \lesssim \Rrate(h),\\
		\label{main1-jump}h^{1/2-\kappa} \| \PiE{\kj}(u_h - \hu_h) \|_{L^2(0,T;L^2(\Eh))} \lesssim \Srate(h),
	\end{gather}
	with
	\begin{gather}
		\label{def:Rkk}\Rrate(h):=
		h^{\sbar-\epsB}\|u\|_{H^1(0,T;H^s(\O))}
		+h^{\sigbar+\kappa-\epsB}\|\fu\|_{H^1(0,T;H^\jsmooth(\O))}
		+h^{\rbar-1}\|\p\|_{H^1(0,T;H^{\psmooth}(\O))},\\
		\label{def:Skk}\Srate(h):=
		h^{\sbar-\kappa}\|u\|_{H^1(0,T;H^s(\O))}
		+h^{\sigbar}\|\fu\|_{H^1(0,T;H^\jsmooth(\O))}
		+h^{\rbar-1}\|\p\|_{H^1(0,T;H^{\psmooth}(\O))}.
	\end{gather}
	In the mixed Neumann--Robin case with $\kj=k-1$, the hidden constant may depend on
	$\varepsilon$; the hidden constants may depend on the data and on
	$\|\fp\|_{W^{1,\infty}(0,T;L^\infty(\O))}$ and
	$\|u\|_{L^\infty(0,T;L^\infty(\O))}$, but are independent of $h$.
\end{theorem}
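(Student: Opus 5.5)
The plan is the standard HDG error analysis for semilinear parabolic systems: split the error through an HDG projection, control the discretization part by an energy argument, and close with a continuity (bootstrap) argument that also handles the nonlinear drift term.

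\emph{Projections and error equations.} I would introduce, for each time $t$, HDG-type projections of the exact triples. For $(\p,\hp,\fp)$ I use the Cockburn--Gopalakrishnan--Sayas projection $\PiHDG$ onto $\Sh$ with stabilization $\tp$. For the order-$\kj$ flux $\fu$ I use the $L^2$ projection $\fPiO$. For $u$ I use the $L^2$ projection onto $\Poly{k}(\Th)$, and for $\hu$ I use $\PiE{\kj}u$. The choice of $\fPiO$ for $\fu$ is the Lehrenfeld--Schöberl choice, which is why the stabilization carries $h^{-\kappa}\PiE{\kj}$ when $\kj=k-1$. I then split each error as $e=(\text{exact}-\Pi)+(\Pi-\text{discrete})=\delta+\xi$. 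Standard approximation estimates bound $\delta$ by the $h$-powers appearing in $\Rrate$ and $\Srate$. Subtracting Problem~\ref{problem:HDGA} from the consistent identities \eqref{physequ}, \eqref{physeqp} and the weak balance laws gives error equations for $\xi$. The right-hand sides contain three kinds of term: $\partial_t\delta$; boundary residuals involving $\delta$ in the stabilization and Robin parts; and the nonlinear residual $\inu\ctsens(u\fp-u_h\fp_h,\fw)$.

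\emph{Energy estimates.} I test the $\p$-block with $(\xi_\p,\xi_{\hp},\xi_{\fp})$ and the $u$-block with $(\xi_u,\xi_{\hu},\xi_{\fu})$, then add. The antisymmetric terms cancel, which leaves
\[
\tfrac12\tfrac{d}{dt}\bigl(\|\xi_u\|^2+\|\xi_\p\|^2\bigr)+\imu\|\xi_{\fu}\|^2+\inu\|\xi_{\fp}\|^2+\tu h^{-\kappa}\|\PiE{\kj}(\xi_u-\xi_{\hu})\|_{\Eh}^2+\tp\|\xi_\p-\xi_{\hp}\|_{\Eh}^2+\text{Robin terms}
\]
on the left. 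The coupling $b(u_h,\t)$ is linear and is absorbed by Young's inequality. For the drift I write
\[
u\fp-u_h\fp_h=(u-u_h)\fp+u_h(\fp-\fp_h)
\]
and use $\|\fp\|_{L^\infty}$ on the first piece. Because only $L^2$ control of $\fp-\fp_h$ is available, the second piece needs $\|u_h\|_{L^\infty}$ bounded. This is where the bootstrap enters.

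\emph{Bootstrap and Grönwall.} Assume that on $[0,t^*]$ one has $\|u_h\|_{L^\infty}\le 2\|u\|_{L^\infty(L^\infty)}+1$. Grönwall's inequality then yields $\|\xi\|\lesssim\Rrate(h)$. The inverse inequality gives
\[
\|\xi_u\|_{L^\infty}\lesssim h^{-1}\Rrate(h)\to0,
\]
using $\sbar>1$ and $\rbar>2$. With the $L^\infty$ stability of the $L^2$ projection, this shows the bound is strict for $h\le h^*$. Hence $t^*=T$, and Proposition~\ref{prop:local-existence} rules out blow-up. The case $k=1$, $\kj=0$ is where $h^{-1}\Rrate$ may fail to be small; I expect the hypothesis $\jsmooth\ge 3/2-\epsB$ to be used exactly there. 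The estimates for the flux \eqref{main2} and the jump \eqref{main1-jump} follow from the dissipative terms integrated in time. Those terms are only $\Srate$-accurate, because the $\delta$-residual on the boundary, $h^{-\kappa}\|\PiE{\kj}\delta_u\|_{\Eh}$, loses $h^\kappa$.

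\emph{Main obstacle.} Obtaining the full rate $\Rrate$ for $\|\xi_u\|$ when $\kj=k-1$ requires a duality (Aubin--Nitsche-type) argument for the elliptic HDG operator, i.e.\ an elliptic-projection-type term. This relies on elliptic regularity of the adjoint problem in the convex polygon. With mixed Neumann--Robin conditions, that regularity only gives $H^{3/2+1/2-\varepsilon}$ at boundary-type switch points, hence the loss $\epsB$. I would treat this first. I would define an HDG elliptic projection of $(u,\fu)$ with the frozen drift $\fp$, prove its $L^2$ superconvergence by duality using the $H^{2-\epsB}$ regularity and the trace inequality \eqref{trace_s}, and then rerun the energy argument with this projection in place of the $L^2$ one.
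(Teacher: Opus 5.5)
There is a genuine gap, in the case $\kj=k$. Your overall architecture does match the paper's: HDG projection for $\Phi$, an elliptic projection of the cellular triple with $\fp$ frozen, duality with $H^{2-\varepsilon}$ adjoint regularity for $\kj=k-1$, an energy estimate up to the bootstrap time, Gr\"onwall, and continuation. The trouble is that for $\kj=k$ you keep the $L^2$ projections $(\PiO u,\PiE{k}u,\fPiO\fu)$ and assert that the only loss is the factor $h^{\kappa}$, hence none when $\kappa=0$.

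That assertion is false. For $\kj=k$, the consistency term $\Au(U-U^\pi,\cdot)$ tested with $(\xi_u,\xi_{\hu},\xi_{\fu})$ reduces to the skeleton residual
\[
\bigl((\fu-\fPiO\fu)\cdot\n+\tu(u-\PiO u),\,\xi_u-\xi_{\hu}\bigr)_{\Eh}.
\]
This term does not vanish, because volume $L^2$ projections have no orthogonality on $\partial K$. Its size is $h^{k+1/2}\|\xi_u-\xi_{\hu}\|_{\Eh}$, and the jump is controlled only with the weight $\tu\simeq1$. Absorbing it therefore leaves an $O(h^{2k+1})$ source. You obtain only $\|\fu-\fu_h\|_{L^2(0,T;L^2(\O))}\lesssim h^{k+1/2}$, and the same rate for $\|u-u_h\|$ unless you add duality, instead of $\Srate(h)=O(h^{k+1})$.

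Switching to your elliptic projection for $\kj=k$ does not cure this. Its energy error, estimated through $L^2$ projections, contains the same residual. Duality restores the $L^2$ rate of $u$ but not that of $\fu$. So \eqref{main2} for $\kj=k$, which is exactly the paper's claimed improvement, is not reached.

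The missing idea is to compare, for $\kj=k$, with the Cockburn--Gopalakrishnan--Sayas projection $\PiHDG^{\tu}$ of $(u,\hu,\fu)$. Condition \eqref{proj3} with $\gamma=\tu$ is built precisely to make the combination above orthogonal to $\Poly{k}(\EhK)$. Because the drift has been placed inside $\fu$, $\Au$ is a pure diffusion HDG form, and the drift enters only through the volume term $\C(u,\fp;\fw)$. This gives the exact identity \eqref{cancellation}, $\Au(U-\PiHDG^{\tu}(U),W)=(\imu(\fu-\PiHv{\tu}(u,\fu)),\fw)_{\O}$. The elliptic-projection bounds are then optimal by energy alone, and no duality is needed in that case.

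Two smaller points. First, the hypothesis $\jsmooth\ge 3/2-\epsB$ (for $k=1$, $\kj=0$) plays no role in the bootstrap. Indeed $h^{-1}\Rrate(h)\to0$ regardless, since $\sigbar+\kappa-\epsB=2-\epsB$ and $\sbar-\epsB=2-\epsB$. The hypothesis is needed in the duality argument. There the residual $((\fu-\fPiO\fu)\cdot\n,\ve-\hvpi)_{\Eh}$ collapses to a term on $\partial\O$ and must be $O(h^{2-\epsB})\|u-\uIh\|_{\O}$, which forces $\jsmooth-1/2\ge 1-\epsB$. Second, $\UIh$ freezes $\fp(t)$, so $\dt\UIh$ is not the elliptic projection of $\dt U$. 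Bounding $(\dt(u-\uIh),\cdot)$ in the energy argument requires differentiating \eqref{pb:elliptic} in time. This produces the extra term $\C(u-\uIh,\dt\fp;\fw)$, which is why $\|\fp\|_{W^{1,\infty}(0,T;L^\infty(\O))}$ enters the constants.
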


\

\begin{table}[htpb]
{\centering
		\begin{tabular}{c|cc|cc} &
	\multicolumn{2}{c}{Pure Robin or Neumann b.c.} & \multicolumn{2}{c}{Mixed Neumann--Robin b.c.} \\
 & $\kj=k-1$ & $\kj = k $ & $\kj = k-1$ & $\kj = k$	\\
	\midrule
$\mathcal{R}_{\kj,k}$ & $h^{k+1}$ & $h^{k+1}$ & $h^{k+1-\varepsilon}$ & $h^{k+1}$	\\	
$\mathcal{S}_{\kj,k}$ & $h^k$ & $h^{k+1}$ & $h^{k}$ & $h^{k+1}$	\\
	\hline
\end{tabular}
\caption{
Convergence order attained by the two schemes under sufficiently high smoothness of the continuous solution. 
}
}
\label{table:HDG}
\end{table}

\section{Proof of Theorem \ref{thm:KSerror}}\label{sec:proof}

In this section, we present the proof of Theorem \ref{thm:KSerror}. 
The error estimate mostly follows, 
step by step, the proof of the analogous result in \cite{chen2019hdg}, with some differences due to the presence of the time derivative in equation \eqref{KS2} and the mixed Neumann--Robin boundary conditions. In addition, with respect to such a result (that only considers the choice $k'=k-1$), the different definition of the flux unknown $\fu$ allows us to obtain an optimal result also for $k'=k$. 
The proof is rather technical, and therefore we report it in full in order to clearly highlight how these differences affect the argument and ultimately modify the final result. In particular, the suboptimality, for $k'= k-1$, of the bounds \eqref{main1} and \eqref{main3} stems from the presence of mixed boundary conditions. We  remark that the estimates we derive are not robust with respect to the parameters of the equation (in particular $\mu$ and $\nu$), as the constants implicit in the bound of Theorem \ref{thm:KSerror} generally depend on all the parameters involved, as well as on the $L^\infty$ norm of $\nabla \p$. In the proof we keep at least partial track of the influence of these parameters, as we believe that this may provide the reader with useful information.

\smallskip

\subsection*{Scheme of the proof.} 
To facilitate following the proof, let us start by giving an idea of its overall structure. As in \cite{chen2019hdg}, the proof of Theorem \ref{thm:KSerror} consists of several steps.
\begin{enumerate}
\item The first step will consist in introducing different intermediate approximations to the true solution of the system \eqref{KS1}--\eqref{KS2} at a fixed time $t$, with the related error bounds. 
As far as $\Phi = (\p,\hp,\fp)$ is concerned, we will consider the projection, which depends on the stabilization parameter, specifically proposed in \cite{cockburn2010hybridizable} for the analysis of HDG methods.   As far as $U = (u,\hu,\fu)$ is concerned, we will consider its $L^2$ projection $\PiLdue(u,\hu,\fu) \in \Vh$ (for $k'=k-1$), and, once again, the stabilization-parametric HDG projection (for $k'=k$). 
For $U$ we will also introduce a suitably defined ``elliptic projection'' $\UIh = (\uIh,\huIh, \fuIh) \in \Vh$, for which suitable approximation bounds will be proven. We remark that this is the only step where the proofs for $k'=k-1$ and $k'=k$ diverge.

\item The second step consists of analyzing the error of the $\Phi$ component by looking at the equations \eqref{KS-HDG2}, \eqref{KS-HDG4} and \eqref{KS-HDG5}, while considering $u$ and $u_h$ as given parameters. This yields an error bound for $\Phi$ in terms of the mesh size parameter and of $u-u_h$. 

\item Letting $\Tstarh$ denote the largest time $T' \leq T$ such that 	\begin{equation*}	\| u_h \|_{L^\infty(0,T';L^\infty(\O))} \leq  \Cinftyu\quad \text{ with } \quad\Cinftyu =	\| u \|_{L^\infty(0,T;L^\infty(\O))} + 2\end{equation*}
(for $h$ sufficiently small, $\Tstarh$ can be seen to be strictly positive), we prove that for all $t<\Tstarh$ it holds that	\[
\| u_h(t) - \uIh(t) \|_{\O}^2  \lesssim \Rrate(h)^2 + \int_0^{t } \| u_h(s) - \uIh(s) \|_{L^2(\O)}^2 \,ds 
\]
which, by Gronwall's Lemma, yields
\[	\| u_h - \uIh \|_{L^\infty(0,\Tstarh;L^2(\O))}^2 \lesssim \Rrate(h)^2.\]
\item To conclude, we will show that for $h$ sufficiently small, it holds that $\Tstarh = T$.
\end{enumerate}

\

We start by observing that, under our assumptions, Sobolev's embedding theorems imply that 
	\begin{equation}\label{eq:Linty-regularity}
	u\in L^\infty(0,T;L^\infty(\O)),
	\qquad
	\fp\in W^{1,\infty}(0,T;L^\infty(\O)^2).
	\end{equation}

It will be convenient, in the following to rewrite  \eqref{KS-HDG2}--\eqref{KS-HDG6} in compact form. To this aim we introduce bilinear forms  $\Au, \Ap: \V \times \V \to \mathbb{R}$, where $\V = \HTh{H^1(\Th)} \times L^2(\Eh) \times (\HTh{H^1(\Th)})^2$, defined, for $U = (u,\hu,\fu)$, $W = (w,\hw,\fw)$, $\Phi = (\p,\hp,\fp)$ and $\Theta = (\t,\ct,\ft)$ as
\begin{multline}\label{A.1}
\Au(U,W) = 
(\imu \fu ,\fw)_{\Omega}
+	  (\hfu \cdot \n, w)_{\Eh} - 
(\fu ,\nabla w)_{\Th} 	\\
- (u ,
\nabla \cdot \fw)_{\Th}+  (\hu , \fw\cdot \n)_{\Eh}-( \hfu  \cdot \n, \hw)_{\Eh} + \Ku(\hu,\hw)_{\GR},
\end{multline}
\begin{multline}
\Ap(\Phi,\Theta)  = (a \p,\t)_\Omega  +  (\inu \fp,\ft)_{\Omega} +
(\hfp\cdot\n, \t)_{\Eh}  -	(  \fp,\nabla\t)_{\Th}  	\\ - (\p ,
\nabla \cdot \ft)_{\Th}+ (\hp , \ft\cdot \n)_{\Eh} -  ( \hfp \cdot \n,\ct)_{\Eh} + \Kp (\hp,\ct)_{\GR},
\end{multline}
as well as the trilinear form  $\C(\cdot,\fp;\cdot): L^2(\O) \times L^\infty(\Th)^2 \times L^2(\O)^2 \to \mathbb{R}$ and the linear form $\F: \V \times \V \to \mathbb{R}$, respectively defined as
\begin{align}
\label{C}
&\C (u,\fp;\fw) = ( \inu \ctsens u \fp ,\fw)_{\Th}, \\
& \F(W,\Theta) = 	(f_u,w)_\Omega  + 	(f_\p,\t)_\Omega + (\gRu,\hw)_{\GR} + (\gRp,\ct)_{\GR} {+ (\gNu,\hw)_{\GN} + (\gNp,\ct)_{\GN}}.
\end{align}

\

\noindent Then, the HDG discretization of the Keller--Segel system rewrites as follows.
\begin{problem}\label{KS-HDG-compact} 
Find $U_h = (u_h, \hu_h, \fu_h) \in \Vh$, $\Phi_h =(\p_h,\hp_h,\fp_h) \in \Sh$
such that for all $W = (w,\hw,\fw) \in \Vh$, $\Theta = (\t,\ct,\ft) \in \Sh$ it holds that
\begin{multline}
(\dert u_h,w)_{\Omega} +
\dert (
\p_h,\t)_\Omega 
+ \Au(U_h,W) + \Ap(\Phi_h,\Theta) - (b u_h,\t)_\Omega + \C(u_h,\fp_h;\fw) \\= \F(W,\Theta).
\end{multline}
\end{problem}

\subsection{Preliminary bounds}

In order to analyze the method we introduce discrete norms and seminorms, defined as follows
\begin{gather}\label{A.5}
| U |_{\Au}^2 = \Au(U,U),\qquad 
\| \Phi \|_{\Ap}^2 =  
\Ap(\Phi,\Phi).
\end{gather}

By elementwise integration by parts, the following proposition is also not difficult to prove.
\begin{proposition}
It holds that
\begin{gather}
\label{A.11}	| U |_{\Au}^2 = \imu \| \fu \|_{\Omega}^2 + \tu h^{-\kappa} \|\PiE{k'} (u -\hu)  \|_{\Eh}^2 + \Ku \| \hu \|_{\GR}^2,\\
\| \Phi \|_{\Ap}^2 =  \inu \| \fp \|_{\Omega}^2 + \tp \| \p - \hp \|_{\Eh}^2  + \Kp \| \hp \|_{\GR}^2 + a \| \p \|^2_{\O}.
\end{gather}
\end{proposition}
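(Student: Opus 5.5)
The plan is a direct computation: I substitute $U=W$ (resp.\ $\Phi=\Theta$) in the definitions of $\Au$ and $\Ap$, insert the definition \eqref{numflux} of the numerical fluxes, and use the elementwise integration by parts formula
\[
(u,\nabla\cdot \fu)_{\Th} = -(\nabla u,\fu)_{\Th} + (u ,\fu\cdot \n)_{\Eh}
\]
to cancel the volume terms. This formula is legitimate for $U,\Phi\in\V$, since $u\in \HTh{H^1(\Th)}$ and $\fu\in(\HTh{H^1(\Th)})^2\subset\Hdiv{\Th}$, so all boundary traces are in $L^2(\partial K)$.

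For $\Au$, setting $W=U$ in \eqref{A.1} gives
\[
\Au(U,U)=\imu\|\fu\|_\O^2 + (\hfu\cdot\n,u)_{\Eh} - (\fu,\nabla u)_{\Th} - (u,\nabla\cdot\fu)_{\Th} + (\hu,\fu\cdot\n)_{\Eh} - (\hfu\cdot\n,\hu)_{\Eh} + \Ku\|\hu\|_{\GR}^2 .
\]
By the integration by parts identity, the two volume terms combine into $-(u,\fu\cdot\n)_{\Eh}$. Grouping the skeleton terms, I am left with
\[
(\hfu\cdot\n - \fu\cdot\n,\, u-\hu)_{\Eh} = \tu h^{-\kappa}\bigl(\PiE{\kj}(u-\hu),\, u-\hu\bigr)_{\Eh}.
\]
Since $\PiE{\kj}$ is, on each $\partial K$, the $L^2(\partial K)$-orthogonal projection onto $\Poly{\kj}(\EhK)$, it is self-adjoint and idempotent. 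Hence this term equals $\tu h^{-\kappa}\|\PiE{\kj}(u-\hu)\|_{\Eh}^2$, which gives \eqref{A.11}.

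I note that no single-valuedness of $\hu$ is used, because the skeleton pairings $(\cdot,\cdot)_{\Eh}$ are sums over $\partial K$ of elementwise integrals. Likewise, the cancellation does not rely on continuity of normal fluxes across edges.

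The $\Ap$ identity follows identically. The terms $(a\p,\p)_\O$ and $(\inu\fp,\fp)_\O$ give $a\|\p\|_\O^2+\inu\|\fp\|_\O^2$. The volume terms $-(\fp,\nabla\p)_{\Th}-(\p,\nabla\cdot\fp)_{\Th}$ integrate by parts to $-(\p,\fp\cdot\n)_{\Eh}$. The skeleton terms then collapse to $(\hfp\cdot\n-\fp\cdot\n,\p-\hp)_{\Eh}=\tp\|\p-\hp\|_{\Eh}^2$, with no projection here. Finally, the Robin term contributes $\Kp\|\hp\|_{\GR}^2$.

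There is no real obstacle. The only points requiring care are the sign bookkeeping of the four skeleton terms, and the observation that the projection identity $(\PiE{\kj}v,v)_{\Eh}=\|\PiE{\kj}v\|_{\Eh}^2$ holds elementwise on $\prod_K L^2(\partial K)$.
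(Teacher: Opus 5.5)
Your proof is correct and follows essentially the route the paper indicates. The paper states only that the identities follow "by elementwise integration by parts", and your computation is exactly that: the volume terms collapse to $-(u,\fu\cdot\n)_{\Eh}$, the skeleton terms group into $(\hfu\cdot\n-\fu\cdot\n,u-\hu)_{\Eh}$, and the elementwise projection identity $(\PiE{\kj}v,v)_{\Eh}=\|\PiE{\kj}v\|_{\Eh}^2$ (with the analogous unprojected version for $\Ap$) finishes it.
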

We observe that $| \cdot |_{\Au}$ is a seminorm, while $\| \cdot \|_{\Ap}$ is a norm, since $a > 0$.
We also recall that, for $x \in H^1(0,T;X)$, $y \in X$, $X$ Hilbert space with scalar product $(\cdot,\cdot)_X$, it holds that (\cite[Section 5.9]{evans2022partial})
\begin{equation*}
\dt(x,y)_X = (\dt x, y)_X.
\end{equation*}

\subsection{Approximation bounds} We devote this section to introducing  some projection/quasi-interpolation operators that we will need in the following, and to  proving/recalling their properties. Throughout this and the next sections, we will write $\pm Q$ to indicate adding and subtracting the quantity $Q$.  We will consider the following projectors.

\subsubsection{The stabilization-parametric HDG projection onto
	$\Poly{k}(\Th)\times(\Poly{k}(\Th))^2$} \label{sec:HDGProj}
For $K\in\Th$ and  $\gamma$ positive, we define
the local HDG projector
\[
\PiHDG^\gamma(v,\boldsymbol q)
:=\bigl(\PiHs{\gamma}(v,\boldsymbol q),
\PiHv{\gamma}(v,\boldsymbol q)\bigr)
\in\Poly{k}(K)\times(\Poly{k}(K))^2
\]
as proposed in \cite{cockburn2010projection}.  Its two components are
determined by the elementwise coupled conditions
\begin{align}
	\bigl(\PiHv{\gamma}(v,\boldsymbol q),\boldsymbol r\bigr)_K
	&= (\boldsymbol q,\boldsymbol r)_K
	&&\forall\boldsymbol r\in(\Poly{k-1}(K))^2,
	\label{proj1}\\
	\bigl(\PiHs{\gamma}(v,\boldsymbol q),z\bigr)_K
	&= (v,z)_K
	&&\forall z\in\Poly{k-1}(K),
	\label{proj2}\\
	\left(
	\PiHv{\gamma}(v,\boldsymbol q)\cdot\n
	+\gamma\PiHs{\gamma}(v,\boldsymbol q),\widehat z
	\right)_{\bK}
	&=
	\left(
	\boldsymbol q\cdot\n+\gamma v,\widehat z
	\right)_{\bK}
	&&\forall\widehat z\in\Poly{k}(\EhK).
	\label{proj3}
\end{align}
For a triple $X=(v,\widehat v,\boldsymbol q)$, we use the same symbol for
the corresponding global projection and set
\begin{equation}\label{eq:defHDG-glob-proj}
	\PiHDG^\gamma(X)
	:=\bigl(\PiHs{\gamma}(v,\boldsymbol q),
	\PiE{k}(\widehat v),
	\PiHv{\gamma}(v,\boldsymbol q)\bigr),
\end{equation} where
$\PiE{k}$ is the $L^2$ projection onto $\Poly{k}(\Eh)$ introduced after \eqref{numflux}.
In particular, the projection used to approximate of the chemoattractant variables
is
\[
\PiHDG^{\tp}(\Phi)
=\bigl(\PiHs{\tp}(\p,\fp),\PiE{k}(\hp),\PiHv{\tp}(\p,\fp)\bigr),
\qquad \Phi=(\p,\hp,\fp).
\]

Provided $\gamma \simeq 1$, we have the following interpolation bounds \cite[Theorem 2.1]{cockburn2010projection}: for
$\p \in \HTh{H^r(\Th)}$, $1 \leq r$, $\fp \in \HTh{H^{\rho}(\Th)}$, $1 \leq \rho $
\begin{gather}
	\| \p - \PiHs{\gamma}(\p,\fp) \|_{\O} \lesssim h^{\min \{r,k+1\}} \| \p \|_{\HTh{H^{r}(\Th)}}
	+  h^{\min \{\rho,k+1\}} \| \nabla \cdot \fp \|_{\HTh{H^{\rho-1}(\Th)}},\\
	\| \fp - \PiHv{\gamma}({\p},\fp) \|_{\O} \lesssim  h^{\min \{\rho,k+1\}}  \| \fp \|_{\HTh{H^{\rho}(\Th)}} +  h^{\min \{r,k+1\}}  \| \p \|_{\HTh{H^r(\Th)}}.
\end{gather}
Applying a standard trace bound we also have
\begin{multline}\label{boundEhPro}
	\| \p - \PiHs{\gamma}(\p,\fp) \|_{\Eh}^2  \lesssim h^{-1}\| \p - \PiHs{\gamma}(\p,\fp)\|^2_{\O} + h\|\nabla (\p - \PiHs{\gamma}(\p,\fp)) \|^2_{\Th} \\
	\lesssim h^{-1}\| \p - \PiHs{\gamma}(\p,\fp)\|^2_{\O}+  h\|\nabla (\p - \widetilde \p) \|_{\Th}^2 + h^{-1}\| \widetilde \p - \PiHs{\gamma}(\p,\fp) \|_{\O}^2 \\
	\lesssim h^{2\min\{r,k+1\}-1} \| \p \|^2_{\HTh{H^r(\Th)}} + h^{2\min\{\rho,k+1\}-1 }\| \nabla \cdot \fp \|^2_{\HTh{H^{\rho-1}(\Th)}},
\end{multline}
where $\widetilde \p \in \Poly{k}(\Th)$ is the $\HTh{H^{1}(\Th)}$ projection (i.e. the elementwise $H^1$ projection onto $\Poly{k}(K)$ for each $K \in \Th$) with zero mean error on each element, and where we applied an inverse estimate and standard  approximation bounds.

\begin{corollary} Assuming that $\gamma \simeq 1$, the following bounds hold for all $\p \in H^{r}(\O)$, $2 \leq r$,
	and ${\fp=-\nu\nabla\p}$:
	{	\begin{gather}\label{4.22}
			\| \p - \PiHs{\gamma}(\p,\fp) \|_{\O} +  	\| \fp - \PiHv{\gamma}(\p,\fp) \|_{\O} \lesssim h^{\min\{r-1,k+1\}} \| \p \|_{H^r(\O)},\\
\label{4.22b}			\| \p - \PiHs{\gamma}(\p,\fp) \|_{\Eh} \lesssim h^{\min\{r-1,k+1\}-1/2} \| \p \|_{H^r(\O)}.
	\end{gather}}
\end{corollary}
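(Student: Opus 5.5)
The corollary follows by specializing the general interpolation bounds of \cite[Theorem 2.1]{cockburn2010projection} recalled above, together with the trace estimate \eqref{boundEhPro}. We apply them with $\fp=-\nu\nabla\p$, so that all regularity of the flux is inherited from $\p$. Since $\p\in H^r(\O)$ with $r\ge 2$, we have $\fp\in (H^{r-1}(\O))^2$ with $\|\fp\|_{H^{r-1}(\O)}\lesssim \nu\|\p\|_{H^r(\O)}$. Moreover $\nabla\cdot\fp=-\nu\Delta\p\in H^{r-2}(\O)$, so $\|\nabla\cdot\fp\|_{H^{r-2}(\O)}\lesssim\|\p\|_{H^r(\O)}$. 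Broken norms on $\Th$ are bounded by the global ones, so we may work with global Sobolev norms throughout.

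For \eqref{4.22} we use the two interpolation bounds with exponents $r$ (for $\p$) and $\rho=r-1\ge 1$ (for $\fp$), which is admissible. The scalar bound gives
\[
\|\p-\PiHs{\gamma}(\p,\fp)\|_{\O}\lesssim h^{\min\{r,k+1\}}\|\p\|_{H^r(\O)}+h^{\min\{r-1,k+1\}}\|\nabla\cdot\fp\|_{H^{r-2}(\O)}.
\]
The vector bound gives
\[
\|\fp-\PiHv{\gamma}(\p,\fp)\|_{\O}\lesssim h^{\min\{r-1,k+1\}}\|\fp\|_{H^{r-1}(\O)}+h^{\min\{r,k+1\}}\|\p\|_{H^r(\O)}.
\]
Since $h\lesssim 1$ and $\min\{r,k+1\}\ge\min\{r-1,k+1\}$, every power of $h$ is bounded by $h^{\min\{r-1,k+1\}}$. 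Inserting the bounds on $\fp$ and $\nabla\cdot\fp$ in terms of $\|\p\|_{H^r(\O)}$ and summing yields \eqref{4.22}.

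For \eqref{4.22b} we use \eqref{boundEhPro} with the same choices $r$ and $\rho=r-1$. This gives
\[
\|\p-\PiHs{\gamma}(\p,\fp)\|_{\Eh}^2\lesssim h^{2\min\{r,k+1\}-1}\|\p\|_{H^r(\O)}^2+h^{2\min\{r-1,k+1\}-1}\|\nabla\cdot\fp\|_{H^{r-2}(\O)}^2.
\]
Bounding the first power by the second, using the bound on $\nabla\cdot\fp$, and taking square roots gives \eqref{4.22b}.

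There is no genuine obstacle. The one point to check is the admissibility of $\rho=r-1$ in the interpolation theorem, which needs $\rho\ge1$, i.e.\ $r\ge2$. This is exactly the hypothesis, and it is why the rate is capped at $r-1$ rather than $r$. Since $\nabla\cdot\fp$ must be measured in $H^{\rho-1}=H^{r-2}$, the case $r=2$ only requires $L^2$ regularity of $\Delta\p$, which is available.
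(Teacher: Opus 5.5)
Your proof is correct and follows the paper's own route. The paper gives no separate proof: the corollary is the specialization of the recalled Cockburn--Gopalakrishnan--Sayas bounds and of \eqref{boundEhPro} with $\rho=r-1$, using $\nabla\cdot\fp=-\nu\Delta\p\in H^{r-2}(\O)$ and $h\lesssim 1$ to reduce all powers of $h$ to $h^{\min\{r-1,k+1\}}$, exactly as you do.
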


We also recall that, by construction, the HDG projection satisfies the following Proposition, the proof of which is recalled  in Appendix~\ref{section:prop_projections}, for the sake of completeness.

\begin{proposition}\label{prop:HDGproj}
	For all $\Phi = (\p,\hp,\fp)  \in \HTh{H^1(\Th)} \times L^2(\Eh) \times \HTh{H^1(\Th)}^2$ and all ${\Theta = (\t,\widehat\t,\ft) \in \Sh}$ it holds that
	\begin{gather}	
		\Ap(\Phi - \PiHDG^{\tp}(\Phi),\Theta)	=  (a (\p-\PiHs{\gamma_\phi}(\p,\fp)),\t)_\Omega +  (\inu (\fp-\PiHv{\gamma_\phi}(\p,\fp)),\ft)_{\Omega}\label{ortHDG2}.
	\end{gather}
\end{proposition}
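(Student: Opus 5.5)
Write $\PiHDG^{\tp}(\Phi)=(\p^\pi,\PiE{k}\hp,\fp^\pi)$ and $E=\Phi-\PiHDG^{\tp}(\Phi)=(e,\he,\fe)$, with $e=\p-\p^\pi$, $\he=\hp-\PiE{k}\hp$, $\fe=\fp-\fp^\pi$. The plan is to expand $\Ap(E,\Theta)$ term by term from its definition, plugging in the numerical flux \eqref{numflux}: $\widehat{\fe}\cdot\n=\fe\cdot\n+\tp(e-\he)$. The terms $(a e,\t)_\Omega+(\inu\fe,\ft)_\Omega$ are exactly the right-hand side of \eqref{ortHDG2}. So it suffices to show that everything else vanishes:
\begin{multline*}
(\fe\cdot\n+\tp(e-\he),\t)_{\Eh}-(\fe,\nabla\t)_{\Th}-(e,\nabla\cdot\ft)_{\Th}+(\he,\ft\cdot\n)_{\Eh}\\
-(\fe\cdot\n+\tp(e-\he),\ct)_{\Eh}+\Kp(\he,\ct)_{\GR}=0 .
\end{multline*}

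The terms are handled one group at a time.
\begin{itemize}
\item Since $\t\in\Poly{k}(K)$, we have $\nabla\t\in(\Poly{k-1}(K))^2$, so \eqref{proj1} gives $(\fe,\nabla\t)_{\Th}=0$. Similarly $\nabla\cdot\ft\in\Poly{k-1}(K)$, so \eqref{proj2} gives $(e,\nabla\cdot\ft)_{\Th}=0$.
\item $\he$ is the error of the $L^2(\Eh)$ projection onto $\Poly{k}(\Eh)$. The traces $\ft\cdot\n|_e$, $\t|_{\partial K}$ and $\ct$ are all piecewise polynomials of degree $\le k$ on each edge. Hence $(\he,\ft\cdot\n)_{\Eh}=0$, $\tp(\he,\t)_{\Eh}=0$ (using $\tp$ constant), $\tp(\he,\ct)_{\Eh}=0$, and $\Kp(\he,\ct)_{\GR}=0$ because $\GR$ is a union of mesh edges.
\item What remains is $(\fe\cdot\n+\tp e,\t-\ct)_{\Eh}$. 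On each $\partial K$, $(\t-\ct)|_{\partial K}\in\Poly{k}(\EhK)$, since $\ct$ is single valued but its restriction to $\partial K$ is still edgewise $\Poly{k}$. So \eqref{proj3} with $\gamma=\tp$ makes this term vanish elementwise.
\end{itemize}

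One point needs care: which sign convention the $(\fp\cdot\n,\cdot)_{\Eh}$ pairing uses. Each element uses its own outward normal $\n^K$, consistent with \eqref{proj3}, so the elementwise application of \eqref{proj3} is legitimate. I also need $\hp$ to be single valued on $\Eh$, so that $\PiE{k}\hp\in\Poly{k}(\Eh)$, and $\Phi$ to be regular enough that the trace pairings make sense; both hold under the stated assumptions.

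The only mildly delicate step is grouping the stabilization terms so that \eqref{proj3} applies with the combined test function $\t-\ct$. Everything else is direct orthogonality bookkeeping.
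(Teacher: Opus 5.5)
Your proposal is correct and follows the paper's proof: both expand $\Ap(\Phi-\PiHDG^{\tp}(\Phi),\Theta)$ term by term and eliminate every term except the two volume terms. The tools are the same: \eqref{proj1} and \eqref{proj2} for the volume pairings, the edgewise $L^2$-orthogonality of $\hp-\PiE{k}\hp$ against degree-$k$ traces, and \eqref{proj3} for the remaining boundary terms. The only cosmetic difference is that you apply \eqref{proj3} once with the combined test function $\t-\ct$, whereas the paper applies it separately with $\t$ and with $\ct$.
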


\subsubsection{The $L^2$ projection onto $\Poly{k}(\Th) \times \Poly{k-1}(\Eh) \times (\Poly{k-1}(\Th))^2$}
\label{sec:L2Proj}
  In the case $k'=k-1$, it will be convenient to also introduce the plain $L^2$ projector onto the space $\Vh$. 
We let $\PiO : L^2(\O) \to \Poly{k}(\Th)$, $ \fPiO: (L^2(\O))^2 \to (\Poly{k-1}(\Th))^2$ denote the standard $L^2$ projections, and we let $\PiLdue$ denote the component-wise $L^2$ projection:
\[
\PiLdue(u,\hu,\fu) = (\PiO u, \PiE{k-1} \hu, \fPiO\fu) \in \Poly{k}(\Th)  \times \Poly{k-1}(\Eh) \times ( \Poly{k-1}(\Th))^2.
\]
Standard approximation estimates  immediately yield the following bounds: $U = (u,\hu,\fu) \in \HTh{H^{s}(\Th)}\times   H^{\hs}(\Eh) \times (\HTh{H^\jsmooth (\Th)})^2$ with $0 \leq s $, $0 \leq \hs$, $1 \leq \jsmooth $ imply
\begin{gather}\label{standardL2}
	\| u - \PiO(u) \|_\O \lesssim h^{\min\{s,k+1\}} | u |_{\HTh{H^s(\Th)}},
	\quad 
	\| \hu - \PiE{k-1}(\hu) \|_{\Eh} \lesssim h^{\min\{\hs,k\}} | \hu |_{H^{\hs}(\Eh)},\\
	 \| \fu - \fPiO \fu \|_{\O} \lesssim h^{\min\{\jsmooth,k \}} | \fu |_{\HTh{H^\jsmooth(\Th)}}.
\end{gather}
For $\fu = -\mu\nabla u - \chi u \fp/\nu$, with $u \in  H^s(\O)$ and $\fp \in W^{s-1,\infty}(\O)^2$, this becomes
\begin{equation*}
 \| \fu - \fPiO \fu \|_{\O} \lesssim h^{\min\{s-1,k \}} \| u \|_{H^s(\O)},
\end{equation*}
the implicit constant depending on $\fp$. Moreover, $u \in \HTh{H^s(\Th)}$, 
$1 < s$ implies \cite{ern2021finite,dupont1980polynomial}
\begin{equation}\label{boundpiLinfty}
	\| u - \PiO u \|_{L^\infty(\O)} \lesssim h^{\min\{s-1,k\}} \| u \|_{\HTh{H^s(\Th)}}.
\end{equation}
Moreover, proceeding as in \eqref{boundEhPro}, we have
\begin{equation}
	\label{4.22L2} \| u - \PiO u \|_{\Eh} \lesssim h^{\min\{s,k+1\}-1/2} | u |_{\HTh{H^s(\Th)}}.
\end{equation}
If $u \in H^s(\O)$ with $1/2 < s < 1$, then a similar bound to \eqref{4.22L2} still holds. Indeed, using \eqref{trace_s} instead of \eqref{trace} in the argument leading to \eqref{4.22L2}, we have
\begin{equation}
	\| u - \PiO u \|^2_{\Eh} \lesssim h^{-1} \| u - \PiO u \|^2_{\O} + \|  u - \PiO u \|^{2-1/s}_{\O} | u - \PiO u |^{1/s}_{H^s(\Th)} \lesssim h^{2s-1} \| u \|^2_{H^s(\O)}.
\end{equation}
Analogously, for $\fw \in (H^s(\O))^2$, $1/2 < s < 1$, we have that
\[
\| \fw - \fPiO \fw \|_{\Eh}^2 \lesssim h^{2s-1} \| \fw \|_{(H^s(\O))^2}.
\]

The following proposition, whose proof we postpone to the Appendix, will be useful in the following for handling the case $k'=k-1$ ($\kappa = 1$).
\begin{proposition} \label{prop:L2} Let $k'=k-1$.
	For all $W = (w, \hw, \fw)\in \HTh{H^1(\Th)} \times L^2(\Eh) \times (\HTh{H^1(\Th)})^2$, and  and all $U = (u,\hu,\fu)\in \Vh$, setting  $\PiLdue(W) = (w^\pi, \hw^\pi, \fw^\pi)$, it holds that 
	\begin{multline}
		\label{A.26}	\Au(U,W-\PiLdue(W)) = 	(\PiE{k-1}(u-\hu),\tu h^{-1}\PiE{k-1}(w-w^\pi))_{\Eh} \\ -(u - \hu,(\fw - \fw^\pi)\cdot \n )_{\Eh},
			\end{multline}
		\begin{multline}
		\Au(W - \PiLdue(W),U) = 	(\tu h^{-1}\PiE{k-1}(w-w^\pi),\PiE{k-1}(u - \hu))_{\Eh} \\ + ((\fw - \fw^\pi)\cdot \n,u - \hu)_{\Eh}.
	\end{multline}
\end{proposition}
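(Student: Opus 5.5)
The plan is to expand $\Au(U,W-\PiLdue(W))$ term by term using definition \eqref{A.1}, with $U\in\Vh$ discrete and the second argument $E=(e,\he,\fe):=W-\PiLdue(W)=(w-w^\pi,\hw-\hw^\pi,\fw-\fw^\pi)$. The only tool is $L^2$-orthogonality, which kills every term except the boundary ones. Recall $\kappa=1$, so $\hfu\cdot\n=\fu\cdot\n+\tu h^{-1}\PiE{k-1}(u-\hu)$.

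For the first identity I treat the terms as follows.
\begin{enumerate}
\item The term $(\imu\fu,\fe)_\O$ vanishes, since $\fu\in(\Poly{k-1}(\Th))^2$ and $\fe$ is $L^2$-orthogonal to that space.
\item The term $-(\fu,\nabla e)_{\Th}$ is integrated by parts elementwise. This gives $(\nabla\cdot\fu,e)_{\Th}-(\fu\cdot\n,e)_{\Eh}$, and the volume part vanishes because $\nabla\cdot\fu\in\Poly{k-2}\subset\Poly{k}$ and $e\perp\Poly{k}(\Th)$.
\item The remaining $\fu\cdot\n$ contributions on $\Eh$ combine as $(\fu\cdot\n,e)_{\Eh}-(\fu\cdot\n,e)_{\Eh}-(\fu\cdot\n,\he)_{\Eh}$. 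Since $\fu\cdot\n\in\Poly{k-1}(\EhK)$ and $\he\perp\Poly{k-1}(\Eh)$, this is zero.
\item The stabilization part is $(\tu h^{-1}\PiE{k-1}(u-\hu),e-\he)_{\Eh}$. Its $\he$ piece vanishes again because $\PiE{k-1}(u-\hu)\in\Poly{k-1}(\EhK)$ is tested against $\he\perp\Poly{k-1}$ edgewise. In the $e$ piece, $\PiE{k-1}$ can be moved onto $e$ by self-adjointness, giving $(\PiE{k-1}(u-\hu),\tu h^{-1}\PiE{k-1}(w-w^\pi))_{\Eh}$.
\item The term $\Ku(\hu,\he)_{\GR}$ vanishes, since $\hu\in\Poly{k-1}(\Eh)$ and $\GR$ is a union of edges.
\item The term $-(u,\nabla\cdot\fe)_{\Th}+(\hu,\fe\cdot\n)_{\Eh}$ is integrated by parts to give $(\nabla u,\fe)_{\Th}-(u-\hu,\fe\cdot\n)_{\Eh}$. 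The volume part vanishes because $\nabla u\in(\Poly{k-1}(\Th))^2$.
\end{enumerate}
Collecting the surviving terms yields \eqref{A.26}.

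For the second identity the roles are swapped: $\Au(E,U)$ with $E$ non-discrete and the test $U$ discrete. The term $(\imu\fe,\fu)_\O$ vanishes by orthogonality. The term $-(e,\nabla\cdot\fu)_{\Th}$ vanishes since $\nabla\cdot\fu\in\Poly{k-2}$. The term $(\he,\fu\cdot\n)_{\Eh}$ vanishes because $\fu\cdot\n$ is in $\Poly{k-1}$ edgewise. The term $\Ku(\he,\hu)_{\GR}$ vanishes as before.

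For the remaining terms, write $\widehat{\fe}\cdot\n=\fe\cdot\n+\tu h^{-1}\PiE{k-1}(e-\he)$. Note that $\PiE{k-1}\he=0$, so $\PiE{k-1}(e-\he)=\PiE{k-1}(w-w^\pi)$. The flux terms then give $(\widehat{\fe}\cdot\n,u-\hu)_{\Eh}-(\fe,\nabla u)_{\Th}$. Here $(\fe,\nabla u)_{\Th}=0$, again since $\nabla u\in(\Poly{k-1}(\Th))^2$. Expanding the flux, the stabilization piece becomes $(\tu h^{-1}\PiE{k-1}(w-w^\pi),\PiE{k-1}(u-\hu))_{\Eh}$ by self-adjointness of $\PiE{k-1}$. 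What remains is $((\fw-\fw^\pi)\cdot\n,u-\hu)_{\Eh}$, which is the second identity.

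I do not expect a real obstacle; the main care is bookkeeping of signs and single- versus double-valued traces. The key point to verify is that orthogonality of $\he$ holds edge-by-edge against the double-valued functions $\fu\cdot\n$ and $\PiE{k-1}(u-\hu)$. This is true because $\PiE{k-1}$ on $L^2(\Eh)$ is the edgewise projection. One must also confirm that $\fe\cdot\n$ terms do not vanish, since $\fe$ is only orthogonal in the volume sense, which is exactly why they survive in the statement.
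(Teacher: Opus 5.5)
Your proposal is correct and is essentially the paper's own proof. In both, $\Au$ is expanded term by term. $L^2$-orthogonality of the component projections kills the volume terms and the $\hw-\hw^\pi$ terms. Elementwise integration by parts disposes of $(\fu,\nabla(w-w^\pi))_{\Th}$ and $(u,\nabla\cdot(\fw-\fw^\pi))_{\Th}$. Finally, $\PiE{k-1}$ is placed on both factors of the stabilization term, using $\PiE{k-1}(\hw-\hw^\pi)=0$ in the second identity, with the same sign bookkeeping as the paper.
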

cx
\subsection{The elliptic projection onto $\Vh$}\label{sec:cellular-projection}
For $U = (u,\hu,\fu) \in H^1(\O) \times L^2(\Eh) \times (H^1{(\O)})^2$ we let $\UIh =(\uI,  \huI,\fuI) \in \Vh$ be such that for all $W = (w,\hw,\fw) \in \Vh$
\begin{equation}\label{pb:elliptic}
	M\, (\uI,w)_\Omega + \Au(\UIh,W) + \Cpsi(\uIh,\fw) = 	M\,(u,w)_\Omega + \Au(U,W) + \Cpsi(u,\fw),
\end{equation}
where $M > 0$ is a constant which we assume to be sufficiently large, and where $\Cpsi:L^2(\O) \times (L^2(\O))^2 \to \mathbb{R}$ is the bilinear form obtained by fixing the value of $\fp$ in the trilinear operator $\C$, that is
\(\Cpsi(u,\fw) = \C(u,\fp;\fw)\).
Problem \eqref{pb:elliptic} is well posed (see Section ~\ref{sec:proof_cell_proj}) and we have the
following lemma, whose proof, which  
is the only point where the two
schemes require different arguments, is postponed to Section  \ref{sec:proof_cell_proj}.

\begin{lemma}[Cellular projection estimates]\label{lem:cellular-projection}
	Let $\UIh$ solve \eqref{pb:elliptic}.  There exist $M_0 > 0$  independent of $h$ and $h_0>0$	such that, if $M > M_0$ and $h<h_0$, then, under the assumptions of Theorem \ref{thm:KSerror}, we have, uniformly in time,
	\begin{gather}
		\label{cellproj-u}\| u - \uI \|_{\O} \lesssim h^{\sbar-\epsB}\|u\|_{H^s(\O)} + h^{\sigbar+\kappa-\epsB}\|\fu\|_{H^\jsmooth(\O)},\\
		\label{cellproj-j}\| \fu - \fuI \|_{\O}  \lesssim h^{\sbar-\kappa}\|u\|_{H^s(\O)} + h^{\sigbar}\|\fu\|_{H^\jsmooth(\O)},\\
		\label{cellproj-hu} \| \PiE{\kj}(\uI - \huI)\|_{\Eh} \lesssim h^{\sbar-1/2}\|u\|_{H^s(\O)} + h^{\sigbar + \kappa - 1/2}\|\fu\|_{H^\jsmooth(\O)},\\
		\label{cellproj-dt}\|\dt u-\dt\uIh\|_\O
		\lesssim h^{\sbar-\epsB}(\|u\|_{H^s(\O)}+ \| \dt u \|_{H^s(\O)}) + h^{\sigbar+\kappa-\epsB}(\|\fu\|_{H^\sigma(\O)} + \|\dt \fu\|_{H^\sigma(\O)}).
	\end{gather}
	For $s>k+1$ or $\jsmooth>\kj+1$ the approximation results above
	are applied with the capped indices $\sbar$, $\sigbar$ in place of
	$s$, $\jsmooth$.
\end{lemma}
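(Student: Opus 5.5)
The proof splits the error against a reference projection and handles the two cases $\kj=k-1$ and $\kj=k$ separately only in the choice of that projection. Let $\Pi U$ denote $\PiLdue(U)$ when $\kj=k-1$, and $\PiHDG^{\tu}(U)$ when $\kj=k$. In the latter case $\kappa=0$ and the stabilization is $\tu\,\PiE{k}$, so $\Vh$ has the structure of $\Sh$. Write
\[
U-\UIh=(U-\Pi U)+(\Pi U-\UIh)=:E^\pi+E_h,\qquad E_h=(e_u,\he,\fe)\in\Vh .
\]
Since \eqref{pb:elliptic} is Galerkin-orthogonal, for every $W\in\Vh$ we have
\[
M(e_u,w)_\O+\Au(E_h,W)+\Cpsi(e_u,\fw)=-M(u-u^\pi,w)_\O-\Au(E^\pi,W)-\Cpsi(u-u^\pi,\fw).
\]
The first step is coercivity. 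We test with $W=E_h$ and use \eqref{A.11}. Bounding $|\Cpsi(e_u,\fe)|\le \inu|\ctsens|\,\|\fp\|_{L^\infty}\|e_u\|_\O\|\fe\|_\O$ and applying Young's inequality gives
\[
M\|e_u\|^2_\O+|E_h|^2_{\Au}\gtrsim \|e_u\|_\O^2+|E_h|^2_{\Au}
\]
once $M>M_0\sim \|\fp\|^2_{L^\infty}$. This also gives well-posedness of \eqref{pb:elliptic}.

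For the right-hand side, the term $\Au(E^\pi,E_h)$ is handled by Proposition~\ref{prop:L2} when $\kj=k-1$, and by the analogue of Proposition~\ref{prop:HDGproj} when $\kj=k$. In either case only boundary terms survive:
\[
\bigl(h^{-\kappa}\PiE{\kj}(u-u^\pi),\PiE{\kj}(e_u-\he)\bigr)_{\Eh},\qquad \bigl((\fu-\fu^\pi)\cdot\n,\,e_u-\he\bigr)_{\Eh},
\]
together with $\alpha(\fu-\fu^\pi,\fe)$ when $\kj=k$. These are bounded with \eqref{4.22L2} and the trace estimates for $\fu-\fPiO\fu$.

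There is one delicacy in the term $(\fu-\fu^\pi)\cdot\n$ paired with $e_u-\he$ rather than with its projection. Since $(\fu-\fPiO\fu)\cdot\n$ need not be orthogonal to $\Poly{k-1}$ on edges, one splits $e_u-\he=\PiE{k-1}(e_u-\he)+(I-\PiE{k-1})e_u$. The second piece is controlled by $h^{1/2}\|\nabla e_u\|_{\Th}$, which in turn is bounded by $\|\fe\|_\O+\|u\|_{L^\infty}\|\fp\|\ldots$ plus $h^{-1/2}\|\PiE{k-1}(e_u-\he)\|_{\Eh}$. This follows from the discrete flux equation tested with $\fw=\nabla e_u$, i.e.\ a local discrete Poincaré/lifting argument. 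Collecting these bounds gives the energy estimate
\[
\|e_u\|_\O+|E_h|_{\Au}\lesssim h^{\sbar-\kappa}\|u\|_{H^s}+h^{\sigbar}\|\fu\|_{H^\jsmooth}.
\]
Combined with the approximation properties of $\Pi$, this yields \eqref{cellproj-j} and \eqref{cellproj-hu}, since the $|\cdot|_{\Au}$-norm contains $h^{-\kappa/2}\|\PiE{\kj}(e_u-\he)\|$.

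The bound \eqref{cellproj-u} requires one extra power $h^\kappa$ (minus $\epsB$) over the energy estimate, and I expect this duality step to be the main obstacle. The plan is an Aubin--Nitsche argument using the adjoint problem
\[
M z-\nabla\cdot(\mu\nabla z)+\chi\,\fp\cdot\nabla z=e_u\quad\text{in }\O,
\]
with the adjoint Neumann/Robin conditions, written in mixed form as $\alpha\fw^*+\nabla z=0$ plus the drift term. On the convex polygon, elliptic regularity gives $\|z\|_{H^2}\lesssim\|e_u\|_\O$ for pure Neumann or pure Robin conditions. For mixed Neumann--Robin conditions, the points where the condition type switches limit the regularity to $z\in H^{3/2+1/2-\varepsilon}$, i.e.\ $H^{2-\varepsilon}$. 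This is exactly the source of $\epsB$, and it only matters when $\kj=k-1$, because in that case $\fw^*$ is approximated with degree $k-1$ and the gain is limited by the regularity of $z$. For $k=1$, $\kj=0$, the extra hypothesis $\jsmooth\ge3/2-\epsB$ enters in the edge terms, through the fractional trace bound for $\fu-\fPiO\fu$.

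The duality argument then runs as follows. Test the adjoint problem in HDG form against $E_h$, insert $\Pi$ of the adjoint solution, and use the error equation. Each resulting term is a product of an energy-level error and an adjoint approximation error of size $h^{1-\epsB}\|e_u\|$ (or $h^{\kappa-\epsB}$ where the degree is $\kj$). The $M$ and $\Cpsi$ terms are lower order. For $\kj=k$ the projection $\PiHDG^{\tu}$ is used in both the primal and adjoint arguments, and the superconvergence mechanism is the standard one from \cite{cockburn2010projection}. Finally, \eqref{cellproj-dt} follows because \eqref{pb:elliptic} is linear in $U$ with coefficients whose only time dependence is through $\fp$. Differentiating in time gives the same problem for $\dt U-\dt\UIh$, plus a perturbation $\C(u-\uI,\dt\fp;\fw)$. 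That perturbation is bounded via $\|\dt\fp\|_{L^\infty}$ and the already proven bound \eqref{cellproj-u}, and the same energy and duality steps then apply.
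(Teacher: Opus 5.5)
Your architecture is the paper's, and using $e_u$ instead of $\xi=u-\uI$ as the adjoint datum is a harmless variant (slightly shorter). The shared ingredients are:
\begin{itemize}
\item the reference projection, $\PiLdue$ for $\kj=k-1$ and $\PiHDG^{\tu}$ for $\kj=k$;
\item coercivity for $M>M_0$, together with Proposition~\ref{prop:L2};
\item control of the unprojected jump $e_u-\he$ through $\|\nabla e_u\|_{\Th}$, obtained by testing with $\fw=\nabla e_u$;
\item Aubin--Nitsche with $H^{2-\varepsilon}$ adjoint regularity coming from the Neumann--Robin switch points;
\item time differentiation of \eqref{pb:elliptic}, treating $\C(u-\uI,\dt\fp;\fw)$ as a perturbation.
\end{itemize}

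\textbf{Gap in the duality step for $k=1$, $\kj=0$.} By Proposition~\ref{prop:L2}, the term $\Au(E^\pi,\PiLdue Z)$ coming from Galerkin orthogonality contains $\bigl((\fu-\fPiO\fu)\cdot\n,\,z-\PiE{0}z\bigr)_{\Eh}$. Your claim that every term is an energy-level error times an adjoint factor $h^{1-\epsB}\|e_u\|_\O$ fails for this term. The factor $h^{-1/2}\|z-\PiE{0}z\|_{\Eh}$ is only $O(\|e_u\|_\O)$. The factor $h^{1/2}\|(\fu-\fPiO\fu)\cdot\n\|_{\Eh}$ is at best $O(h)$, because the flux degree is $0$. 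So you obtain $O(h)\|e_u\|_\O$ rather than $O(h^{2-\epsB})\|e_u\|_\O$, however smooth $\fu$ is, and no trace estimate for $\fu-\fPiO\fu$ repairs this.

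The missing idea is a cancellation. $z-\PiE{k-1}z$ is single valued on the skeleton and $L^2(e)$-orthogonal to $\fPiO\fu\cdot\n\in\Poly{k-1}(e)$, while $\fu\cdot\n$ is continuous across interior edges. Hence all interior contributions cancel, and the term reduces to $\bigl(\fu\cdot\n-\PiE{k-1}(\fu\cdot\n),\,z-\PiE{k-1}z\bigr)_{\partial\O}\lesssim h^{\min\{\jsmooth-1/2,k\}}h^{\min\{3/2-\varepsilon,k\}}\|\fu\|_{H^\jsmooth(\O)}\|e_u\|_\O$. This is exactly where the hypothesis $\jsmooth\ge 3/2-\epsB$ enters. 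For $k\ge2$ your naive bound does suffice, since then $h^{-1/2}\|z-\PiE{k-1}z\|_{\Eh}\lesssim h^{1-\varepsilon}\|e_u\|_\O$.

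\textbf{Problem in the case $\kj=k$.} Contrary to what you write, no edge terms survive there. With $U^\pi=\PiHDG^{\tu}(U)$ the two edge terms combine into $\bigl((\fu-\fpiu)\cdot\n+\tu(u-\upi),\,e_u-\he\bigr)_{\Eh}$. This vanishes by \eqref{proj3}, since $e_u-\he\in\Poly{k}(\EhK)$, leaving only $\alpha(\fu-\fpiu,\fe)_\O$, as in \eqref{cancellation}.

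This exact cancellation is indispensable. For $\kappa=0$ the seminorm $|\cdot|_{\Au}$ controls $\|e_u-\he\|_{\Eh}$ with no $h^{-1/2}$ weight. Bounding those terms with \eqref{4.22L2} and trace estimates, as you propose, therefore loses $h^{1/2}$ and gives only $h^{k+1/2}$ for $\fu-\fuI$, which no duality argument can recover. Once the cancellation is used, the $M$-weighted energy estimate already gives \eqref{cellproj-u} at rate $h^{\sbar}+h^{\sigbar}$. So no adjoint problem is needed for $\kj=k$, and that (not the degree of the adjoint flux) is why $\epsB=0$ there even with mixed boundary conditions.

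\textbf{Minor.} For $\Au^*(Z,W)=\Au(W,Z)$ to hold, the adjoint flux must be $\fw^*=+\mu\nabla z$ (i.e.\ $\alpha\fw^*-\nabla z=0$). The adjoint drift is $\inu\chi\,\fp\cdot\nabla z$.
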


\subsection{Estimating $\Phi-\Phi_h$ as a function of $u-u_h$.} Assuming that $\phi \in H^1(0,T;H^\psmooth(\O))$, $2 < \psmooth$, we are now able to prove the following lemma (where, we recall, $\hp = \p|_{\Eh}$ and $\fp = - \nu \nabla \p$).
\begin{lemma}\label{lemma4.1} For all $T'$ with $[0,T']\subset J_h$ it holds that
\begin{multline}
\| \p - \p_h \|_{L^\infty(0,T';L^2(\O))}^2 +   \| \p - \p_h \|_{L^2(0,T';L^2(\O))}^2 
+  \| \fp - \fp_h \|_{L^2(0,T';L^2(\O))}^2  \\ 
+ 	\tp \| (\p_h - \hp_h) - (\PiHs{\tp}(\p,\fp ) - \PiE{k}\hp) \|_{L^2(0,T';L^2(\Eh))}^2 + K_\p \| \PiE{k} \phi - \hp_h \|_{L^2(0,T';L^2(\GR))}^2\\
\lesssim	 
h^{2(\rbar -1)} \| \p \|^2_{H^1(0,T';H^{\psmooth}(\O))}
+	\| u- u_h \|_{L^2(0,T';L^2(\O))}^2.
\end{multline}
\end{lemma}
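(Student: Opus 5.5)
We split the error through the HDG projection of Section~\ref{sec:HDGProj}. Write
\[
\Phi-\Phi_h=\bigl(\Phi-\PiHDG^{\tp}(\Phi)\bigr)+\bigl(\PiHDG^{\tp}(\Phi)-\Phi_h\bigr)=:E^I+E_h,
\qquad E_h=(e_\p,\he_\p,\fe_\p)\in\Sh .
\]
The interpolation part $E^I$ is controlled pointwise in time by \eqref{4.22}--\eqref{4.22b}, giving $h^{\rbar-1}\|\p(t)\|_{H^\psmooth(\O)}$. Taking the $L^\infty$ and $L^2$ norms in time gives a bound by $h^{\rbar-1}\|\p\|_{H^1(0,T';H^\psmooth(\O))}$, since $H^1(0,T')\hookrightarrow L^\infty(0,T')$. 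The work is therefore on $E_h$.

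\textbf{Error equation.} The exact $\Phi$ satisfies \eqref{KS-HDG2}, \eqref{KS-HDG4}, \eqref{KS-HDG5} by consistency: $\hp$ is the trace, the fluxes are continuous, and the boundary conditions hold. Subtracting the discrete equations, with $\Theta\in\Sh$, gives
\[
(\dt(\p-\p_h),\t)_\O+\Ap(\Phi-\Phi_h,\Theta)=(b(u-u_h),\t)_\O .
\]
Inserting $\pm\PiHDG^{\tp}(\Phi)$ and applying Proposition~\ref{prop:HDGproj} yields
\[
(\dt e_\p,\t)_\O+\Ap(E_h,\Theta)=(b(u-u_h),\t)_\O-(\dt(\p-\PiHs{\tp}(\p,\fp)),\t)_\O-(a(\p-\PiHs{\tp}),\t)_\O-(\inu(\fp-\PiHv{\tp}),\ft)_\O .
\]
Here $\PiHs{\tp}$ commutes with $\dt$, since the projection is linear and time independent.

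\textbf{Energy estimate.} Test with $\Theta=E_h$ and use the norm identity for $\|\cdot\|_{\Ap}$. This gives
\[
\tfrac12\dt\|e_\p\|_\O^2+\inu\|\fe_\p\|_\O^2+\tp\|e_\p-\he_\p\|_{\Eh}^2+\Kp\|\he_\p\|_{\GR}^2+a\|e_\p\|_\O^2 .
\]
On the right-hand side, Young's inequality absorbs $\tfrac a2\|e_\p\|^2$ and $\tfrac\inu2\|\fe_\p\|^2$. What remains is
\[
\|u-u_h\|_\O^2+\|\dt(\p-\PiHs{\tp})\|_\O^2+\|\p-\PiHs{\tp}\|_\O^2+\|\fp-\PiHv{\tp}\|_\O^2 ,
\]
and each projection term is $\lesssim h^{2(\rbar-1)}(\|\p\|^2_{H^\psmooth}+\|\dt\p\|^2_{H^\psmooth})$ by \eqref{4.22}.

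Integrate on $(0,t)$. The initial term $e_\p(0)=\PiHs{\tp}\p_0-\PiO\p_0$ is bounded by $\|\p_0-\PiHs{\tp}\p_0\|_\O\lesssim h^{\rbar-1}\|\p(0)\|_{H^\psmooth}$, because $\PiO$ is the best approximation in $\Poly{k}(\Th)$. Taking the supremum in $t$ then gives the bound on $\|e_\p\|_{L^\infty L^2}$ together with the $L^2(0,T')$ bounds on $\fe_\p$, on $e_\p-\he_\p$ on $\Eh$, and on $\he_\p$ on $\GR$. No Gronwall argument is needed, since the coupling term involves only $u-u_h$, which is treated as data. Note that $\he_\p=\PiE{k}\hp-\hp_h$, and this is exactly the quantity appearing in the statement.

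\textbf{Combination.} The triangle inequality combines the $E_h$ bounds with the $E^I$ bounds to give the estimates on $\p-\p_h$ and $\fp-\fp_h$. The jump and boundary terms in the statement are already expressed through $E_h$, so they need no interpolation contribution.

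The main point to check is the consistency step. We must verify that $\Ap$ applied to the exact triple is consistent with the boundary terms on $\GN$ and $\GR$ and with the numerical-flux definition, since $\p|_K-\hp=0$ there. We must also confirm that Proposition~\ref{prop:HDGproj} leaves no residual boundary terms, given that the $\Kp$ term uses $\PiE{k}\hp$ and $(\PiE{k}\hp,\ct)_{\GR}=(\hp,\ct)_{\GR}$. Beyond that, the argument is a routine parabolic energy estimate.
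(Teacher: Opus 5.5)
Your proposal is correct and follows essentially the same route as the paper: you split the error through $\PiHDG^{\tp}(\Phi)$, use Proposition~\ref{prop:HDGproj} to reduce the projection residual to the $a$- and $\inu$-weighted volume terms, and test the error equation with the discrete error. From there you absorb with Young's inequality, integrate in time (bounding the initial error via the $L^2$-optimality of $\PiO$), and conclude by the triangle inequality with \eqref{4.22}. The differences are only cosmetic: you use $\PiHDG^{\tp}(\Phi)-\Phi_h$ instead of the paper's $\Phi_h-\PiHDG^{\tp}(\Phi)$, and you spell out the consistency and boundary-term checks that the paper leaves implicit.
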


\begin{proof}
Let $\Delta = (\myeta,\heta,\feta) = \Phi_h - \PiHDG^{\tp}(\Phi)$. We start by observing that we have
\[
\|\myeta(0)\|_\O=\|\PiO\p_0-\PiHs{\tp}(\p_0,\fp(0))\|_\O\lesssim h^{\rbar-1}\|\p\|_{H^1(0,T;H^\psmooth(\O))}.
\]
It holds that
\begin{multline*}
(\dt \myeta,\myeta)_\O + \inu \| \feta \|_{\Omega}^2 + \tp \| \myeta - \heta \|_{\Eh}^2 + a \| \myeta \|_{\O}^2 + \Kp \| \heta \|_{\GR}^2 = (\partial_t  \myeta,\myeta)_\O + \Ap(\Delta,\Delta)
\\ = \underbrace{(\partial_t  \p_h - \dt \p,\myeta)_\O+\Ap(\Phi_h - \Phi,\Delta) }_{=b(u_h-u,\myeta )_\O}+(\partial_t  \p - \dt \PiHs{\tp}(\p,\fp),\myeta) +\underbrace{ \Ap(\Phi - \PiHDG^{\tp}(\Phi),\Delta)}_{\text{use \eqref{ortHDG2}}} = \\
b(u_h - u, \myeta)_\O +(\partial_t  \p - \dt \PiHs{\tp}(\p,\fp),\myeta) + (a (\p-\PiHs{\tp}(\p,\fp)),\myeta)_\Omega +  (\inu (\fp-\PiHv{\tp}(\p,\fp)),\feta)_{\Omega}\\
\leq
\frac{ 3 | b |^2} {2 a}\| u - u_h \|_{\O}^2  +
\frac  3 {2a} \| \partial_t  \p - \dt \PiHs{\tp}(\p,\fp) \|_{\O}^2 + 
\frac {3 a} 2 \| \p-\PiHs{\tp}(\p,\fp) \|_{\O}^2 + \frac \inu 2 \| \fp - \PiHv{\tp}(\p,\fp) \|_{\O}^2 \\
+
\frac { a} 2 \| \myeta \|^2 + \frac \inu 2 \| \feta \|_{\O}^2 
\end{multline*}
finally yielding
\begin{multline}
(\dt \myeta,\myeta)_\O  + \frac \inu 2 \| \feta \|_{\Omega}^2 + \tp \| \myeta - \heta \|_{\Eh}^2 + \frac a 2 \| \myeta \|_{\O}^2 + \Kp  \| \heta \|_{\GR}^2 \\
\leq
\frac{ 3 | b |^2} {2 a}\| u - u_h \|_{\O}^2 +
\frac  3 {2a} \| \partial_t  \p - \dt \PiHs{\tp}(\p,\fp) \|_{\O}^2  + \frac {3a} 2 \| \p-\PiHs{\tp}(\p,\fp) \|_{\O}^2 + \frac \inu 2 \| \fp - \PiHv{\tp}(\p,\fp) \|_{\O}^2.
\end{multline}
Integrating  in $(0,t)$ for any $t \leq T$ we obtain
\begin{multline*}
\frac 1 2\| \myeta(t) \|_{\O}^2 + \frac \inu 2  \int_0^t  \| \feta(s) \|_{\O}^2\,ds + \tp \int_0^t   \| \myeta(s) - \heta(s) \|_{\Eh}^2\,ds + \frac a 2 \int_0^t   \| \myeta(s) \|_{\O}^2\,ds + \int_0^t   \Kp \| \heta(s) \|_{\GR}^2\,ds\\
\leq	\frac 1 2\| \myeta(0) \|_{\O}^2 +
\frac{ 3 | b |^2} {2 a}
\int_{0}^t
\| u(s) - u_h(s) \|_{\O}^2 \,ds 
+ \frac 3{2a} \int_0^t   \|\dt \p(s) - \dt \PiHs{\tp}(\p(s),\fp(s)) \|_{\O}^2\,ds
\\+ \frac {3 a} 2 \int_0^t   \| \p(s)-\PiHs{\tp}(\p(s),\fp(s)) \|_{\O}^2 \,ds + \frac \inu 2 \int_0^t   \| \fp(s) - \PiHv{\tp}(\p(s),\fp(s)) \|_{\O}^2\,ds.
\end{multline*}
By the arbitrariness of $t \leq T$ we then conclude that for any $T'\leq T$
\begin{multline*}
\frac 1 2\| \myeta \|_{L^\infty(0,T';L^2(\O))}^2 
+ \frac \inu 2 \| \feta \|_{L^2(0,T';L^2(\O))}^2 + \tp \| \myeta - \heta \|_{L^2(0,T';L^2(\Eh))}^2  + \frac a 2 \| \myeta \|_{L^2(0,T';L^2(\O))}^2 + \Kp \| \heta \|_{L^2(0,T';L^2(\GR))}^2\\
\lesssim	\frac 1 2\| \myeta(0) \|_{\O}^2 +
\frac{ 3 | b |^2} {2 a}
\| u- u_h \|_{L^2(0,T';L^2(\O))}^2 + 
\frac 3{2a} \|\dt (\p -  \PiHs{\tp}(\p,\fp)) \|_{L^2(0,T';L^2(\O))}^2
\\+ \frac {3a} 2  \| \p-\PiHs{\tp}(\p,\fp) \|_{L^2(0,T';L^2(\O))}^2  + \frac \inu 2 \| \fp - \PiHv{\tp}(\p,\fp) \|_{L^2(0,T';L^2(\O))}^2.
\end{multline*}
We can now combine this estimate with the estimates on the HDG projection and we obtain the desired result by a triangle inequality.
\end{proof}

\subsection{Estimating  $U_h - \UIh$}
Let $\Tstarh \in J_h$ denote the largest time, in the discrete solution existence interval given by Proposition \ref{prop:local-existence},  for which	\begin{equation}\label{condTstar}	\| u_h \|_{L^\infty(0,\Tstarh;L^\infty(\O))} \leq  	\| u \|_{L^\infty(0,T;L^\infty(\O))} + 2=: \Cinftyu.\end{equation}
It is easy to see that, for $h$ sufficiently small we have that $\Tstarh > 0$. Indeed, by \eqref{boundpiLinfty}, there exists an $h_1$ such that for $h \leq h_1$  we have that
\[
\| u(0) - \PiO u(0) \|_{L^\infty(\Omega)} \leq 1,
\]
whence
\begin{gather*}
\| u_h (0) \|_{L^\infty(\Omega)} = \| \PiO u(0) \|_{L^\infty(\Omega)}\leq \| \PiO u(0) - u(0) \|_{L^\infty(\Omega)} + \| u(0) \|_{L^\infty(\Omega)} \leq 1 +  \| u \|_{L^\infty(0,T;L^\infty(\O))} .
\end{gather*}
By the continuity in time of the solution of equation \eqref{KS-HDG1}, we have that $\Tstarh > 0$ for all $h \leq h_1$.
We have the following bound.
\begin{lemma}\label{lem:keybound} For all $t \leq \Tstarh$ it holds that
\begin{multline}\label{bound3}
\| u_h(t) - \uIh(t) \|_{\O}^2  \lesssim 	h^{2(\sbar-\epsB)}	{\| u \|^2_{H^1(0,T;H^s(\O))} } + h^{2(\sigbar +\kappa-\epsB)} \| \fu \|^2_{H^1(0,T;H^\jsmooth (\O))}  \\ + h^{2({\rbar}-1)} \| \p \|_{H^1(0,T;H^{\psmooth}(\O))}^2
+  \int_0^{t } \| u_h(y) - \uI(y) \|^2_{\O}\,dy, 	
\end{multline}
 the implicit constant in the inequality depending on $\| \fp \|_{L^\infty(0,T;L^\infty(\O))}$ and $\| u \|_{L^\infty(0,T;L^\infty(
	\O))}$.
\end{lemma}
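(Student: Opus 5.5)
We plan to carry out an energy estimate for the discrete error $E = (e,\he,\fe) := U_h - \UIh \in \Vh$, tested against $E$ itself, following the structure used for $\Phi$ in Lemma~\ref{lemma4.1}. Set $\Theta = 0$ and $W = E$ in Problem~\ref{KS-HDG-compact}. The continuous solution satisfies the analogous relation $(\dt u,w)_\O + \Au(U,W) + \C(u,\fp;\fw) = \F(W,0)$ for $W\in\Vh$; this is consistency, obtained from \eqref{physequ} and the flux continuity identities. Subtracting the two relations and inserting the definition \eqref{pb:elliptic} of the elliptic projection, in the form $\Au(U-\UIh,W) = -M(u-\uI,w)_\O - \Cpsi(u-\uI,\fw)$, we obtain
\begin{multline*}
(\dt e, e)_\O + |E|_{\Au}^2 = -(\dt(\uI - u), e)_\O + M(u-\uI,e)_\O \\
- \bigl[\C(u_h,\fp_h;\fe) - \C(\uI,\fp;\fe)\bigr].
\end{multline*}
The first two terms on the right are bounded by Young's inequality and by \eqref{cellproj-u} and \eqref{cellproj-dt}, leaving $\|e\|_\O^2$ together with the squared $h$-terms of the first line of \eqref{bound3}. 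This is the point where the cellular projection pays off. In $\C(u_h,\fp_h;\fe) - \C(\uI,\fp;\fe)$ we write
\[
u_h\fp_h - \uI\fp = e\,\fp + u_h(\fp_h - \fp).
\]
The first piece is bounded by $\|\fp\|_{L^\infty}\|e\|_\O\|\fe\|_\O$. The second piece uses the definition of $\Tstarh$: for $t\le\Tstarh$ we have $\|u_h\|_{L^\infty}\le\Cinftyu$, so it is bounded by $\Cinftyu\|\fp-\fp_h\|_\O\|\fe\|_\O$. Both are absorbed through $\frac{\imu}{2}\|\fe\|_\O^2 \le \frac12|E|_{\Au}^2$, using \eqref{A.11}, leaving $\|e\|^2_\O + \|\fp-\fp_h\|_\O^2$.

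Next we integrate in time over $(0,t)$. For $\|e(0)\|_\O$ we note that $u_h(0) = \PiO u_0$ by \eqref{initials-HDG}, so $\|e(0)\|_\O \le \|u_0 - \PiO u_0\|_\O + \|u(0)-\uI(0)\|_\O$, which is controlled by \eqref{standardL2} and \eqref{cellproj-u}. We bound $\int_0^t\|\fp-\fp_h\|^2_\O$ by Lemma~\ref{lemma4.1}, which gives $h^{2(\rbar-1)}\|\p\|^2_{H^1(0,T;H^\psmooth)} + \int_0^t\|u-u_h\|_\O^2$. Finally, $\|u-u_h\|_\O \le \|u-\uI\|_\O + \|e\|_\O$, and the first term is again estimated by \eqref{cellproj-u}. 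Collecting all contributions gives \eqref{bound3}. The $H^1$-in-time norms enter through the pointwise-in-time bounds, using $H^1(0,T)\subset L^\infty(0,T)$.

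The main obstacle is the nonlinear coupling term, together with justifying the consistency relation for $U$ with the numerical flux $\hfu$. For the coupling term, the difficulty is to avoid a circular dependence on $\|\fp_h\|_{L^\infty}$. Choosing the splitting above, which puts the $L^\infty$ norm on the exact $\fp$ and on $u_h$ (bounded by $\Cinftyu$ up to $\Tstarh$), is exactly what makes the argument close. For consistency, one must check that $U$ inserted into $\Au$ reproduces the boundary terms. Since $u$ is continuous, $\PiE{\kj}(u-\hu)=0$, and the stabilization term vanishes. The remaining terms reduce to the PDE, the boundary conditions, and the single-valuedness of $\fu\cdot\n$ on interior edges, which requires $\jsmooth>1$ as assumed in \eqref{smoothnessj}.
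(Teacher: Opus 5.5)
Your proposal is correct and follows the paper's proof essentially step by step. It uses the same energy identity for $E=U_h-\UIh$, obtained from consistency and the orthogonality \eqref{pb:elliptic}, and the same splitting $u_h\fp_h-\uI\fp=e\,\fp+u_h(\fp_h-\fp)$, with $L^\infty$ bounds on $\fp$ and on $u_h$ (via $\Cinftyu$ up to $\Tstarh$). It closes the same way, with Lemma~\ref{lemma4.1} plus $u-u_h=(u-\uI)-e$.

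The paper merely applies Young's inequality to the coupling term after integrating in time rather than pointwise, and leaves $\|e(0)\|_\O$ implicit where you bound it explicitly. Your only slip is a harmless sign: the projection term should read $-M(u-\uI,e)_\O$, which does not matter since it is bounded in absolute value.
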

By applying Gronwall Lemma, thanks to the arbitrariness of $t$, this yields, in turn, the following corollary.
\begin{corollary}  It holds that
\begin{multline}\label{bound4}
\| u_h - \uIh \|^2_{L^\infty(0,\Tstarh;L^2(\O))} \lesssim h^{2(\sbar-\epsB)}	{\| u \|^2_{H^1(0,T;H^s(\O))} } +  h^{2(\sigbar +\kappa-\epsB)} \| \fu \|^2_{H^1(0,T;H^\jsmooth (\O))}\\ + h^{2({\rbar}-1)} \| \p \|_{H^1(0,T;H^{\psmooth}(\O))}^2.
\end{multline}
\end{corollary}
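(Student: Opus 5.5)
The corollary follows from Lemma~\ref{lem:keybound} through the integral form of Gronwall's lemma. Write
\[
y(t) := \| u_h(t) - \uIh(t) \|_{\O}^2, \qquad t\in[0,\Tstarh],
\]
and let $A_h$ be the $t$-independent part of the right-hand side of \eqref{bound3}:
\[
A_h := h^{2(\sbar-\epsB)}\| u \|^2_{H^1(0,T;H^s(\O))} + h^{2(\sigbar+\kappa-\epsB)} \| \fu \|^2_{H^1(0,T;H^\jsmooth(\O))} + h^{2(\rbar-1)} \| \p \|^2_{H^1(0,T;H^{\psmooth}(\O))}.
\]
Lemma~\ref{lem:keybound} then reads $y(t) \le C A_h + C\int_0^t y(\tau)\,d\tau$ for every $t\le \Tstarh$. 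The constant $C$ does not depend on $h$ or $t$; it depends only on the data, on $\| \fp \|_{L^\infty(0,T;L^\infty(\O))}$ and on $\| u \|_{L^\infty(0,T;L^\infty(\O))}$.

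First we check that Gronwall applies, i.e.\ that $y$ is continuous (or at least bounded and measurable) on $[0,\Tstarh]$.

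\emph{Regularity of $u_h$.} By Proposition~\ref{prop:local-existence}, $u_h\in C^1(J_h;\Poly{k}(\Th))$.

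\emph{Regularity of $\uIh$.} The problem \eqref{pb:elliptic} is a fixed finite-dimensional linear system at each time. Its only time-dependent inputs are:
\begin{itemize}
\item the right-hand side, built from $u\in H^1(0,T;H^s(\O))\subset C^0([0,T];H^s(\O))$ and from $\fu$;
\item the coefficient $\fp$ in $\Cpsi$, which lies in $W^{1,\infty}(0,T;L^\infty(\O)^2)$ by \eqref{eq:Linty-regularity}.
\end{itemize}
Well-posedness of \eqref{pb:elliptic} is uniform in $t$ for $M>M_0$. Hence the inverse depends continuously on $t$, and $\uIh\in C^0([0,T];\Poly{k}(\Th))$.

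The only subtle point is the endpoint $\Tstarh$, which may coincide with the right end of the maximal interval $J_h$. To handle it, apply the argument on $[0,T']$ for every $T'<\Tstarh$ and pass to the limit at the end. The resulting bound is uniform in $T'$, so this loses nothing.

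Gronwall's inequality gives $y(t)\le C A_h e^{Ct}\le C e^{CT} A_h$ for all $t\le T'$. Taking the supremum over $t\in[0,T']$, and then letting $T'\to\Tstarh$, yields \eqref{bound4}. The factor $e^{CT}$ is absorbed in the hidden constant because $T$ is fixed.

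I do not expect a genuine obstacle: all the analytic work is in Lemma~\ref{lem:keybound}. The one point worth stating explicitly is that the Gronwall constant must be independent of $h$. This holds because the constant in \eqref{bound3} is uniform in $h$, which in turn relies on the a priori $L^\infty$ bound \eqref{condTstar} valid up to $\Tstarh$.
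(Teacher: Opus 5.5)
Your proposal is correct and follows the paper's argument: the paper also obtains the corollary by applying Gronwall's lemma to \eqref{bound3}, with a $t$-independent data term and an $h$-independent constant, and then uses the arbitrariness of $t\le\Tstarh$. Your additional checks (continuity in time of $u_h$ and $\uIh$, and the limit $T'\to\Tstarh$) are not spelled out in the paper but are harmless. The endpoint precaution is actually unnecessary, since $\Tstarh\in J_h$ by definition and \eqref{bound3} already holds for all $t\le\Tstarh$.
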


\begin{proof}[Proof of Lemma \ref{lem:keybound}]
	Let
\[
\ConstPsi:=
|\inu\ctsens|\,\|\fp\|_{L^\infty(0,T;L^\infty(\O))}.\] By the definition of $\C$, we have, for $v \in L^2(\O)$, $\fv \in (L^2(\O))^2$
\begin{equation}\label{eq:continuityC}
|\C(v,\fp;\fv)|
\le
\ConstPsi\|v\|_{\O}\|\fv\|_{\O}.
\end{equation}
Now, for $E = (e,\he,\fe)$ with $e = u_h - \uI$, $\he = \hu_h - \huI$, $\fe = \fu_h - \fuI$, we have
\begin{multline*}
\frac 12 \partial_t  \| e \|_{\O}^2 + \alpha \| \fe \|_{\O}^2 + 
\tu h^{-\kappa}
{\|\PiE{\kj}(e - \he) \|_{\Eh}^2} + \Ku \| \he \|_{\GR}^2 =	(\dt e,e) + \Au(E,E)\\ =
(\dt u_h - \dt u,e)   + (\dt u - \dt \uI,e) + \Au( U_h-U,E ) + \Au(U - \UIh,E)\\ \pm \C(u_h,\fp_h;\fe) \pm \C(u_h,\fp;\fe) \pm \C(u,\fp;\fe) \pm \C(\uI,\fp;\fe) \pm M\,(u,e) \pm M\,(\uI,e) 
\\= 	\underbrace{(\dt u_h,e) 
+	 \Au( U_h,E)
+ \C(u_h,\fp_h;\fe) }_{=(f_u,e) + (g^R_u,\he)_{\GR}{+ (\gNu,\he)_{\GN}}}
\underbrace{- (\dt u,e)   
-\Au(U,E ) 
- \C(u,\fp;\fe) }_{= - (f_u,e)_\O - (g^R_u,\he)_{\GR} {- (\gNu,\he)_{\GN}}}
\\		\underbrace{+ M\,(u - \uI,e) 	 + \Au(U - \UIh,E )	+ \C(u - \uI,\fp;\fe)}_{=0}\\
+	(\dt (u - \uI),e) + M\,(\uI - u,e)  - \C(e,\fp;\fe)
+ \C(u_h, \fp - \fp_h;\fe) 
\\
= (\dt (u - \uI),e) + M\,(\uI - u,e)  - \C(e,\fp;\fe)
+ \C(u_h, \fp - \fp_h;\fe) 
\\
\leq \| \dt u - \dt \uI \|_{\O} \| e \|_{\O}   +
M \| \uI - u \|_{\O} \| e  \|_\O + \ConstPsi \| e \|_\O \| \fe \|_\O
+ | \C(u_h,\fp - \fp_h ;\fe)|  \\
\leq 
\| \dt u - \dt \uI \|^2_{\O} +  {M^2} \| \uI - u \|_{\O}^2 + \frac 1 2 \| e \|^2_{\O} +  \frac \alpha 4 \| \fe \|^2 + \frac {\ConstPsi^2} \alpha \| e \|^2_\O +
|\C(u_h,\fp_h - \fp;\fe)|.
\end{multline*}
Subtracting $\alpha \| \fe \|_\Omega^2/4$ from both sides and integrating in $(0,t)$ we obtain
\begin{multline}
\frac 12 \| e(t) \|_{\O}^2 + \frac 3 4 \alpha \| \fe \|_{L^2(0,t;L^2(\O))}^2 \\ + \tu  h^{-\kappa} \|\PiE{\kj}(e - \he) \|^2_{L^2(0,t;L^2(\Eh))} + \| \he \|_{L^2(0,t;L^2(\GR))}^2\\ \lesssim \frac 12 \| e(0) \|_{\O}^2 +
\| \dt u - \dt \uI \|^2_{L^2(0,t;L^2(\O))} 
+  {M^2}  \| \uI - u \|_{L^2(0,t;L^2(\O))}^2
\\ + \int_0^{t} \| e \|^2_{\O}  +
\int_0^{t} | \C(u_h,\fp - \fp_h;\fe)|,
\end{multline}
where the implicit constant depends on $\| \fp \|_{L^\infty(0,T;L^\infty(\O))}$.

\

We now claim that for all $t \leq \Tstarh$ it holds that
\begin{multline}\label{boundC}
\int_0^{t } | \C(u_h,\fp - \fp_h;\fe)  | \\
\leq \epsilon \| \fe \|^2_{L^2(0,t;L^2(\O))}   
+\frac {C(u)} \epsilon \left( h^{2(
\rbar-1)} \| \p \|^2_{H^1(0,t;H^\psmooth(\O))} + \| u - u_h  \|^2_{L^2(0,t;L^2(\O))} \right)
\end{multline}
with $C(u)\simeq\beta^2\ctsens^2(\Cinftyu)^2$ depending on  $	\| u \|^2_{L^\infty(0,T;L^\infty(\O))}$ and $\epsilon > 0$ arbitrary. Provided \eqref{boundC} holds, choosing $\epsilon = \alpha/4$ and subtracting $\alpha/4 \| \fe \|_{L^2(0,t;L^2(\O))}^2$ from both sides, and   using \eqref{standardL2}, Lemma~\ref{lem:cellular-projection} as well as $\tu \simeq 1$ we then obtain
\begin{multline}\label{A.48}
\frac 12 \| e(t) \|_{\O}^2 + \frac 1 2 \alpha \| \fe \|_{L^2(0,t;L^2(\O))}^2 + h^{-\kappa} \|\PiE{\kj}(e - \he) \|^2_{L^2(0,t;L^2(\Eh))} + \| \he \|_{L^2(0,t;L^2(\GR))}^2\\ 
\lesssim h^{2(\sbar-\epsB)} \| u \|^2_{H^1(0,T;H^s(\O))} + h^{2(\sigbar +\kappa-\epsB)} \| \fu \|^2_{H^1(0,T;H^\jsmooth (\O))}
\\	+  \int_0^{t } \| e \|^2_{\O} 
+\frac {4 C(u)} \alpha h^{2(\rbar -1)} \| \p \|_{H^1(0,t;H^\psmooth(\O))}^2,
\end{multline}
which implies the desired result, where we used that $ u - u_h = u - \uIh - e$ and Lemma \ref{lem:cellular-projection}.

It remains to prove \eqref{boundC}. For $t' \in [0,t]$, with $t \leq \Tstarh$, so that \eqref{condTstar} holds for $T' = t$
we can write
\begin{gather*}
|	\C(u_h(t'),\fp(t') - \fp_h(t');\fe(t'))| 
\leq 
	\inu |\ctsens |	\| u_h(t') \|_{L^\infty(\O)} 	\| \fp(t') - \fp_h(t') \|_{\O}  \| \fe(t') \|_{\O}.
\end{gather*}
We integrate between $0$ and $t$ and, using \eqref{condTstar}, we obtain, for $\epsilon > 0$ arbitrary
\begin{multline}\label{bound1}
\int_0^{t} |	\C(u_h(t'),\fp(t') - \fp_h(t');\fe(t'))| \,dt'\\
\leq \inu | \ctsens | 
\Cinftyu
\int_0^{t } 
\| \fp(t') - \fp_h(t'	) \|_{\O}  \| \fe(t') \|_{\O} \,dt'\\
\leq \inu | \ctsens | 
\Cinftyu\| \fp- \fp_h \|_{L^2(0,t;L^2(\O))} \| \fe \|_{L^2(0,t;L^2(\O))}\\ \leq
\epsilon \| \fe \|_{L^2(0,t;L^2(\O))}^2 +  \frac {\inu^2} {4 \epsilon}   | \ctsens |^2 
(\Cinftyu)^2 \| \fp- \fp_h \|^2_{L^2(0,t;L^2(\O))}.
\end{multline}
Using Lemma \ref{lemma4.1} to bound the last term in \eqref{bound1} yields \eqref{boundC}.
\end{proof}

\subsection{The continuation argument} We finally prove that, for $h$ sufficiently small, we have that $\Tstarh = T$. This is the object of the following lemma.
\begin{lemma}\label{lem:continuation}
There exists $h^*>0$ such that for all $h \le h^*$ it holds that $\Tstarh = T$.
\end{lemma}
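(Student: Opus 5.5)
We argue by contradiction, combining the $L^2$ bound \eqref{bound4} with an inverse inequality to control $\|u_h\|_{L^\infty}$ on $[0,\Tstarh]$. Suppose $\Tstarh<T$. Proposition~\ref{prop:local-existence} says that $u_h$ is continuous in time with values in $\Poly{k}(\Th)$, and that it stays bounded in $L^\infty$ up to any time at which \eqref{condTstar} holds. Hence, by maximality of $\Tstarh$ (and the blow-up alternative \eqref{eq:blowup-alternative}, which excludes $\Tstarh=T_h$ while $u_h$ is bounded), we must have $\|u_h(\Tstarh)\|_{L^\infty(\O)}=\Cinftyu=\|u\|_{L^\infty(0,T;L^\infty(\O))}+2$. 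It therefore suffices to show that, for $h$ small, $\|u_h(t)\|_{L^\infty(\O)}\le \|u\|_{L^\infty(0,T;L^\infty(\O))}+1$ for every $t\le \Tstarh$.

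For $t\le\Tstarh$ we split
\[
u_h(t)=\bigl(u_h(t)-\uIh(t)\bigr)+\bigl(\uIh(t)-\PiO u(t)\bigr)+\bigl(\PiO u(t)-u(t)\bigr)+u(t).
\]
The third term is bounded by \eqref{boundpiLinfty}: $\|u-\PiO u\|_{L^\infty}\lesssim h^{\min\{s-1,k\}}\|u\|_{H^s}$, which tends to zero since $s>2$. The first two terms are discrete functions in $\Poly{k}(\Th)$, so on a quasi-uniform mesh in two dimensions the inverse inequality $\|v_h\|_{L^\infty(\O)}\lesssim h^{-1}\|v_h\|_{\O}$ applies. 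For the first term we use the corollary \eqref{bound4}, whose constant does not depend on $\Tstarh$ (only on $\Cinftyu$, $\fp$, and the data). This gives $\|u_h-\uIh\|_{L^\infty(0,\Tstarh;L^\infty)}\lesssim h^{-1}\Rrate(h)$. For the second term we combine \eqref{cellproj-u} with \eqref{standardL2} to get $\|\uIh-\PiO u\|_{\O}\lesssim h^{\sbar-\epsB}+h^{\sigbar+\kappa-\epsB}$, and again lose one power of $h$.

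The remaining task, and the main obstacle, is to check that $h^{-1}\Rrate(h)\to 0$, i.e.\ that every exponent in $\Rrate$ exceeds $1$:
\begin{itemize}
\item $\sbar-\epsB>1$ holds since $s>2$, $k\ge1$, and $\epsB<1/2$.
\item $\rbar-1>1$ holds since $\psmooth>2$ and $k+2\ge3$.
\item For $\sigbar+\kappa-\epsB$: if $\kj=k$ then $\kappa=0$ and $\sigbar=\min\{\jsmooth,k+1\}>1$, with $\epsB=0$. If $\kj=k-1\ge1$, then $\sigbar+1-\epsB>3/2$. The delicate case is $k=1$, $\kj=0$, where $\sigbar=\min\{\jsmooth,1\}=1$ and so $\sigbar+\kappa-\epsB=2-\epsB>1$.
\end{itemize}
So the extra hypothesis $\jsmooth\ge 3/2-\epsB$ from Theorem~\ref{thm:KSerror} does not seem needed here. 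If the exponents turn out tighter than I expect, a fallback is to replace the inverse estimate by the sharper two-dimensional bound $h^{-1}|\log h|^{1/2}$ or to use the $H^1$-type norms available in \eqref{A.48}.

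Finally, we choose $h^*\le\min\{h_0,h_1\}$ (with $h_0$ from Lemma~\ref{lem:cellular-projection} and $h_1$ from the positivity argument for $\Tstarh$) small enough that the sum of the three error contributions is at most $1$. For $h\le h^*$ this gives $\|u_h(\Tstarh)\|_{L^\infty}\le\|u\|_{L^\infty(0,T;L^\infty(\O))}+1<\Cinftyu$, contradicting the equality derived above. Hence $\Tstarh=T$. A point to verify carefully along the way is that all hidden constants in \eqref{bound4} are independent of $\Tstarh$ and of $h$, which holds because they depend only on $\Cinftyu$ and on norms of the exact solution over $[0,T]$.
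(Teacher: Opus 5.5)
Your proof is correct and follows the paper's argument: you use the same splitting of $u_h$ at $\widehat T_h$ through $u_h^I$ and $\Pi^0_{\mathcal T_h}u$, the two-dimensional inverse inequality combined with \eqref{bound4} and Lemma~\ref{lem:cellular-projection}, and the same check that every exponent in $\mathcal{R}_{k',k}$ exceeds $1$, so that $h^{-1}\mathcal{R}_{k',k}(h)\to 0$. The differences are cosmetic. You fix $h\le h^*$ directly instead of extracting a sequence $h_n\to0$, and you cite the blow-up alternative \eqref{eq:blowup-alternative} explicitly to get $\|u_h(\widehat T_h)\|_{L^\infty(\Omega)}=\|u\|_{L^\infty(0,T;L^\infty(\Omega))}+2$; the $|\log h|^{1/2}$ fallback is not needed.
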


\begin{proof}We once again apply the reasoning of  \cite{chen2019hdg}, which proves the result by contradiction.
If the statement of the lemma is false, there exists a sequence $h_n \to 0$ such that $\Tstarhn < T$.	By definition of $\Tstarhn$ we have that
\begin{equation*}
\| u_{h_n}(t) \|_{L^\infty(\Omega)} 
\leq \| u \|_{L^\infty(0,T;L^\infty(\O))} + 2, \quad t \le \Tstarhn,
\end{equation*}
and, for values of $t > \Tstarhn$ arbitrarily close to $\Tstarhn$
\begin{gather*}
\| u_{h_n}(t) \|_{L^\infty(\Omega)} > \| u \|_{L^\infty(0,T;L^\infty(\O))} + 2.
\end{gather*}
Then, by the continuity in $t$ of the solution to equation \eqref{KS-HDG1} we have
\[
\| u_{h_n}(\Tstarhn) \|_{L^\infty(\Omega)} = \| u \|_{L^\infty(0,T;L^\infty(\O))} + 2.
\]
At $t=\Tstarhn$, the inverse inequality in two
space dimensions, Lemma~\ref{lem:cellular-projection}, and
\eqref{bound4} give
\begin{equation*}
	\|u_{h_n}(t)\|_{L^\infty(\O)}
	\le \|u(t)\|_{L^\infty(\O)}
	+C h_n^{-1}\Rrate(h_n)
	+\|u(t)-u_{h_n}^I(t)\|_{L^\infty(\O)}.
\end{equation*}
Since $s>2$, $\jsmooth+\kappa>1$, and $\psmooth>2$, we have
\[
\min\{\sbar-\epsB,\,\sigbar+\kappa-\epsB,\,\rbar-1\}>1,
\]
and hence
\[
h_n^{-1}\Rrate(h_n)
\lesssim
h_n^{\sbar-1-\epsB}
+h_n^{\sigbar+\kappa-1-\epsB}
+h_n^{\rbar-2}
\longrightarrow0.
\]
Moreover, inserting the elementwise $L^2$ projection of $u$ in the last
term and using the approximation and inverse inequalities gives
\[
\|u(t)-u_{h_n}^I(t)\|_{L^\infty(\O)}
\lesssim
h_n^{\sbar-1}
+h_n^{-1}\Rrate(h_n)
\longrightarrow0.
\]
This contradicts the defining equality
\[
\|u_{h_n}(\Tstarhn)\|_{L^\infty(\O)}
=
\|u\|_{L^\infty(0,T;L^\infty(\O))}+2.
\]
Hence $\Tstarh=T$ for all sufficiently small $h$.
\end{proof}

Lemma \ref{lem:continuation} implies that, for $h\leq h^*$ the maximal discrete solution existence interval $J_h$ satisfies $J_h = [0,T]$. Indeed, if $J_h = [0,T_h)$ with $T_h \leq T$, necessarily $\| u_h(t) \|_{L^\infty(\O)}$ must blow up for $t$ going to $T_h$, and, consequently $\Tstarh < T_h$, which contradicts $\Tstarh = T$.

\

For $h \le  h^*$, we can  set $\Tstar = \Tstarh = T$ in bounds \eqref{bound3} and \eqref{bound4}, and  we obtain the following corollary.
\begin{corollary}  For $h \leq h^*$ it holds that
\begin{multline}\label{bound4bis}
\| u_h - \uIh \|^2_{L^\infty(0,T;L^2(\O))} \lesssim h^{2(\sbar-\epsB)}	{\| u \|^2_{H^1(0,T;H^s(\O))} } \\+  h^{2(\sigbar +\kappa-\epsB)} \| \fu \|^2_{H^1(0,T;H^\jsmooth (\O))} + h^{2({\rbar}-1)} \| \p \|_{H^1(0,T;H^{\psmooth}(\O))}^2.
\end{multline}
\end{corollary}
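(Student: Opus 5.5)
The statement is the final corollary: the bound \eqref{bound4} with $\Tstarh$ replaced by $T$ for $h\le h^*$. The plan is to chain the already established results; essentially no new estimate is needed.

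First, by Lemma~\ref{lem:continuation}, for every $h\le h^*$ we have $\Tstarh=T$. By the remark following that lemma, the maximal existence interval is then $J_h=[0,T]$, so the discrete solution exists on the whole interval and satisfies $\|u_h\|_{L^\infty(0,T;L^\infty(\O))}\le \Cinftyu$. Consequently the hypothesis under which Lemma~\ref{lem:keybound} was proved, namely \eqref{condTstar} with $T'=t$, holds for every $t\le T$. The bound \eqref{boundC} on the nonlinear term $\C(u_h,\fp-\fp_h;\fe)$ is therefore valid up to $T$.

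Second, we apply Lemma~\ref{lem:keybound} for every $t\in[0,T]$. This gives \eqref{bound3}, of the form $\|e(t)\|_\O^2\le A_h + C\int_0^t\|e(y)\|_\O^2\,dy$, where $e=u_h-\uIh$ and $A_h$ collects the three $h$-dependent terms. The constant $C$ depends only on $\|\fp\|_{L^\infty L^\infty}$, $\|u\|_{L^\infty L^\infty}$ and the model parameters, not on $h$ or $t$. Gronwall's lemma in integral form then yields $\|e(t)\|_\O^2\le A_h e^{CT}$ for all $t\le T$. Taking the supremum over $t\in[0,T]$ gives \eqref{bound4bis}, with $e^{CT}$ absorbed into the hidden constant.

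The only point requiring care is the consistency of the argument, not any computation. The term $\|u-u_h\|_{L^2(0,t;L^2)}$ produced by Lemma~\ref{lemma4.1} inside \eqref{boundC} must already have been split as $\|u-\uIh\|+\|e\|$. The first piece is bounded by Lemma~\ref{lem:cellular-projection}, and the second is absorbed into the Gronwall integral, as done in the proof of Lemma~\ref{lem:keybound}. One must also check that $h^*$ is at most $h_0$ and $h_1$, so that Lemma~\ref{lem:cellular-projection} and the positivity of $\Tstarh$ both apply. This is guaranteed by how $h^*$ was chosen in Lemma~\ref{lem:continuation}.
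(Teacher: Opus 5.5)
Your proposal is correct and follows the paper's route: Lemma~\ref{lem:continuation} gives $\Tstarh=T$ for $h\le h^*$, so \eqref{bound3} holds on all of $[0,T]$, and Gronwall's lemma (equivalently, \eqref{bound4} with $\Tstarh=T$) yields \eqref{bound4bis}. One small correction: the contradiction argument in Lemma~\ref{lem:continuation} does not explicitly choose $h^*\le\min\{h_0,h_1\}$, so this is not guaranteed by that argument as you claim, but you may always shrink $h^*$ to ensure it.
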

Combining this estimate with Lemma \ref{lemma4.1} and with the HDG projection bounds, by triangle inequality and using \eqref{4.22b} and the approximation properties of $\PiE{k}$, we  obtain the following bound.
\begin{corollary}\label{cor:phi-bound}
For $h \le h^*$ it holds that
\begin{gather}
\| \p - \p_h \|_{L^\infty(0,T;L^2(\O))} + \| \fp-\fp_h \|_{L^2(0,T;L^2(\O))} + h^{1/2} \| \p_h - \hp_h \|_{L^2(0,T;L^2(\Eh))}
{\lesssim  \Rrate(h).}
\end{gather}
\end{corollary}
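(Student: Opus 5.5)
The corollary follows by a triangle inequality through the HDG projection, with Lemma~\ref{lemma4.1} providing the discrete error. The coupling term $\|u-u_h\|$ in that lemma is closed using \eqref{bound4bis} and Lemma~\ref{lem:cellular-projection}.

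\textbf{Step 1: the cellular error.} First I bound $\|u-u_h\|_{L^2(0,T;L^2(\O))}$. Writing $u-u_h=(u-\uIh)+(\uIh-u_h)$, the first piece is controlled uniformly in time by \eqref{cellproj-u}. After squaring and integrating over $(0,T)$, this gives $h^{\sbar-\epsB}\|u\|_{L^2(0,T;H^s)}+h^{\sigbar+\kappa-\epsB}\|\fu\|_{L^2(0,T;H^\jsmooth)}$, which is bounded by the $H^1(0,T;\cdot)$ norms. The second piece is bounded in $L^\infty(0,T;L^2)$, hence in $L^2(0,T;L^2)$ up to a factor $T^{1/2}$, by \eqref{bound4bis}, valid for $h\le h^*$. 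Therefore $\|u-u_h\|_{L^2(0,T;L^2(\O))}\lesssim\Rrate(h)$.

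\textbf{Step 2: the volume errors in $\p$ and $\fp$.} With $T'=T$ allowed, since $J_h=[0,T]$ for $h\le h^*$ by Lemma~\ref{lem:continuation}, Lemma~\ref{lemma4.1} yields
\[
\|\p-\p_h\|_{L^\infty(0,T;L^2)}+\|\fp-\fp_h\|_{L^2(0,T;L^2)}\lesssim h^{\rbar-1}\|\p\|_{H^1(0,T;H^\psmooth)}+\Rrate(h)\lesssim\Rrate(h).
\]

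\textbf{Step 3: the jump term.} Write
\[
\p_h-\hp_h=\bigl[(\p_h-\hp_h)-(\PiHs{\tp}(\p,\fp)-\PiE{k}\hp)\bigr]+\bigl(\PiHs{\tp}(\p,\fp)-\p\bigr)+\bigl(\hp-\PiE{k}\hp\bigr),
\]
using $\p|_{\Eh}=\hp$. Each term is then handled separately:
\begin{enumerate}
\item The first bracket is bounded in $L^2(0,T;L^2(\Eh))$ by Lemma~\ref{lemma4.1} together with $\tp\simeq1$, giving $\Rrate(h)$.
\item The second term, by \eqref{4.22b}, contributes $h^{\rbar-3/2}\|\p\|$ before the $h^{1/2}$ weight.
\item The third term, by standard approximation of the trace projection, satisfies $\|\hp-\PiE{k}\hp\|_{\Eh}\le\|\p-\PiHs{\tp}(\p,\fp)\|_{\Eh}$. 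This holds because $\PiE{k}$ is the best approximation on each edge and $\PiHs{\tp}(\p,\fp)|_{\partial K}\in\Poly{k}(\EhK)$, so it reduces to the same bound as the second term.
\end{enumerate}
Multiplying by $h^{1/2}$ gives $h^{\rbar-1}\|\p\|_{H^1(0,T;H^\psmooth)}\lesssim\Rrate(h)$.

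The only point requiring care is the bookkeeping of the time norms. The bounds need $H^1(0,T;\cdot)$ (or $L^\infty$) norms of $u$, $\fu$, $\p$ and not merely pointwise-in-time ones, and the $\|\p\|_{H^1(0,T';\cdot)}$ quantities from Lemma~\ref{lemma4.1} must be checked to be dominated by the corresponding $T$-norms. I expect no genuine obstacle: the hard work was done in \eqref{bound4bis} and Lemma~\ref{lem:continuation}.
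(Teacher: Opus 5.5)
Your proposal is correct and follows the paper's argument. You close the $\|u-u_h\|$ term of Lemma~\ref{lemma4.1} via \eqref{bound4bis} and \eqref{cellproj-u}, and then handle the jump term by the triangle inequality through the HDG projection, \eqref{4.22b}, and the approximation properties of $\PiE{k}$. Your edgewise best-approximation argument for $\|\hp-\PiE{k}\hp\|_{\Eh}\le\|\p-\PiHs{\tp}(\p,\fp)\|_{\Eh}$ is a clean way to make that last step explicit.
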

Additionally, bound \eqref{A.48} yields
\begin{corollary} For $h \leq h^*$ it holds that
	{
\begin{gather}
\| \fu_h-\fuI \|_{L^2(0,T;L^2(\O))}^2
+ h^{-\kappa} \|\PiE{\kj}(u_h - u_h^I ) - \hu_h + \hu_h^I \|^2_{L^2(0,T;L^2({\Eh}))} 
\lesssim   \Rrate(h)^2.
\end{gather} }
\end{corollary}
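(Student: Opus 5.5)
The plan is to read this final corollary off the energy inequality \eqref{A.48}. That inequality was derived in the proof of Lemma~\ref{lem:keybound} for $E=(e,\he,\fe)=U_h-\UIh$ and every $t\le\Tstarh$, and Lemma~\ref{lem:continuation} gives $\Tstarh=T$ for $h\le h^*$. Since $\PiE{\kj}$ is linear and acts as the identity on the single-valued traces $\hu_h,\huIh\in\Poly{\kj}(\Eh)$, we have
\[
\PiE{\kj}(u_h-u_h^I)-\hu_h+\hu_h^I=\PiE{\kj}(e-\he).
\]
The left-hand side of the claimed bound is therefore, up to the factors $\alpha/2$ and $\tu\simeq1$ (both independent of $h$), the second and third terms on the left of \eqref{A.48} evaluated at $t=T$.

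Next I bound the right-hand side of \eqref{A.48} at $t=T$.

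\emph{First two terms.} $h^{2(\sbar-\epsB)}\|u\|^2_{H^1(0,T;H^s(\O))}+h^{2(\sigbar+\kappa-\epsB)}\|\fu\|^2_{H^1(0,T;H^\jsmooth(\O))}$ is bounded by $\Rrate(h)^2$ by the definition \eqref{def:Rkk}.

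\emph{Last term.} $\frac{4C(u)}{\alpha}h^{2(\rbar-1)}\|\p\|^2_{H^1(0,T;H^\psmooth(\O))}$ is also $\lesssim\Rrate(h)^2$. The constant $C(u)\simeq\beta^2\ctsens^2(\Cinftyu)^2$ depends only on $\|u\|_{L^\infty(0,T;L^\infty(\O))}$ and the model parameters, which the theorem permits.

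\emph{Remaining term.} For $\int_0^T\|e\|^2_\O$, I use $\int_0^T\|e\|_\O^2\le T\|e\|^2_{L^\infty(0,T;L^2(\O))}$ together with \eqref{bound4bis}, whose right-hand side is exactly $\Rrate(h)^2$.

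One point needs checking: in deriving \eqref{A.48}, the term $\|u-u_h\|^2_{L^2(0,t;L^2(\O))}$ from \eqref{boundC} was absorbed using $u-u_h=(u-\uIh)-e$, Lemma~\ref{lem:cellular-projection}, and an $\int_0^t\|e\|^2$ contribution. This step is legitimate on $[0,T]$ since $t=T\le\Tstarh$. Similarly, the initial term $\|e(0)\|_\O=\|\PiO u_0-\uIh(0)\|_\O$ is controlled by the triangle inequality and \eqref{standardL2} together with \eqref{cellproj-u}, and it is also $\lesssim\Rrate(h)$. I expect this consistency check to be the only delicate step; it just requires making sure the hidden constants depend only on $T$, the data, $\|\fp\|_{L^\infty}$ and $\|u\|_{L^\infty}$, and not on $h$. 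Collecting the terms gives the claim.
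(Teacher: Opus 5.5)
Your proposal is correct and follows the paper's own argument. The paper reads this corollary directly off \eqref{A.48} at $t=T$, which is legitimate because $\Tstarh=T$ for $h\le h^*$. It uses the identity $\PiE{\kj}(u_h-u_h^I)-\hu_h+\hu_h^I=\PiE{\kj}(e-\he)$ and controls the Gronwall term $\int_0^T\|e\|_\O^2$ by \eqref{bound4bis}, exactly as you do. Your extra checks on $\|e(0)\|_\O$ and on the absorption of $\|u-u_h\|^2_{L^2(0,t;L^2(\O))}$ simply make explicit steps the paper leaves implicit.
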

{
The chemoattractant estimate and \eqref{main1} follow by combining the
preceding corollaries with the projection bounds.  For the cellular flux, we
use
\[
\fu-\fu_h=(\fu-\fuI)-(\fu_h-\fuI).
\]
Lemma~\ref{lem:cellular-projection} gives
\[
\|\fu-\fuI\|_{L^2(0,T;L^2(\O))}
\lesssim
h^{\sbar-\kappa}\|u\|_{H^1(0,T;H^s(\O))}
+h^{\sigbar}\|\fu\|_{H^1(0,T;H^\jsmooth(\O))},
\]
whereas the last corollary controls $\fu_h-\fuI$.  Since $0<h<h^*$ and
$0\le\epsB<1/2$, the higher powers
$h^{\sbar-\epsB}$ and $h^{\sigbar+\kappa-\epsB}$ are absorbed into the
corresponding $h^{\sbar-\kappa}$ and $h^{\sigbar}$ terms with a constant independent
of $h$ (for $\kj=k$ the two rates coincide and no absorption is needed).  This proves \eqref{main2}.  Finally,
\[
\PiE{\kj}(u_h-\hu_h)=\PiE{\kj}(u_h-\uIh)+\PiE{\kj}(\uIh-\huIh)+\PiE{\kj}(\huIh-\hu_h).
\]
The last corollary controls the sum of the first and third terms through
$h^{-\kappa/2}\|\PiE{\kj}(e-\he)\|_{\Eh}$ (up to the uniformly bounded factor $h^{(1-\kappa)/2}\le1$), whereas \eqref{cellproj-hu} controls the middle
term with the energy-order rate.  The triangle inequality therefore gives
\eqref{main1-jump}, while \eqref{main1-jump-phi} follows from Corollary~\ref{cor:phi-bound}.  This completes the proof of
Theorem~\ref{thm:KSerror}.}


\subsection{Proof of Lemma \ref{lem:cellular-projection} }\label{sec:proof_cell_proj}
Letting $\ConstPsi = | \inu \widetilde \chi | \| \fp \|_{L^\infty(0,T;L^\infty(\O))}$, the coercivity on $\Vh$ of the bilinear form 
\[
M(u,w)_\Omega + \Au(U,W) + \Cpsi(u,\fw)
\]
for $M > 4 \ConstPsi^2/\imu$  descends directly from \eqref{A.5} and \eqref{A.11}. Existence and uniqueness of the solution immediately follow.
 To bound \eqref{cellproj-u} and \eqref{cellproj-j},
we observe 
\begin{equation}\label{elliptic_orthogonality}
	M(u-u^I_h,w) + \Au(U-U^I_h,W) + \Cpsi(u-u^I_h,\fw) = 0, \quad \text{for all }W = (w,\hw,\fw) \in \Vh.
\end{equation}
Letting $U^\pi = (u^\pi,\hu^\pi,\fu^\pi) \in \Vh$ denote an approximation to the true solution, and letting $E = (e, \he,\fe) = U^\pi - U_h^I$ we can write
\begin{multline*}
	M \| e \|_{\O}^2 +  | E |_{\Au}^2 = M(e,e)_{\O} + \Au(E,E) =
	M (u^\pi - u, e)_\O + \Au(U^\pi-U,E)
	\\ 	 + M ( u - \uIh,e) 
	+ \Au(U - \UIh,E)	\\ 	=	 M (u^\pi-u,e)_\O + \Au(U^\pi-U,E) - \Cpsi(u-\uIh,\fe) = I + II + III.
\end{multline*}
We  then need to bound $I$, $II$ and $III$. As already anticipated, we use different arguments for the two cases $\kj = k$ and $\kj = k-1$. We will start by proving  \eqref{cellproj-u} and \eqref{cellproj-j} for $\kj=k$. We will then prove \eqref{cellproj-j} for $\kj=k-1$ by a similar argument which will not however yield, in this case, the desired bound \eqref{cellproj-u}. To prove the latter we will resort to an Aubin--Nitsche type argument. 

\begin{proof}[Proof of \eqref{cellproj-u} -- \eqref{cellproj-hu} for $\kj=k$]
	
In this case $\kappa=0$, $\epsB=0$, and $\Vh=\Sh$.
We use the stabilization-parametric HDG projector defined in
\eqref{proj1}--\eqref{proj3}, now with $\gamma=\tu$, and set
\begin{gather*}
\upi = \PiHs{\tu}(u,\fu), \qquad \hupi = \PiE{k}u, \qquad \fpiu = \PiHv{\tu}(u,\fu)
\\ U^\pi =
\PiHDG^{\tu}(U) =\bigl(\PiHs{\tu}(u,\fu),\PiE{k}u,
\PiHv{\tu}(u,\fu)\bigr).
\end{gather*}
Moreover, exactly as in Proposition \ref{prop:HDGproj} we have that, for $W = (w,\hw, \fw) \in \Vh$ 
	\begin{equation}\label{cancellation}	\Au(U - \PiHDG^{\tu}(U),W)	=    (\imu(\fu-\PiHv{\tu}(u,\fu)),\fw)_{\Omega}.\end{equation}

Under Assumption \ref{ass:smoothness}, using  \eqref{proj1}--\eqref{proj3}, with $\gamma=\tu$ gives
\begin{gather}
	\|u- \upi \|_{\O}
	\lesssim
	h^{\min\{s,k+1\}}\|u\|_{H^s(\O)}
	+h^{\min\{\jsmooth,k+1\}}\|\fu\|_{H^\jsmooth(\O)},
	\label{eq:HDGprojection-kk-u}\\
	\|\fu-\fpiu\|_{\O}
	\lesssim
h^{\min\{s,k+1\}}\|u\|_{H^s(\O)}
+h^{\min\{\jsmooth,k+1\}}\|\fu\|_{H^\jsmooth(\O)},
	\label{eq:HDGprojection-kk-j}
\end{gather}

{Moreover, using a triangle inequality  and \eqref{boundEhPro}, we have
\begin{multline}\label{eq:HDGprojection-kk-hu}
h^{1/2}	\| u^\pi - \hu^\pi \|_{\Eh} \leq h^{1/2} \| u^\pi - u \|_{\Eh} + h^{1/2}\| u - \hu^\pi \|_{\Eh} \\
	\lesssim h^{\min\{s,k+1\}}\|u\|_{H^s(\O)}
	+h^{\min\{\jsmooth,k+1\}}\|\fu\|_{H^\jsmooth(\O)}.
\end{multline}
}

We can bound
\[
I \leq M \| e \|_{\O} \| u - \upi \|_{\O} \leq \frac M 2 \| e \|_{\O}^2 + \frac M 2 \| u - \upi \|_{\O}^2.
\]		
Moreover it holds that
\begin{gather*}
| II | = | (\imu(\fu-\fu^\pi),\fe)_{\Omega} | \leq \frac \alpha 4 \| \fe \|_{\O}^2 + \alpha \| \fu-\fu^\pi \|_\O^2.
\end{gather*}		
To bound $III$, recalling that $\ConstPsi$ is defined as
\[
\ConstPsi=
|\inu\ctsens|\,\|\fp\|_{L^\infty(0,T;L^\infty(\O))},
\]
we have
\begin{equation*}
| III | = | c_{\fp}(u - \uIh,\fe) | \leq \frac {\ConstPsi^2} {\alpha} (2\| u - u^\pi \|_\O^2 + 2\| e \|_\O^2)+ 
\frac{\alpha}4 \| \fe \|_{\O}^2.
\end{equation*}
Combining the bounds for $I$, $II$ and $III$  we obtain
\begin{multline*}
	M \| e \|_{\O}^2 + | E |_{\Au}^2  \leq \left(
\frac	M 2    + \frac{2 \ConstPsi^2}{\alpha} \right) \| e \|_{\O}^2+
\frac{\alpha} 2 \| \fe \|_{\O}^2  \\+
	\frac M 2 \| u - \upi \|_{\O}^2  +
	\alpha \| \fu - \fu^\pi \|_{\O}^2  +
	\frac{2\ConstPsi^2}{\alpha} \| u - u^\pi \|_{\O}^2 
\end{multline*}

	Subtracting
\[
\left(
\frac	M 2 +   \frac{2\ConstPsi^2}{\alpha} \right) \| e \|_{\O}^2+
\frac{\alpha} 2 \| \fe \|_{\O}^2
\]
from both sides then yields
\begin{multline*}
	\left(
	\frac{M}{2}-\frac{2\ConstPsi^2}{\imu}
	\right)\| e \|_{\O}^2
	+\frac{\imu}{2}\|\fe \|_{\O}^2
	+\tu\|e-\he\|_{\Eh}^2
	+\Ku\|\he\|_{\GR}^2
\\	\le
		\left(
	\frac{M}{2} + \frac{2\ConstPsi^2}{\imu}
	\right)\| u - \upi \|_{\O}^2  +
\alpha \| \fu - \fu^\pi \|_{\O}^2.
\end{multline*}
Choosing
\(
M > M_0:=4\ConstPsi^2/\imu
\) implies that \(M/2 - 2\ConstPsi^2/\alpha\) is positive, which, combined with  \eqref{eq:HDGprojection-kk-u}, \eqref{eq:HDGprojection-kk-j} and \eqref{eq:HDGprojection-kk-hu}, by a triangle inequality, yields \eqref{cellproj-u}, \eqref{cellproj-j} and \eqref{cellproj-hu}.	
\end{proof}

\begin{proof}[Proof of \eqref{cellproj-j} and \eqref{cellproj-hu} for $\kj=k-1$]
In this case $\kappa=1$ and $\epsB$ may be positive.  
	Initially we proceed as for the case $k' = k$, using the $L^2$ projection instead of the stabilization-parametric HDG projection.
		We then let $U^\pi = (u^\pi,\hu^\pi,\fu^\pi) = \PiLdue(U) \in \Vh$ denote the $L^2$ orthogonal projection of  $U = (u,\hu,\fu)$.
	We have
	\begin{gather*}
		I = M (u^\pi-u,e)_\Omega = 0.
	\end{gather*}
We then focus on $II$ and $III$.	Thanks to Proposition~\ref{prop:L2} we have, for $\epsilon > 0$ to be chosen later,
	\begin{multline}\label{boundII}
		| II | =	| \Au(U - U^\pi,  \UIh - U^\pi) |
		\\
		\leq	|	(\tu h^{-1}\PiE{k-1}(u-u^\pi),  \PiE{k-1}( e - \he ) )_{\Eh}| + | ((\fu - \fu^\pi)\cdot \n , e - \he  )_{\Eh} |\\
		\le \tu h^{-1} \EHB{\| u  - u^\pi \|_{\Eh}} \| \PiE{k-1}( e - \he) \|_{\Eh} + \EHB{\| (\fu - \fu^\pi) \cdot \n \|_{\Eh}}\|  e - \he \|_{\Eh} \\
		\le \EHB{\tu {h^{-1}}  \| u - u^\pi \|_{\Eh}^2} + \frac{\tu h^{-1}}4 \| \PiE{k-1} (e - \he) \|_{\Eh}^2 + \EHB{\frac 1{4\epsilon}
		h \| (\fu - \fu^\pi) \cdot \n \|^2_{\Eh}} + \epsilon h^{-1} \| e - \he \|_{\Eh}^2.
	\end{multline}

As in \cite[Lemma 3.2]{qiu2016hdg}, since $\he \in \Poly{k-1}(\Eh)$, we  can bound $\| e - \widehat e \|_{\Eh}$ as 
	\begin{equation*}
		\| e - \widehat e \|^2_{\Eh} \leq 	\| \PiE{k-1} (e - \widehat e) \|^2_{\Eh} + C_\text{Poi} h \| \nabla e \|_{\Th}^2.
	\end{equation*}
	To bound $\| \nabla e \|_{\Th}$ we
observe that for $\fw \in (\Poly{k-1}(\Th))^2$, we have that
	\begin{multline*} (\nabla e,\fw)_\Th = - (e, \nabla \cdot \fw)_\Th + (e , \fw\cdot \n)_{\Eh} \\ = c_{\fp}(u^\pi - u,\fw)
	- \imu (\fe,\fw)_{\O} - (\PiE{k-1}(\he - e),\fw\cdot \n)_{\Eh}.
	- c_{\fp}(e,\fw) 
	\end{multline*}
Setting $\fw = \nabla e \in (\Poly{k-1}(\Th))^2$ and applying multiple Cauchy--Schwarz and Young inequalities we obtain
	\begin{multline*}
		\| \nabla e \|_{\Th}^2 \leq | \alpha (\fe ,\nabla e)_{\Th} | + | ( \PiE{k-1} (\he - e),\nabla e \cdot \n)_{\Eh} | + | c_{\fp}(e,\nabla e) | +
		|	c_{\fp}(u^\pi - u,\nabla e)|\\
		\le
		\frac 1 2 \| \nabla e \|_{\Th}^2 + 2 C_\text{tr}^2 h^{-1} \| \PiE{k-1} (\he - e) \|^2_{\Eh}   + 2 \alpha^2 \| \fe \|_{\O} ^2 + 2 \ConstPsi^2 \| e \|_{\O}^2 + 2 \ConstPsi^2 \| u^\pi-u \|_{\O}^2,
	\end{multline*}
	whence, subtracting $\| \nabla e \|^2_{\Th}/2$ from both sides
	\[
	\frac12	\| \nabla e \|_{\O}^2 \leq 2 C_\text{tr}^2 h^{-1} \| \PiE{k-1} (\he - e) \|^2_{\Eh}   + 2 \alpha^2 \| \fe \|_{\O} ^2 + 2 \ConstPsi^2 \| e \|_{\O}^2 + 2 \ConstPsi^2 \| u^\pi-u \|_{\O}^2.
	\]
	Substituting in the bound for $\| e - \he \|_{\Eh}^2$ we obtain
	\begin{multline}\label{bound_e_unprojected}
h^{-1}\| e - \widehat e \|^2_{\Eh}
\leq
		(1 + 4 C_\text{Poi} C_\text{tr}^2) h^{-1}\|  \PiE{k-1} (e - \widehat e) \|^2_{\Eh} \\+
		4 C_\text{Poi} \alpha^2  \| \fe \|_{\O} ^2 + 4 C_\text{Poi}  \ConstPsi^2 \| e \|_{\O}^2 + 4 C_\text{Poi}  \ConstPsi^2 \| u^\pi-u \|_{\O}^2.
	\end{multline}
Then, with $\epsilon = \min\{
\tu/(4(1 + 4 C_\text{Poi}C^2_\text{tr})),1/(16 C_\text{Poi}\imu)
\}$, we can write
\begin{multline*}
|	II |
	\le \EHB{\tu {h^{-1}}  \| u - u^\pi \|_{\Eh}^2} + {\frac{\tu h^{-1}}4  \| \PiE{k-1} (e - \he) \|_{\Eh}^2 }+ \EHB{\frac 1{4\epsilon}
	h \| (	\fu - \fu^\pi) \cdot \n \|^2_{\Eh}} \\
	+ \left(
	{ \frac{\tu h^{-1}} 4 \|  \PiE{k-1} (e - \widehat e) \|^2_{\Eh} }+
	{ \frac{\imu}4 \| \fe \|_{\O} ^2} + { 4 \epsilon C_\text{Poi} \ConstPsi^2 \| e \|_{\O}^2} + 4 \epsilon C_\text{Poi}  \ConstPsi^2 \| u^\pi-u \|_{\O}^2
	\right).
\end{multline*}

	{
		Once again $\ConstPsi=|\inu\ctsens|\,\|\fp\|_{L^\infty(\O)}$.  Then
		\begin{multline*}
			|III|=|\Cpsi(u - \uIh,\fe)|
			\le \ConstPsi	\bigl(\| u - u^\pi \|_{\O} + \|e\|_\O	\bigr)
	\| \fe \|_{\O}\\
			\le 2 \frac{\ConstPsi^2}{\alpha} \| e \|_\O^2 + \frac \alpha 4 \| \fe \|_{\O}^2 + 2 \frac {\ConstPsi^2}{\alpha} \| u - u^\pi \|^2_{\O}.
	\end{multline*}}

	Assembling all the contributions and using the approximation bounds for the $L^2$ projections of Section \ref{sec:L2Proj} we obtain
		\begin{multline}\label{boundeI}
	\left(
	M - \frac{2 \ConstPsi^2}{\alpha} - 4 \epsilon C_\text{Poi} \ConstPsi^2
	\right)	\| e \|_{\O}^2
		+\frac{\imu}2\|\fe \|_{\O}^2
		+\frac{\tu h^{-1}} 2 \|\PiE{k-1} (e-\he)\|_{\Eh}^2
		+\Ku\|\he\|_{\GR}^2\\
		 \le
		  \EHB{\tu {h^{-1}}   \| u - u^\pi \|_{\Eh}^2}+ \left(\frac {2 \ConstPsi^2}{\alpha} + 4 \epsilon C_\text{Poi}  \ConstPsi^2
		 \right)
		   \| u - u^\pi \|_{\O}^2 + \EHB{\frac 1{4\epsilon}
		   	h \| (	\fu - \fu^\pi) \cdot \n \|^2_{\Eh}} \\
		\lesssim
		h^{2\min\{s,k+1\}-2} \| u \|^2_{H^s(\O)} + h^{2\min\{\sigma,k\}} \| \fu \|^2_{H^{\sigma}(\O)},
	\end{multline}
	which, provided $M$ is sufficiently large that the coefficient in front of $\| e \|_{\O}^2$ at the left hand side is positive, after a triangle inequality, gives us \eqref{cellproj-j}, as well as a first sub-optimal bound on $\| u - \uIh \|_{\O}$  ($\lesssim h^{\sbar-1}\|u\|_{H^s}+h^{\sigbar}\|\fu\|_{H^\jsmooth}$). The bound \eqref{cellproj-hu} follows by observing that
	\[
	\| \PiE{k-1} (\uIh - \huIh) \|_{\Eh} \leq 	\| \PiE{k-1} (e - \he) \|_{\Eh} + 	\EHB{\| \PiE{k-1} (u^\pi - u) \|_{\Eh}} + 	\underbrace{\| \PiE{k-1} (\hu^\pi - u) \|_{\Eh} }_{=0},
	\]
	which combined with \EHB{trace and approximation bounds} yields the desired result.
	\end{proof}

\begin{proof}[Proof of \eqref{cellproj-u}  for $k'=k-1$]
We now let  $\Zeta  = (\xi ,\hzeta ,\fzeta ) = (u - \uI, \hu - \huI, \fu - \fuI)$, and we consider the following adjoint problem: 
	find $\ve$ such that
	\begin{gather}\label{adjointStrong1}
		{- \nabla \cdot (\mu \nabla \ve) + M \ve
			+ \inu\chi\,\fp\cdot \nabla \ve = \xi ,} \\
		\label{adjointStrong2}	\mu \nabla \ve\cdot \n = 0, \text{ on $\GN$}, \qquad \mu \nabla \ve\cdot \n + \Ku \ve = 0, \text{ on $\GR$}.
	\end{gather}
For $M$ sufficiently large with respect to $\ConstPsi = | \beta \chi | \| \fp \|_{L^\infty(\O)}$, Problem \eqref{adjointStrong1}--\eqref{adjointStrong2} is well posed, and it is not difficult to see that, letting $\ve \in H^1(\O)$ denote its solution, $\hve = \gamma_{\Eh} \ve$ its single valued trace on $\Eh$ and
\[
\fve = \mu \nabla \ve,
\]
the triplet $\Ve = (\ve,\hve,\fve)$ satisfies,  for all $W = (w,\hw,\fw) \in \HTh{H^1(\Th)}\times L^2(\Eh) \times (\HTh{H^1(\Th)})^2$,
\begin{equation}\label{pb:adjoint}
	M(\ve,w) + \Au^*(\Ve,W) + \Cpsi^*(\fve,w) = (\xi ,w)_\Omega,
\end{equation}
	where
\begin{multline}
	\Au^*(\Ve,W) \\ = 
	(\imu \fve ,\fw)_{\Omega}
	+	  (\ve - \hve,\fw \cdot \n)_{\Eh} - 
	(\nabla \ve,\fw)_{\Th} 	- (
	\nabla \cdot \fve, w)_{\Th}  +  ( \fve\cdot \n,\hw)_{\Eh}\\ + \tu \, h^{-1}(\PiE{k-1}(\ve - \hve), \PiE{k-1}(w - \hw))_{\Eh}  +
	\Ku	(\hve, \hw)_{\GR} 
	= 	\Au(W,\Ve),
\end{multline}
and
\[
\Cpsi^*(\fv,w) = \Cpsi (w,\fv) = \C(w,\fp;\fv)  =
( \inu \ctsens   \fp \cdot \fv, w)_{\Th}.
\]
We observe that, for pure Neumann ($\GN = \partial\O$) or pure Robin ($\GR = \partial\O$) boundary conditions, $\xi  \in L^2(\O)$ implies that $\ve \in H^2(\O)$, with $\| \ve \|_{H^2(\O)} \lesssim \| \xi  \|_{\O}$. For mixed Neumann--Robin boundary conditions, we have instead that $\xi  \in L^2(\O)$ implies   that $\ve \in H^{2-\vareps}(\O)$ for all $\vareps$ with   $0<\varepsilon < 1/2$.  
Indeed,  $\ve$ can be seen as the solution of an elliptic problem with Neumann boundary conditions given by
\[
\nabla \ve\cdot \n = \begin{cases}
	0 & \text{on $\partial\Omega\setminus \GR$,}\\
	 - \frac{\Ku \ve}{\mu} & \text{on $\GR$.}
\end{cases}
\]
As,  for $\vareps$ with   $0<\varepsilon < 1/2$, $\nabla \ve \cdot \n \in H^{1/2-\varepsilon}(\partial\Omega)$, we then obtain (see \cite[Chapters 4-5]{grisvard2011elliptic}) that $\ve \in H^{2-\varepsilon}(\O)$ and
		\begin{equation}\label{smoothness_ve}
	\|\ve\|_{H^{2-\varepsilon}(\O)}\lesssim\|\xi \|_{\O}.
		\end{equation}
	Letting 
	$\Vpi = \PiLdue(\Ve) \in \Vh$, we then have
	\begin{multline*}
		\| \xi  \|_{\O}^2 = 	 M(\ve,\xi ) + \Au^*(\Ve,\Zeta ) + \Cpsi^*(\fve,\xi )  \\ =
		M(\xi ,\ve-\vpi) + \Au(\Zeta ,\Ve-\Vpi) + \Cpsi(\xi ,\fve-\fvpi). 
	\end{multline*}
Focusing, to fix the ideas, on the mixed boundary conditions case (the pure boundary condition case can be retrieved by setting $\vareps=0$), we have
	\[
	M(\xi ,\ve - \vpi) \leq M \| \xi  \|_{\O} \| \ve - \vpi \|_{\O} \lesssim M \| \xi  \|_{\O} h^{2-\varepsilon} \| \ve \|_{H^{2-\varepsilon}(\O)} \leq M h^{2-\vareps} \| \xi  \|_\O^2.
	\]
	Moreover, by \eqref{A.26} we have
	\begin{multline*}
		\Au(\Zeta ,\Ve-\Vpi) = \Au(U - \PiLdue(U),\Ve - \Vpi) + \Au(\PiLdue(U) - \UIh,\Ve - \Vpi) \\
		= 	 \Au(U - \PiLdue(U),\Ve - \Vpi) 
		+ (\hu^\pi - \huIh - u^\pi + \uIh , \tu h^{-1}\PiE{k-1}(\ve- \vpi) - (\fve - \fvpi)\cdot \n )_{\Eh}.
	\end{multline*}
To bound the first term, we split
	\begin{equation*}
	\Au(U - \PiLdue(U),\Ve - \Vpi) = 
\Au(U - \PiLdue(U),\Ve) - \Au(U - \PiLdue(U),\Vpi).
	\end{equation*}
	Using \eqref{smoothness_ve}, as well as the approximation properties of $\PiLdue$, we have
	\begin{multline*}
	\Au(U - \PiLdue(U),\Ve) = (\xi ,u - u^\pi)_{\O} - M(\ve - \vpi,u-u^\pi)_\O - \Cpsi(u-u^\pi,\fve)_\O
	\\ 
	\leq \| \xi  \|_\O \| u - u^\pi \|_\O + M \| \ve - \vpi \|_\O \| u - u^\pi \|_\O + \ConstPsi \| \fve \|_\O \| u - u^\pi \|_\O\\
	 \lesssim \| \xi  \|_\O h^{\sbar} \| u \|_{H^{s}(\O)} + M h^{2-\varepsilon} \| \ve \|_{H^{2-\varepsilon}(\O)} h^\sbar \| u \|_{H^{s}(\O)} +\ConstPsi \| \xi  \|_\O h^\sbar \| u \|_{H^{s}(\O)}.
	\end{multline*}
We also have
\begin{multline*}
	\Au(U - \PiLdue(U),\Vpi) = \tu h^{-1}(\PiE{k-1}(u-u^\pi),\PiE{k-1}(\vpi-\hvpi))_{\Eh} + ((\fu - \fu^\pi)\cdot\n,\vpi-\hvpi)_\Eh
\\
=\tu h^{-1}(\PiE{k-1}(u-u^\pi),\PiE{k-1}(\vpi-\ve))_{\Eh} + ((\fu - \fu^\pi)\cdot\n,\vpi-\ve)_\Eh {+ \underbrace{((\fu - \fu^\pi)\cdot\n,\ve-\hvpi)_\Eh}_{=(\fu\cdot\n-\PiE{k-1} (\fu \cdot \n), \ve-\hvpi)_{\partial\O}}}
\\
\lesssim  h^{-1} \EHB{\| \PiE{k-1}(u-u^\pi) \|_{\Eh} \| \PiE{k-1}(\vpi-\ve) \|_{\Eh}} + \EHB{\| (\fu - \fu^\pi)\cdot\n \|_{\Eh} \| \vpi-\ve \|_{\Eh}}\\ + {\| \fu\cdot\n-\PiE{k-1} (\fu \cdot \n)\|_{\partial\O} \| \ve-\hvpi \|_{\partial\O}}  \\
\lesssim h^{-1} h^{\min \{s,k+1\}-1/2} \| u \|_{H^s(\O)} h^{2-\varepsilon-1/2} \| \ve \|_{H^{2-\varepsilon}(\O)} +  h^{\min\{\sigma,k\}-1/2} \| \fu \|_{H^\sigma(\O)} h^{2-\varepsilon-1/2} \| \ve \|_{H^{2-\varepsilon}(\O)}
\\ + {h ^{\min\{\sigma-1/2,k\}} \| \fu \|_{H^{\sigma-1/2}(\partial\O)} h^{\min\{3/2-\varepsilon,k\}} \| \ve \|_{H^{3/2-\varepsilon}(\partial\O)}}
 \\
\lesssim \left(
h^{\min\{s,k+1\}-\varepsilon} \| u \|_{H^s(\O)} + h^{\min\{\sigma,k\} + 1 - \varepsilon} \| \fu \|_{H^\sigma(\O)}
\right) \| \xi  \|_{\O},
\end{multline*}
{where the last term of the second line reduces to a boundary term because $\ve - \hvpi$ is single valued on the mesh skeleton and orthogonal to $\Poly{k-1}(\Eh)$, $\fu^\pi\cdot\n\in\Poly{k-1}(e)$ on each edge $e$, and the normal component of $\fu\in H^\jsmooth(\O)^2$ is continuous across the interior edges (the analogous term in the first scalar product vanishes, since $\PiE{k-1}(\ve-\hvpi)=0$); on $\partial\O$ we have used $\fu\cdot\n\in H^{\jsmooth-1/2}(\partial\O)$ and $\ve\in H^{3/2-\varepsilon}(\partial\O)$. } For the case $k=1$ and $\kj=0$, the assumption $\jsmooth \ge 3/2-\varepsilon$ ensures that the bound remains valid.

The second term can be bounded as follows, where, in the last step, we use \eqref{bound_e_unprojected}, combined with \eqref{boundeI} and \eqref{standardL2}, which gives $h^{-1/2}\| e - \he \|_{\Eh} \lesssim h^{\sbar-1}\|u\|_{H^s(\O)} + h^{\sigbar}\|\fu\|_{H^\jsmooth(\O)}$:
\begin{multline}
	 | (\hu^\pi - \huIh - u^\pi + \uIh , \tu h^{-1}\PiE{k-1}(\ve- \vpi) - (\fve - \fvpi)\cdot \n )_{\Eh} | \\
	 \leq  h^{-1/2} \| e - \he \|_{\Eh} 
	 \left(\tu
h^{-1/2} \| \PiE{k-1} (\ve - \vpi) \|_{\Eh} + h^{1/2} \| \fve - \fvpi \|_{\Eh} 
	 \right) \\
	 \lesssim
 h^{-1/2} \| e - \he \|_{\Eh} \left(\tu
h^{-1/2} h^{2-\varepsilon-1/2}\| \ve \|_{H^{2-\varepsilon}(\O)} + h^{1/2} h^{1-\varepsilon-1/2} \| \fve \|_{H^{1-\varepsilon}(\O)}  
\right) \\
\lesssim h^{1-\varepsilon}\left(
h^{\min\{s,k+1\} - 1} \| u \|_{H^s(\O)} + h^{\min\{\sigma,k\}} \| \fu \|_{H^\sigma(\O)}
\right) \| \xi  \|_{\O}.
\end{multline}

Moreover, it holds that
	\begin{multline*} | \Cpsi(\xi ,\fve-\fvpi) | = (\beta \ctsens  \fp \cdot(\fve - \fvpi),\xi )_{\O} \leq \| \beta \ctsens  \fp \|_{L^\infty(\O)} \| \fve - \fvpi\|_{\O} \| \xi  \|_{\O} \\
		\lesssim  h^{1-\vareps}\, \| \fve \|_{H^{1-\vareps}(\O)} \| \xi  \|_{\O} \lesssim  h^{1-\vareps} \| \xi  \|_\O^2,
	\end{multline*}
	and, assembling all the contributions, we finally get
	\begin{equation*}
		\| \xi  \|_{\O}^2 \leq C_0 h^{2-\vareps} \| \xi  \|^2_{\O}  +
		C_1  \| \xi  \|_\O 
		(h^{\sbar - \vareps} \| u \|_{H^{s}(\O)} + h^{\sigbar +1-\varepsilon} \| \fu \|_{H^\jsmooth (\O)} )
		+ C_2  h^{1-\vareps}
		\| \xi  \|^2_{\O} 
	\end{equation*}
where $C_0$ depends on $M$,	whence, dividing by $\| \xi  \|_{\O}$,
	\[
	(1-C_0 h^{2-\vareps}- C_2  h^{1-\vareps}) \| \xi  \|_{\O} \lesssim h^{\sbar - \vareps} \| u \|_{H^{s}(\O)} + h^{\sigbar +1-\varepsilon} \| \fu \|_{H^\jsmooth (\O)} .
	\]
	Provided $h$ is small enough that $1-C_0 h^{2-\vareps}- C_2  h^{1-\vareps} \geq 1/2$, we get the desired bound.
\end{proof}

\newcommand{\fqh}{\boldsymbol{q}_h}

	\begin{proof}[Proof of \eqref{cellproj-dt} for both $\kj = k$ and $\kj = k-1$]
		Introduce the shifted bilinear form
		\[
		\mathcal B_{\fp}(X,W)
		:=M(x,w)_\O+\Au(X,W)+\C(x,\fp;\fw),
		\qquad X=(x,\widehat x,\boldsymbol x).
		\]
		For almost every $t\in(0,T)$, let $\widetilde U_h^I$ be the elliptic
		projection of $\dt U$ with the value $\fp(t)$ fixed:
		\[
		\mathcal B_{\fp}(\widetilde U_h^I,W)
		=\mathcal B_{\fp}(\dt U,W)
		\qquad\forall W\in\Vh.
		\]
		Differentiating \eqref{pb:elliptic} in time gives
		\[
		\mathcal B_{\fp}(\dt\UIh,W)
		+\C(\uIh,\dt\fp;\fw)
		=\mathcal B_{\fp}(\dt U,W)
		+\C(u,\dt\fp;\fw).
		\]
		Consequently, for
		\[
		Q_h = (q_h,\widehat q_h,\fqh ):=\dt\UIh-\widetilde U_h^I,
		\]
		we have the exact identity
		\begin{equation}\label{eq:time-correction-elliptic-km1k}
			\mathcal B_{\fp}(Q_h,W)
			=\C(u-\uIh,\dt\fp;\fw)
			\qquad\forall W = (w,\hw,\fw)\in\Vh.
		\end{equation}
		Testing with $W=Q_h$, using the coercivity of the shifted form and Young's 
		inequality, eventually yields
		\[
		\|q_h\|^2_\O
		+\|\fqh\|^2_\O
		+h^{-\kappa}\| \PiE{\kj}(q_h -\widehat q_h)\|^2_{\Eh}
		\leq C
		\|\dt\fp\|^2_{L^\infty(0,T;L^\infty(\O))}\|u-\uIh\|^2_\O + \frac12 \|  \fqh\|_{\O}^2,
		\]
		which, by subtracting $ \|  \fqh\|_{\O}^2/2$ from both sides results in
		\[
			\|q_h\|^2_\O
		+ \frac12 \|\fqh\|^2_\O
		+h^{-\kappa}\| \PiE{\kj}(q_h -\widehat q_h)\|^2_{\Eh}
		\lesssim
		\|\dt\fp\|^2_{L^\infty(0,T;L^\infty(\O))}\|u-\uIh\|^2_\O.
		\]				
		On the other hand, under Assumption \ref{ass:smoothness}, which provides the necessary regularity for the time derivative, ensuring in particular that $\dt\fu\cdot\n$ is continuous across interfaces, we can apply \eqref{cellproj-u} to $\dt U$, which gives the
		corresponding bound for $\dt u-\widetilde u_h^I$.  The identity
		\[
		\dt U-\dt\UIh=(\dt U-\widetilde U_h^I)-Q_h
		\]
		and the pointwise estimate \eqref{cellproj-u} for $u-\uIh$ therefore give
		\eqref{cellproj-dt}.
	\end{proof}

\smallskip


\section{Numerical tests}\label{sec:numerical}

In this section, we present numerical experiments to test the performance of the proposed HDG methods.
We consider two test cases: 
the first is a problem with known exact solution, used to verify the convergence rates of the method,  while the second is a more challenging scenario exhibiting blow-up behavior.

We test the formulations introduced in Section~\ref{sec:HDGU-scheme} for
polynomial degrees $k=1,2$. 
The notation HDG$(k',k)$ indicates that the cellular flux $\fu$
and the trace $\widehat{u}_h$ are approximated by polynomials of degree $k'$,
while the cellular density $u$, the chemoattractant $\p$, its trace
$\widehat{\varphi}_h$, and the chemical flux $\fp$ are
approximated by polynomials of degree $k$.
Accordingly, the configurations considered are
\begin{center}
	\HDGAzu ,\,
	\HDGAud,\,
	\HDGku,\,
	\HDGkd .
\end{center}
For simplicity, time derivatives are discretized by the implicit Euler method.
Unless otherwise stated, in all simulations the nonlinear problem is solved by
Picard iteration with tolerance $10^{-12}$, and the stabilization parameters
are set to $\gamma_u=\gamma_\varphi=1$.
All numerical experiments were performed in \textsc{Matlab} R2025a
on an iMac equipped with a 3.6 GHz 10-core Intel Core i9 processor and
128 GB of RAM.

\begin{remark}
	We briefly comment on a few issues related to the implementation of the hybridized discontinuous Galerkin method used in the experiments. 
	The efficient implementation of the method relies on the static condensation of the element-level variables: solving, element by element, a local problem reduces the global discrete problem to a system involving only the trace variables $\hu_h$ and $\hp_h$. As a result, only the trace unknowns enter the global linear system 	that can be assembled with a storage cost comparable to that of conforming methods of the same order, while retaining the advantages of discontinuous Galerkin methods. 

	In the case of the Keller--Segel system the local problems are nonlinear,
	because of the chemotactic term $\chi\,u\,\nabla\varphi$. A Newton method
	would require the derivation and implementation of the Jacobian of the
	discrete nonlinear operator, including its contribution to the statically
	condensed system; moreover, any change in the model (additional equations,
	different chemotactic sensitivities or reaction terms) would require the
	Jacobian to be derived and implemented anew. We therefore resort to a
	fixed-point (Picard) iteration: at each iteration the nonlinear terms are
	linearized by freezing the chemical flux $\fp_h$ at the previous
	iterate, so that each step only requires the solution of a linear HDG problem
	of the same structure as in the linear case, with the same local solvers and
	the same static condensation. 
    In our experiments, 
    the iteration
	converges within a few iterations per time step
	(3 in Example~1 and between 5 and 10 in Example~2).

	This choice is also motivated by the  prospect 
    of extending the method to
	cancer-on-chip models, where additional equations and different
	nonlinearities must be taken into account; in this setting, the Picard
	approach provides a flexible strategy, since new nonlinear terms can be
	handled by the same freezing procedure without requiring the derivation of new Jacobians.
\end{remark}

\subsection{Example 1}
\label{subsec:manufactured-solution}

We begin with a test case designed to verify the convergence properties. %

The computational domain is the unit square $\Omega=[0,1]^2$, with
non-homogeneous Neumann boundary conditions imposed on the entire boundary
$\partial\Omega$.  The exact solutions are
\[
u(x,y,t) = \cos(t)\sin(\pi x)\cos(\pi y), \qquad
\varphi(x,y,t) = \sin(t)\cos(\pi x)\sin(\pi y),
\]
the source terms $f_u$, $f_\varphi$ in equations~\eqref{KS1} and~\eqref{KS2} and
the Neumann data on $\partial\Omega$ are computed accordingly.

We fix the final time $T=1$ and the physical parameters as $\mu = \nu = a = b = 1$ and $\chi = 1$.
We investigate the spatial convergence on a sequence of unstructured
triangular meshes generated with the MATLAB mesh generator \texttt{initmesh}
(PDE Toolbox), with maximum edge length
$h \in \{2^{-1}, 2^{-2}, 2^{-3}, 2^{-4}\}$.  
The time step is kept fixed to $dt = 2\times 10^{-4}$; halving it leaves
the errors on the finest mesh essentially unchanged, confirming that the
temporal error is negligible with respect to the spatial one.

%

Figures~\ref{fig:hdg-h-history} and~\ref{fig:hdg-h-PkPk-history} present the
$L^2(\Omega)$-errors at the final time $T=1$ for HDG$(k-1,k)$ and HDG$(k,k)$,
$k=1,2$, respectively, together with the theoretical convergence rates,
represented by dashed and dotted lines.
\begin{figure}[htbp]
	\centering
		 \includegraphics[width=0.45\textwidth]{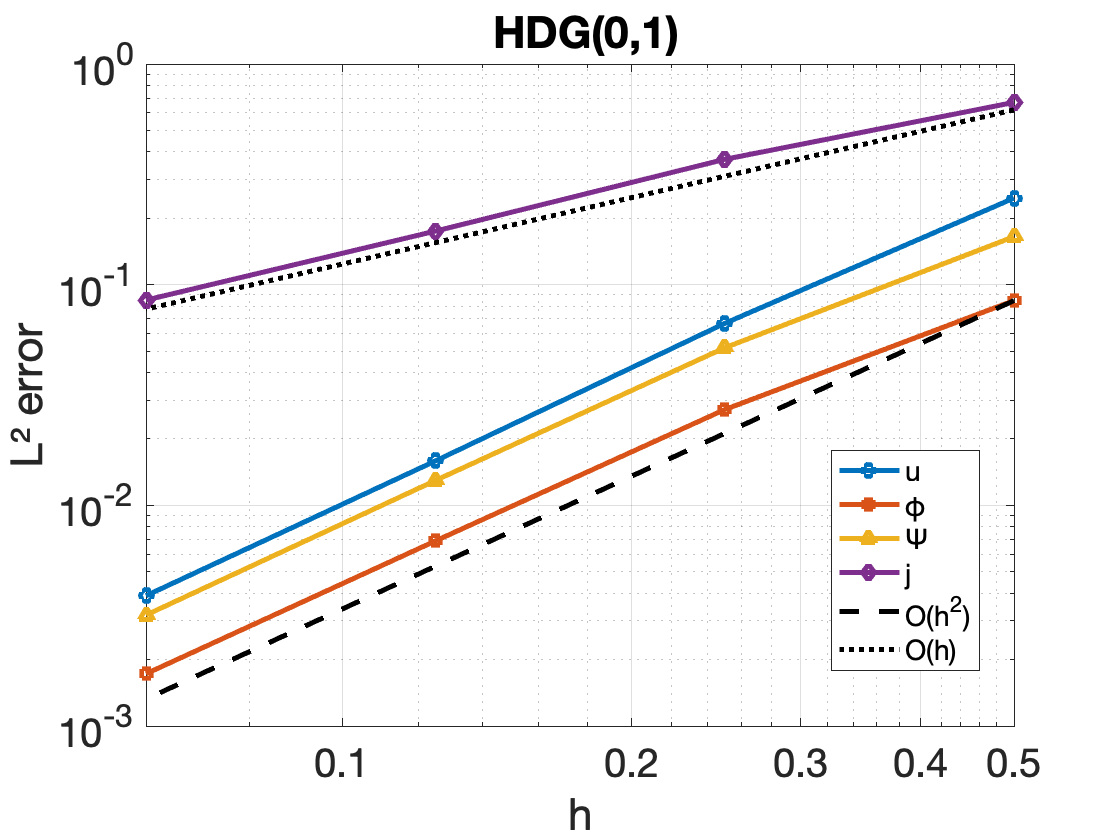}  
	 \includegraphics[width=0.45\textwidth]{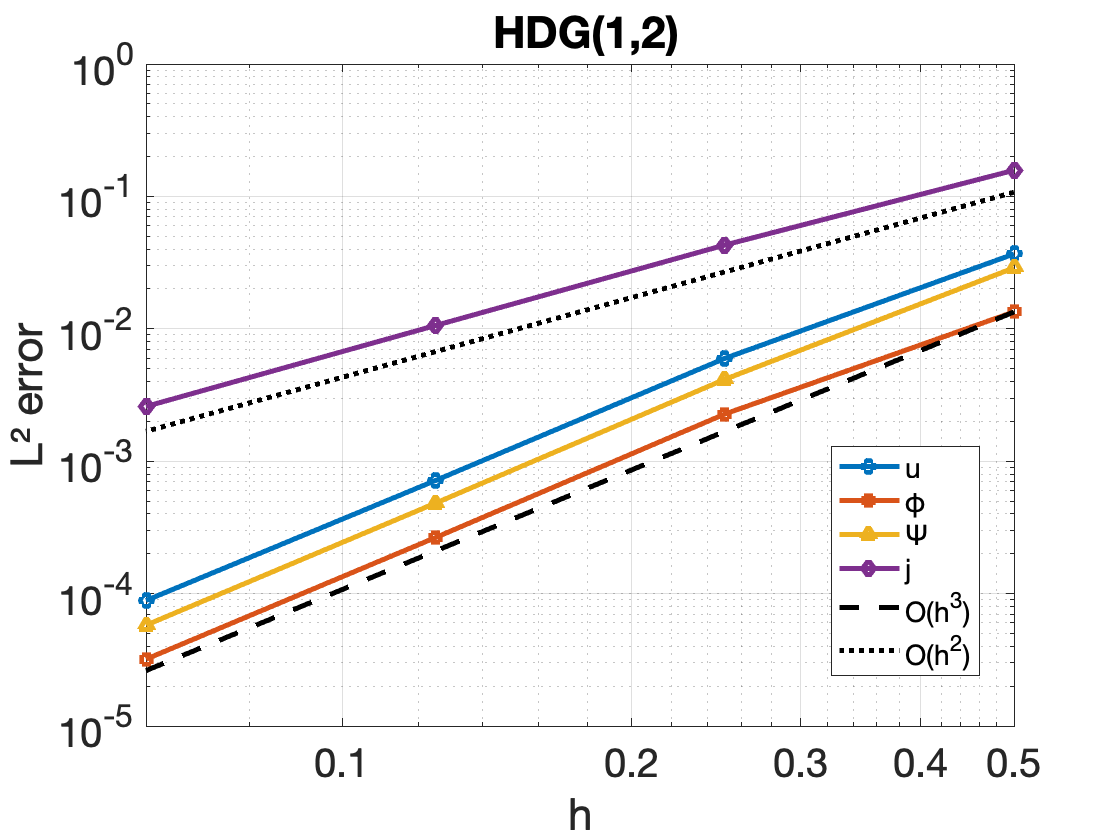}  
	\caption{Example 1: $L^2(\Omega)$-errors of $u_h$, $\varphi_h$, $\fu_h$
		and $\fp_h$ at the final time $T=1$, as functions of the
		characteristic mesh size $h$, for HDG$(0,1)$ (left) and HDG$(1,2)$ (right).
		Dashed and dotted lines indicate the reference slopes $h^{k+1}$ and $h^{k}$,
		respectively.}
	\label{fig:hdg-h-history}
\end{figure}

\begin{figure}[htbp]
	\centering
	\includegraphics[width=0.45\textwidth]{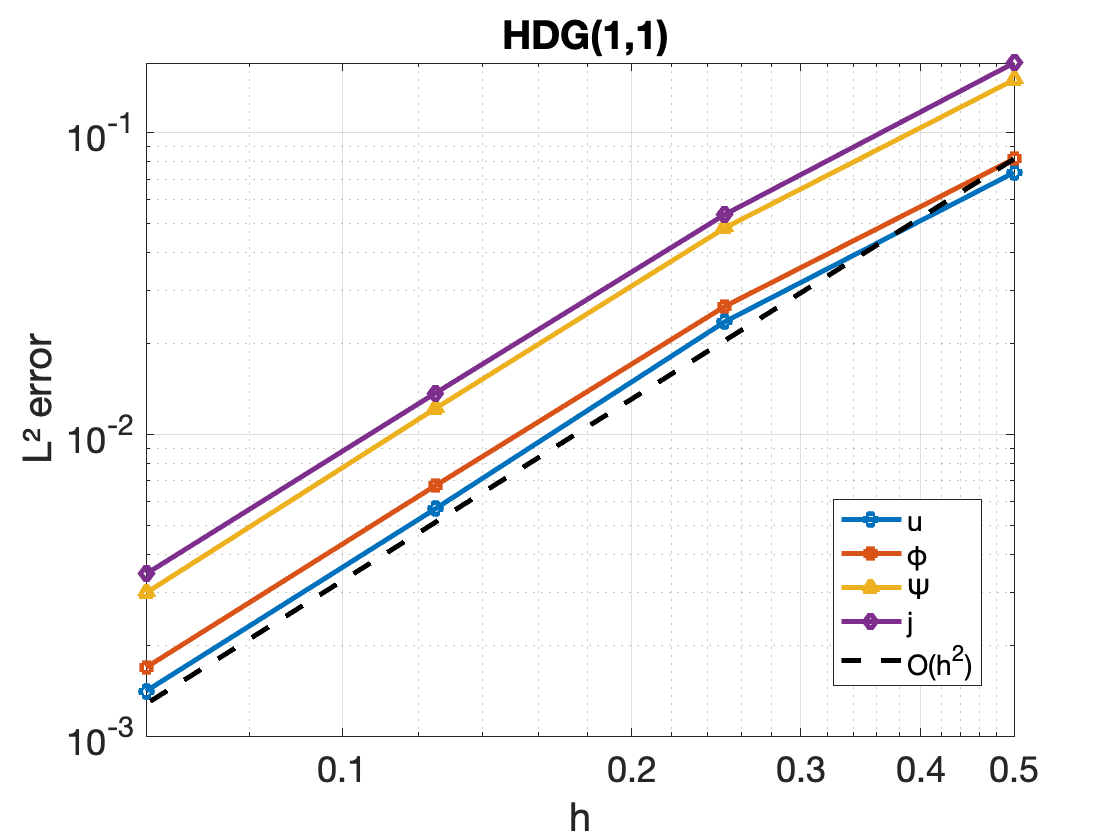}  
	\includegraphics[width=0.45\textwidth]{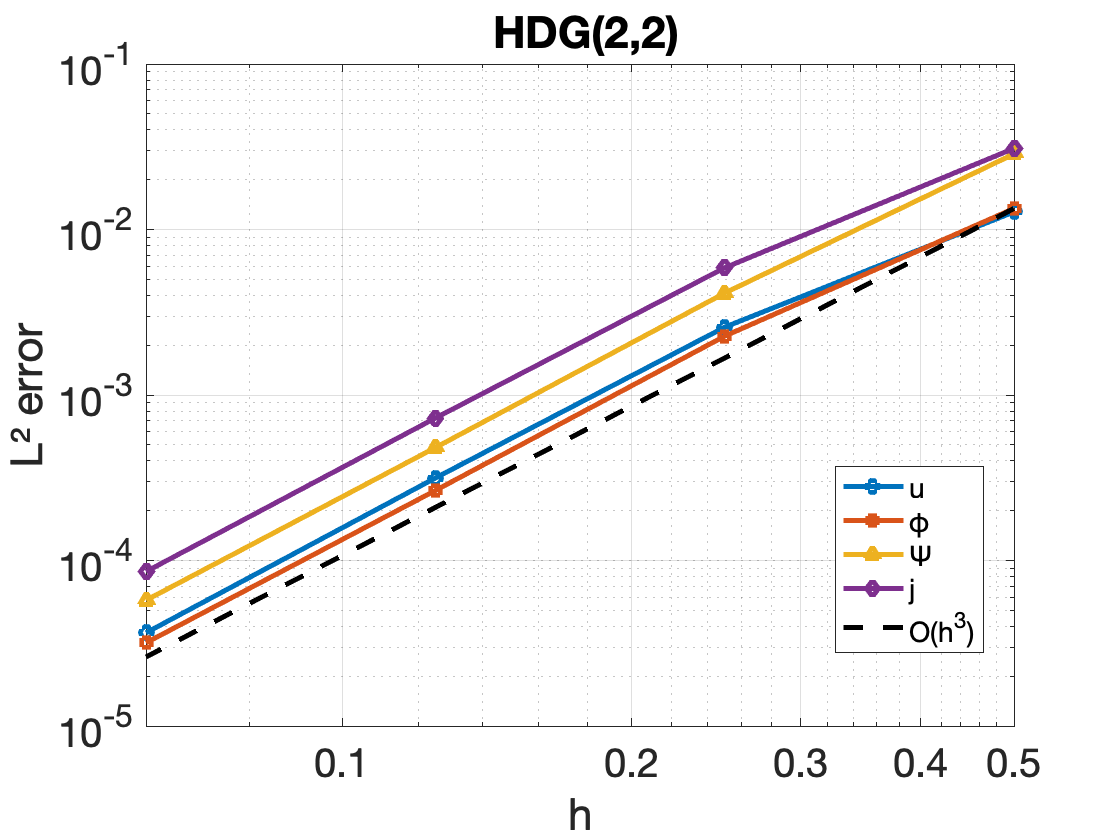}  
\caption{Example 1: $L^2(\Omega)$-errors of $u_h$, $\varphi_h$, $\fu_h$
	and $\fp_h$ at the final time $T=1$, as functions of the
	characteristic mesh size $h$, for HDG$(1,1)$ (left) and HDG$(2,2)$ (right).
	The dashed line indicates the reference slope $h^{k+1}$.}
\label{fig:hdg-h-PkPk-history}
\end{figure}

The results agree with Theorem 3.5 (see Table 1): 
the scalar variables $u_h$ and $\varphi_h$, as well as the flux $\fp_h$, converge with the optimal order $\mathcal{O}(h^{k+1})$, while the flux $\boldsymbol{j}_h$ converges with order $\mathcal{O}(h^{k})$ in the case \HDGA,  
for $k=1,2$.
For the case \HDGk, 
with $k=1,2$, optimal convergence of order $\mathcal{O}(h^{k+1})$ is observed for all variables, as expected.

\subsection{Example 2: Blow-up dynamics with chemotaxis}
\label{subsec:blowup-linear}

After validating the theoretical convergence rates in smooth regimes, we
investigate the ability of the proposed HDG scheme to reproduce the strong
cell aggregation associated with finite-time blow-up in the Keller--Segel
model. This experiment is intended to assess the qualitative robustness of
the method in a strongly aggregation-dominated regime and does not constitute
a numerical analysis of blow-up formation.

The computational domain is the square
\[
\Omega=\left[-\tfrac{1}{2},\tfrac{1}{2}\right]^2\subset\mathbb{R}^2,
\]
and homogeneous Neumann boundary conditions are imposed on
$\partial\Omega$. The initial conditions are highly concentrated Gaussian
profiles centered at the origin:
\begin{equation}
\label{eq:initial-conditions-ex2}
u_0(x,y)=840\exp\bigl(-84(x^2+y^2)\bigr),
\qquad
\varphi_0(x,y)=420\exp\bigl(-42(x^2+y^2)\bigr).
\end{equation}
The physical parameters are the same as in Example~1.

We consider the time interval $[0,T]$, with $T=1.2\times10^{-4}$.
The initial cell density has total mass
$m(0)=\int_\Omega u_0\,dx\approx 10\pi$, which exceeds the critical value $8\pi$ for the present choice of
parameters \cite{horstmann_1970_2003}, so that the cell density is expected
to concentrate at the origin and to blow up in finite time; see, e.g.,
\cite{epshteyn_discontinuous_2009}. 
This regime is particularly challenging from the numerical point of view,
since resolving an increasingly localized and steep density profile requires
very fine spatial and temporal discretizations. Moreover, the dominant
chemotactic transport may generate spurious oscillations, including
nonphysical negative concentrations.

Figure~\ref{fig:unstabilized} 
shows the numerical solution obtained with the HDG(2,2) scheme on a
mesh with maximum edge length $h = 1/70$, $\Delta t = 10^{-5}$ and
$\gamma_u=\gamma_\varphi=1$. 
The method fails to reproduce the expected
qualitative behavior: spurious oscillations develop 
and negative values of the cell density appear at $t = 6\times 10^{-5}$,
where the simulation is stopped.

\begin{figure}[htbp]
\centering
\includegraphics[width=.45\textwidth]
{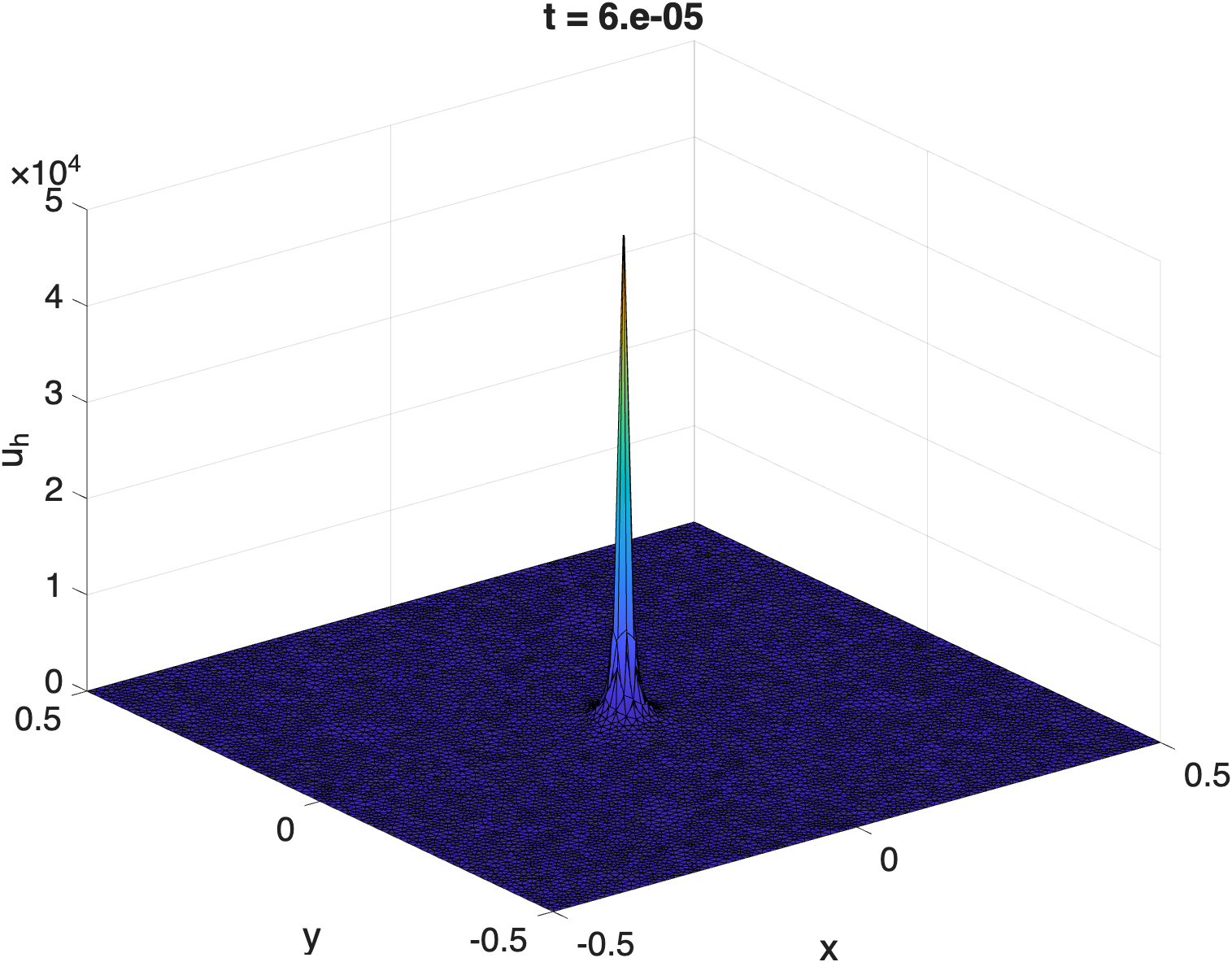}
\includegraphics[width=.45\textwidth]
{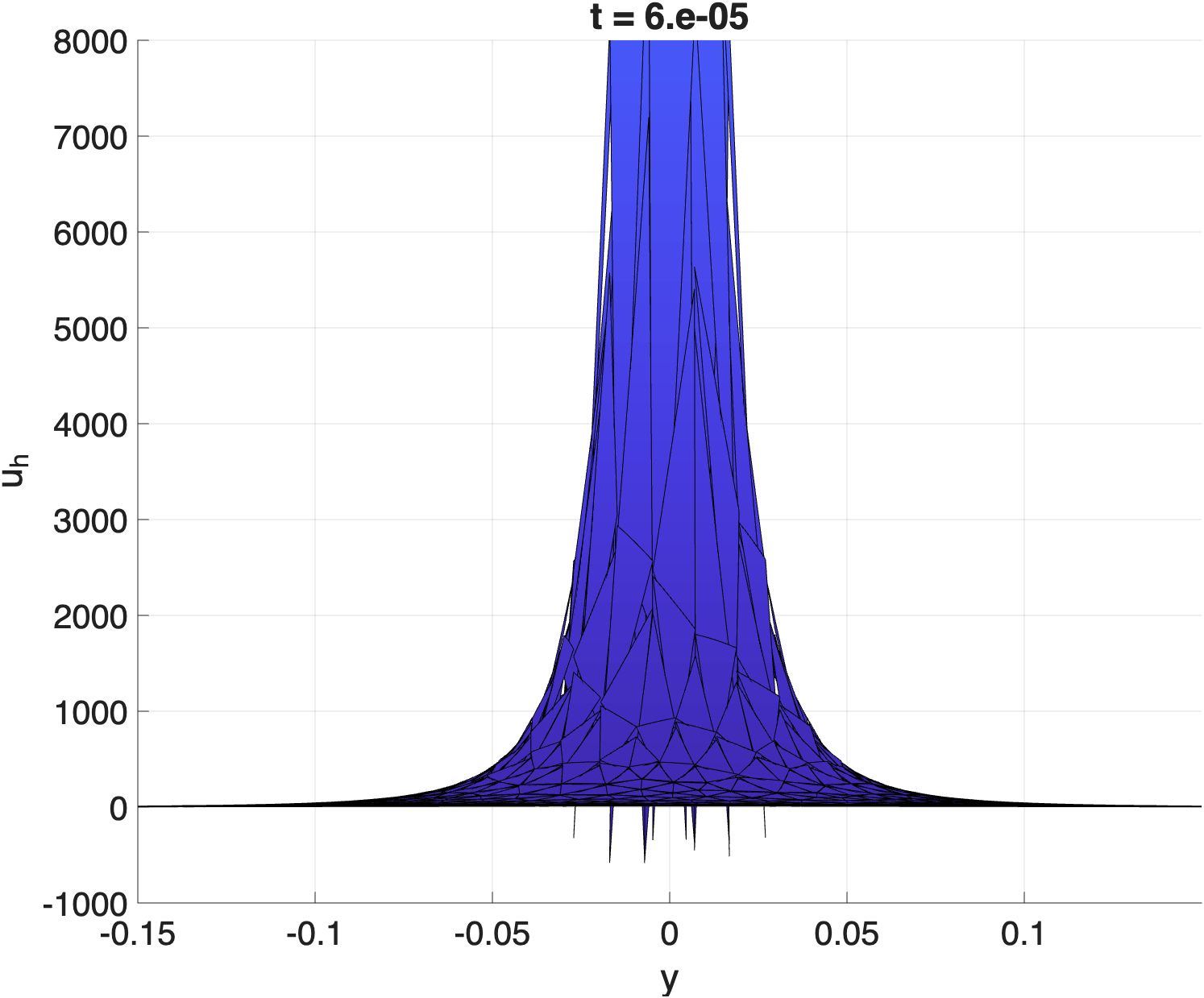}
\caption{Example 2: HDG$(2,2)$ without upwind stabilization, $h=1/70$,
$dt=10^{-5}$, at $t=6\times10^{-5}$, when negative values of $u_h$
appear and the simulation is stopped. Left: 3D view; right: zoomed side
view in the $(y,u_h)$-plane.}
\label{fig:unstabilized}
\end{figure}

To control the observed oscillations and 
the loss of positivity,
we modify the HDG scheme by introducing an upwind stabilization in
the numerical normal flux on $\partial K$:
\begin{equation}
\label{eq:numerical_flux}
\widehat{\boldsymbol j}_h\cdot\boldsymbol n_K
=
\boldsymbol j_h\cdot\boldsymbol n_K
+\gamma_u\bigl(u_h-\widehat u_h\bigr),
\end{equation}
where $\boldsymbol n_K$ denotes the outward unit normal to $\partial K$.
Following the approach of \cite{EggerHDG1010,HDG_conv_diff}, the stabilization
parameter is decomposed as
\[
\gamma_u=\gamma_D+\gamma_C,
\qquad \gamma_D>0,
\]
where $\gamma_D$ is the diffusive contribution and $\gamma_C$ is the upwind
contribution associated with chemotactic transport.

From (\ref{fluxp})-(\ref{fluxu}) since $\fp=-\nu\nabla\varphi$, the chemotactic velocity is
\[
\boldsymbol b_{\rm chem}
=\chi\nabla\p
=-{\beta\mu \widetilde\chi}\boldsymbol\fp \qquad \text{ with } \beta = 1/\nu, \,  \widetilde\chi =\chi/\mu. 
\]
Consequently, the inflow condition
$\boldsymbol b_{\rm chem}\cdot\boldsymbol n_K<0$ is equivalent to
$\fp\cdot\boldsymbol n_K>0$. Accordingly, the convective
stabilization parameter is evaluated edgewise as

\begin{equation}
\label{gamma_upwind}\gamma_C|_e = \max\Big(\beta\mu \widetilde\chi\,\frac{1}{|e|}\int_e \fp_h\cdot\boldsymbol{n}_K,\,0\Big),
\qquad e\subset\partial K .
\end{equation}

We consider the upwind-stabilized HDG(2,2) scheme on the same mesh and with the same time step, and set $\gamma_D = 1$.
In  Figure~\ref{fig:P2P2_h70_u_phi} we report snapshots
of the computed solution $u_h$ at selected time instants
$t=0$, $t=6\times10^{-5}$, and $t=1.2\times10^{-4}$. 
The bottom-right panel displays a side view of $u_h$ in the $(y, u_h)$-plane at
$t=1.2\times10^{-4}$.

\begin{figure}[htbp]
\centering%
\includegraphics[width=0.45\textwidth]{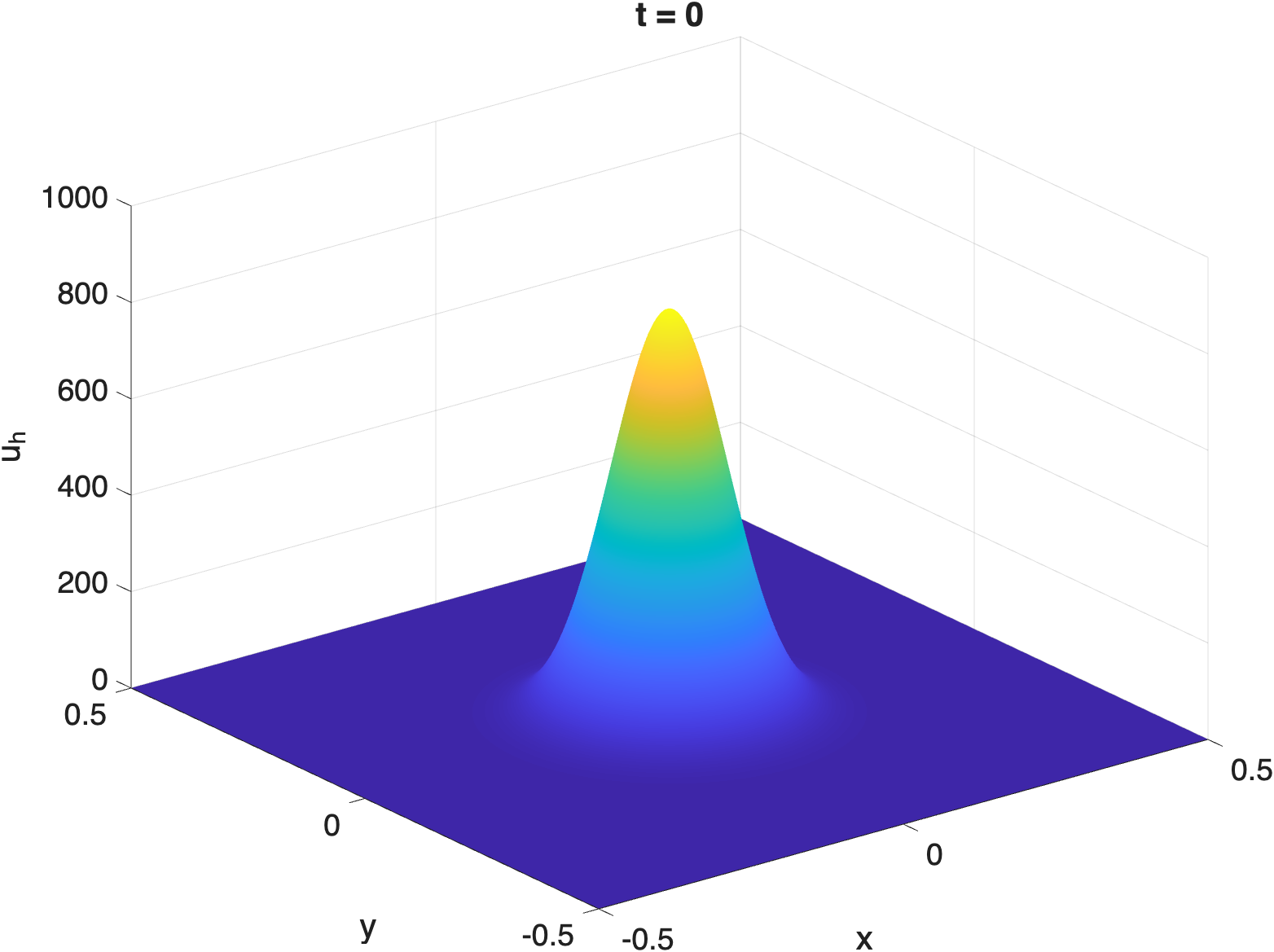}%
\includegraphics[width=0.45\textwidth]{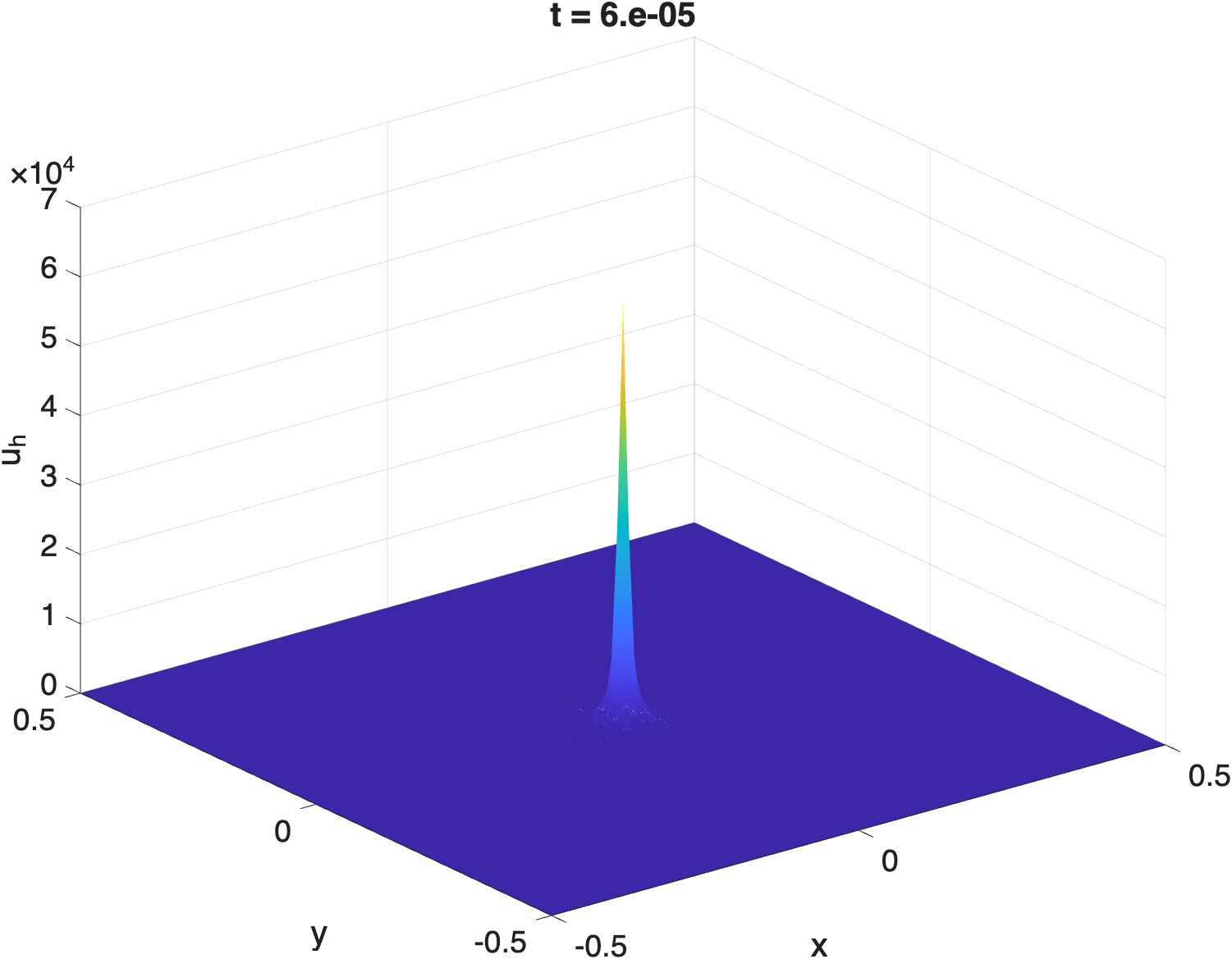}  
\includegraphics[width=0.45\textwidth]{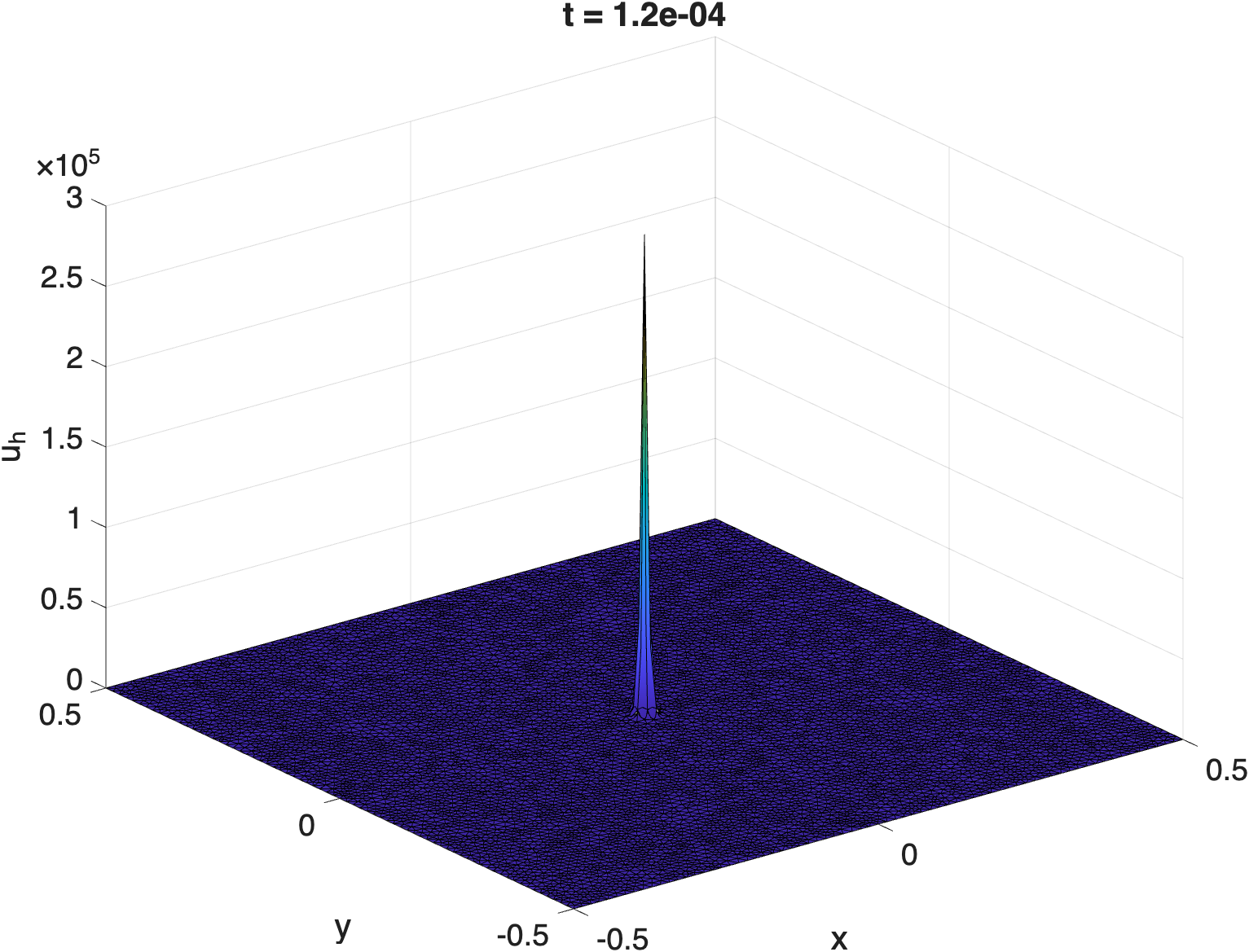} 
\includegraphics[width=0.4\textwidth]{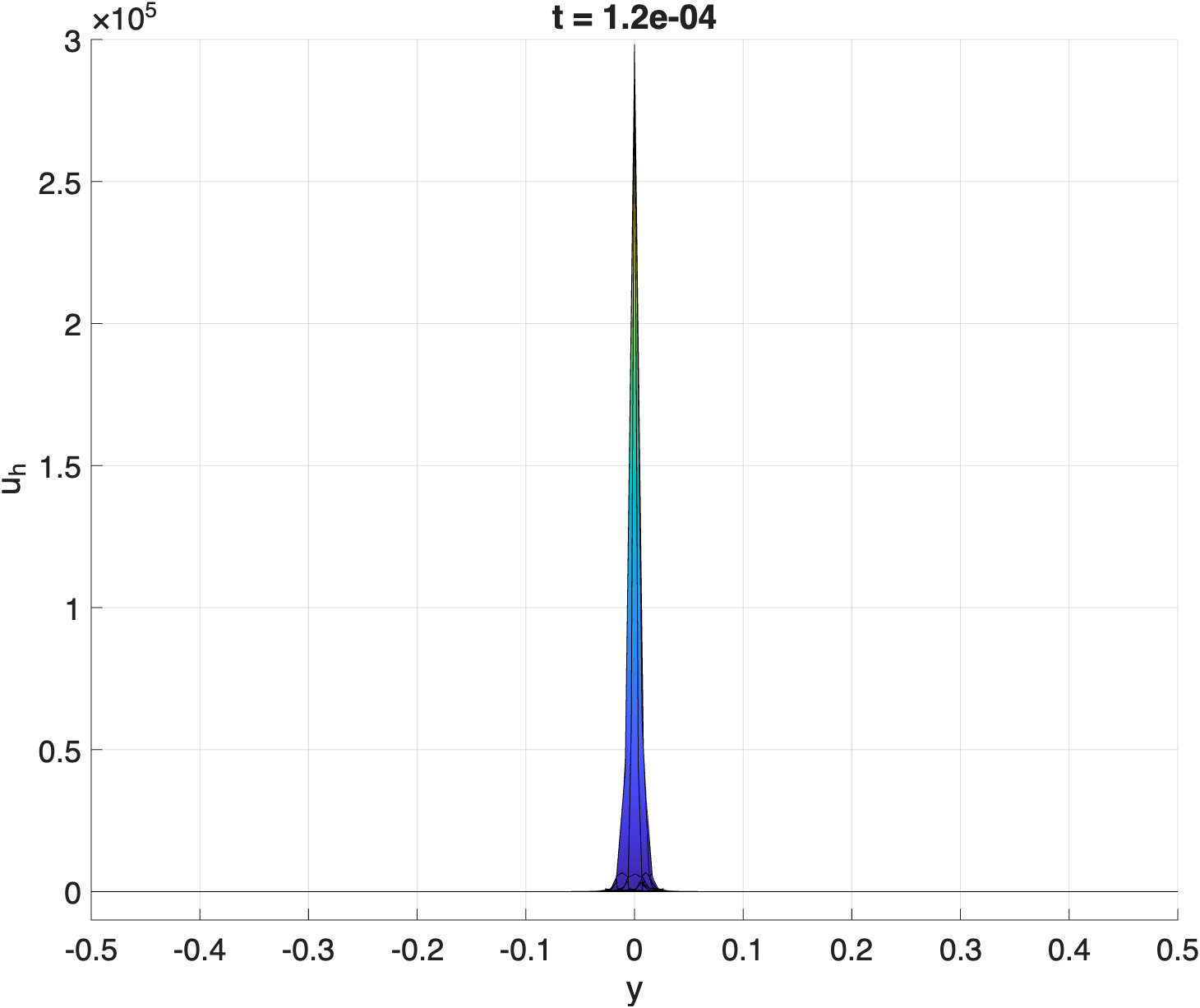} 
\caption{Example 2: cell density $u_h$ computed with the upwind-stabilized
	HDG$(2,2)$ scheme, $h=1/70$ and $dt=10^{-5}$, at $t=0$ and
	$t=6\times10^{-5}$ (top), and at $t=1.2\times10^{-4}$ (bottom: three-dimensional
	view, left; side view in the $(y,u_h)$-plane, right).}
\label{fig:P2P2_h70_u_phi}
\end{figure}

We observe that the cell density rapidly increases over time, with a
progressive concentration of mass toward the origin, as expected.
The total mass $m(t)=\int_\Omega u_h(t)\,dx$ is preserved up to round-off:
$\max_n |m(t_n)-m(0)|/m(0) \approx 6\times 10^{-16}$, and the values of
$u_h$ at the Lagrange nodes remain positive at all time steps.
The same holds when the simulation is repeated at fixed $h = 1/70$ with
$dt/2$ and $ dt/4$.

These preliminary results indicate that the proposed HDG method can
reproduce the expected qualitative behavior in this challenging blow-up
regime, while preserving the total mass and the nodal positivity of $u_h$.
A more detailed investigation of tailored upwind
stabilization strategies and of improved time discretization schemes is
left for future work.

%

\appendix

\section{Properties of the projections}\label{section:prop_projections}
We recall here some properties of the projections introduced in Section~\ref{sec:proof}. 

\subsection{Properties of the HDG projection}\label{sec:approximationHDG}

\begin{proof}[Proof of Proposition \ref{prop:HDGproj}] 
	Letting $\widehat \phi^\pi = \PiE{k}\phi$, we can write 
	\begin{multline*}
		\Ap(\Phi - \PiHDG^{\tp}(\Phi),\Theta)  = 
		\underbrace{((\fp - \PiHv{\tp}(\p,\fp))  \cdot\n, \t)_{\Eh}  
			+ \tp(\p - \PiHs{\tp}(\p,\fp),\t)_{\Eh} }_{=0}
		- \underbrace{\tp(\hp - \hp^\pi,\t)_{\Eh} }_{=0}\\ -
		\underbrace{	(  \fp - \PiHv{\tp}(\p,\fp),\nabla\t)_{\Th}}_{=0}  + (a (\p-\PiHs{\tp}(\p,\fp)),\t)_\Omega +  (\inu (\fp-\PiHv{\tp}(\p,\fp)),\ft)_{\Omega}	\\ - \underbrace{(\p-\PiHs{\tp}(\p,\fp) ,
			\nabla \cdot \ft)_{\Th}}_{=0}
		+ \underbrace{(\hp -\hp^\pi , \ft\cdot \n)_{\Eh} }_{=0}\\
		\underbrace{-  ( (\fp - \PiHv{\tp}(\p,\fp)) \cdot \n ,\ct)_{\Eh} - \tp(\p-\PiHs{\tp}(\p,\fp),\ct)_{\Eh}}_{=0} + \underbrace{\tp(\hp - \hp^\pi,\ct)_{\Eh}}_{=0} +  \underbrace{\Kp (\hp - \hp^\pi, \ct  )_{\GR}}_{=0}\\
		=  (a (\p-\PiHs{\tp}(\p,\fp)),\t)_\Omega +  (\inu (\fp-\PiHv{\tp}(\p,\fp)),\ft)_{\Omega},
	\end{multline*}
	which  concludes the proof.
\end{proof}

\begin{proof}[Proof of Proposition \ref{prop:L2}]
	Letting $W^\pi = (w^\pi,\hw^\pi,\fw^\pi) = \PiLdue(W) $,	and using the definition \eqref{numflux} of the numerical flux and the definition of the projector $\PiLdue$, we have
	\begin{multline*}
		\Au(U,W - W^\pi) = 	\\
		\underbrace{(\imu \fu ,\fw - \fw^\pi)_{\Omega}}_{=0} \underbrace{- 
			(\fu  ,\nabla (w - w^\pi))_{\Th}
			+(\fu \cdot \n, w - w^\pi)_{\Eh}}_{=(\nabla\cdot\fu,w - w^\pi)_{\Th} = 0} + \tu h^{-1} (\PiE{k-1}(u - \hu), w - w^\pi)_{\Eh}  \\ 	\underbrace{- (u ,
			\nabla \cdot (\fw - \fw^\pi))_{\Th}\pm  (u , (\fw - \fw^\pi)\cdot \n)_{\Eh}}_{= (\nabla u , \fw - \fw^\pi) - (u,(\fw - \fw^\pi)\cdot \n)_{\Eh} = - (u,(\fw - \fw^\pi)\cdot \n)_{\Eh}} +  {(\hu , (\fw - \fw^\pi)\cdot \n)_{\Eh}} \\ 
		-\underbrace{( \fu  \cdot \n, \hw - \hw^\pi)_{\Eh} }_{=0} - \underbrace{\tu h^{-1} (\PiE{k-1} (u - \hu), \hw - \hw^\pi)_{\Eh}}_{=0}+ \underbrace{\Ku(\hu,\hw - \hw^\pi)_{\GR}}_{=0}\\ 
		= (\PiE{k-1}(u - \hu),  \tu h^{-1} \PiE{k-1} (w - w^\pi))_{\Eh} 
		-(u - \hu, (\fw - \fw^\pi)\cdot \n)_{\Eh},
	\end{multline*}
	where we could add the projector $\PiE{k-1}$ to the first term in the scalar product on the right hand side, as the second term therein belongs to $\Poly{k-1}(\Eh)$.	Moreover we have that
	\begin{multline*}
		\Au(W - W^\pi,U) = 	
		\underbrace{(\imu (\fw - \fw^\pi) ,\fu)_{\Omega}}_{=0} \underbrace{- 
			(\fw - \fw^\pi ,\nabla u)_{\Th}}_{=0} 
		+(
		(\fw - \fw^\pi)\cdot \n, u)_{\Eh}
		\\	+ \underbrace{\tu h^{-1} (\PiE{k-1}(w - w^\pi - \hw + \hw^\pi),u)_{\Eh} }_{= \tu  h^{-1} (\PiE{k-1}(w - w^\pi),u)_{\Eh}} 
		\underbrace{- (w - w^\pi ,
			\nabla \cdot \fu)_{\Th}
		}_{=0} +  \underbrace{(\hw - \hw^\pi, \fu \cdot \n)_{\Eh}}_{=0} \\
		- {( (\fw - \fw^\pi)  \cdot \n, \hu )_{\Eh} } - \underbrace{\tu h^{-1} (\PiE{k-1}(w - w^\pi - \hw + \hw^\pi), \hu)_{\Eh}}_{= \tu h^{-1} (\PiE{k-1}(w - w^\pi), \hu)} + \underbrace{\Ku(\hw - \hw^\pi,\hu)_{\GR}}_{=0} \\ 
		=(
		(\fw - \fw^\pi)\cdot \n,u - \hu)_{\Eh}  + \tu h^{-1}(\PiE{k-1} (w - w^\pi), \PiE{k-1} (u - \hu))_{\Eh},
	\end{multline*}
	which concludes the proof.
\end{proof}


\bibliographystyle{plain} 
\bibliography{biblio}

\end{document}